\newtheorem{thm}{Theorem}
\newtheorem{lem}{Lemma}[section]
\newtheorem{rem}{Remark}[section]
\newcommand{\dis}{\displaystyle}
\newcommand{\R}{{\Bbb R}}
\newcommand{\N}{{\Bbb N}}
\newcommand{\pa}{\partial}
\title{\Large\sf A type II blowup for the six dimensional energy critical heat equation}
\author{Junichi Harada
\\{\small Faculty of Education and Human Studies, Akita University}
\\[1mm]{\small email: harada-j@math.akita-u.ac.jp}}
\begin{document}
\maketitle
\thispagestyle{empty}

\begin{abstract}
We study blowup solutions of the 6D energy critical heat equation
$u_t=\Delta u+|u|^{p-1}u$ in $\R^n\times(0,T)$.
A goal of this paper is to show the existence of type II blowup solutions
predicted by Filippas, Herrero and Vel\'azquez \cite{FilippasHV}.
The dimension six is a border case whether a type II blowup can occur or not.
Therefore
the behavior of the solution is quite different from other cases.
In fact,
our solution behaves like
 \[
 u(x,t)\approx
 \begin{cases}
 \lambda(t)^{-2}{\sf Q}(\lambda(t)^{-1}x) & \text{in the inner region: }
 |x|\sim\lambda(t),
 \\
 -(p-1)^\frac{1}{p-1}(T-t)^{-\frac{1}{p-1}} & \text{in the selfsimilar region: }
 |x|\sim\sqrt{T-t}
 \end{cases}
 \]
with $\lambda(t)=(1+o(1))(T-t)^\frac{5}{4}|\log(T-t)|^{-\frac{15}{8}}$.
The local energy
$E_\text{loc}(u)
 =\frac{1}{2}\|\nabla u\|_{L^2(|x|<1)}^2-\frac{1}{3}\|u\|_{L^3(|x|<1)}^3$
of the solution goes to $-\infty$.
\end{abstract}

\noindent
 {\bf Keyword}: semilinear heat equation; energy critical; type II blowup;
 matched asymptotic expansion

\section{Introduction}
This paper is concerned with blowup solutions for the semilinear heat equation.
 \begin{equation}\label{1.1}
 \begin{cases}
 u_t = \Delta u+|u|^{p-1}u
 &
 \text{in } \R^n\times(0,T),\\
 u(x,0)=u_0(x)
 &
 \text{on } \R^n.
 \end{cases}
 \end{equation}
It is well known that
for any bounded continuous initial data $u_0$,
\eqref{1.1} admits a unique maximal classical solution $u(x,t)$ in $t\in(0,T)$. 
When the maximal existence time $T$ is finite,
we say that a solution blows up in a finite time $T$,
which is equivalent to $\limsup_{t\to T}\|u(t)\|_\infty=\infty$.
Blowup solutions are classified into two classes.
 \begin{align*}
 \limsup_{t\to T}(T-t)^\frac{1}{p-1}\|u(t)\|_\infty
 &<\infty
 \qquad (\text{type I}),
 \\
 \limsup_{t\to T}(T-t)^\frac{1}{p-1}\|u(t)\|_\infty
 &=\infty
 \qquad (\text{type II}).
 \end{align*}
In this paper,
we are interested in a type II blowup mechanism,
which is much more delicate than a type I case.
In the study of blowup problems,
there are two important critical exponents defined by
 \[
 p_{\text{S}}=\frac{n+2}{n-2},
 \hspace{10mm}
 p_{\text{JL}}=
 \begin{cases}
 \infty & \text{if } n\leq10,
 \\ \dis
 1+\frac{4}{n-4-2\sqrt{n-1}} & \text{if } n\geq11.
 \end{cases}
 \]
A blowup for the case $1<p<p_\text{S}$ is almost completely understood
(\cite{Giga,Giga2,Giga4,Quittner,Filippas,HerreroV,Velazquez,MerleZ}).
Every blowup solution is a type I,
and has the same local profile:
 \[
 u(x,t) \approx (p-1)^{-\frac{1}{p-1}}(T-t)^{-\frac{1}{p-1}}
 \qquad
 \text{near the sigular point}.
 \]
It is known that
for the case $p_S<p<p_{JL}$,
only a type I blowup occurs under a radially symmetric setting (\cite{MatanoM}).
The first type II blowup is discovered by Herrero and Vel\'azquez
\cite{HerreroV2,HerreroV3} (see also \cite{Mizoguchi}) for $p>p_\text{JL}$.
In this pioneering work,
they give a widely applicable approach for the existence of blowup solutions with the prescribed asymptotic profile,
and construct infinitely many positive radially symmetric type II blowup solutions with the exact blowup rates.
Very recently Seki \cite{Seki} proves the existence of a type II blowup
for the critical case $p=p_{\text{JL}}$ (see also \cite{Seki2}).
Another type II blowup is found in the energy critical case $p=p_{\text{S}}$
by Filippas, Herrero and Vel\'azquez \cite{FilippasHV}.
They formally obtain sign changing type II blowup solutions
by using the matched asymptotic expansion technique.
The blowup rate of their solutions are given by
 \begin{align}\label{1.2}
 \|u(t)\|_\infty
 &\sim
 \begin{cases}
 (T-t)^{-k} & n=3,
 \\
 (T-t)^{-k}|\log(T-t)|^\frac{2k}{2k-1} & n=4,
 \\
 (T-t)^{-k} & n=5,
 \\
 (T-t)^{-\frac{1}{4}}|\log(T-t)|^{-\frac{15}{8}} & n=6,
 \end{cases}
 \hspace{7.5mm}
 \|u(t)\|_\infty
 \sim
 \begin{cases}
 (T-t)^{-k} & n=3,
 \\
 (T-t)^{-k}|\log(T-t)|^\frac{2k}{2k-1} & n=4,
 \\
 (T-t)^{-3k} & n=5,
 \\
 (T-t)^{-\frac{5}{2}}|\log(T-t)|^\frac{15}{4} & n=6,
 \end{cases}
 \\ \nonumber
 & \hspace{25mm}
 (\text{original})
 \hspace{67mm}
 (\text{corrected})
 \end{align}
where $k=1,2,3,\cdots$.
The list on the right side in \eqref{1.2} is a corrected version of the original one
(see Appendix of \cite{Harada2} for details).
The first rigorous proof for the existence of a type II blowup
is given by Schweyer \cite{Schweyer}.
He constructs a type II blowup solution for $n=4$
by a different approach from
the matched asymptotic expansion technique \cite{FilippasHV} and \cite{HerreroV2,HerreroV3}.
Its blowup rate coincides with $k=1$ in \eqref{1.2}.
Very recently del Pino, Musso, Wei and their collaborators
succeed to treat blowup problems for the energy critical case
by developing a new method (\cite{Cortazar,delPino, delPino2}).
They obtain a type II blowup solution for $n=5$ with the same blowup rate as $k=1$
in \eqref{1.2}.
After that,
the author \cite{Harada2} proves the existence of a type II blowup corresponding to $n=5$ and $k\geq2$ in \eqref{1.2}
by using their method.
In this paper,
we investigate the possibility of a type II blowup for $n=6$.
This is a border case whether a type II blowup can occur or not.
In fact,
Collot, Merle and Rapha\"el \cite{Collot} prove that
if the initial data is close to the ground states,
then the blowup is a type I for a higher dimensional case $n\geq7$.
We will see that
a type II blowup occurs for $n=6$
and
its asymptotic profile is quite different from other cases.

\section{Main result}
Let ${\sf Q}_\lambda(x)$ be the positive radially symmetric stationary solution given by
 \[
 {\sf Q}_\lambda(x)
 =
 \lambda^{-\frac{n-2}{2}
 }\left( 1+\frac{1}{n(n-2)}\frac{|x|^2}{\lambda^2} \right)^{-\frac{n-2}{2}}
 \qquad
 (\text{ground state}).
 \]
We introduce an ODE type blowup solution.
 \[
 \Theta(x,t)
 =
 (T-t)^{-\frac{1}{p-1}}
 \left( (p-1)+\frac{\alpha}{\tau}e_1(z) \right)^{-\frac{1}{p-1}},
 \qquad z=\frac{x}{\sqrt{T-t}}, \quad \tau=|\log(T-t)|,
 \]
where $\alpha>0$ and $e_1(z)$ is the quadratic polynomial in $|z|$
(see Section \ref{S4.2} for details).
This function coincides with the ODE solution near the singular point.
\[
 \Theta(x,t)
 \approx
 (p-1)^{-\frac{1}{p-1}}(T-t)^{-\frac{1}{p-1}}
 \qquad
 \text{in } |z|< R
\]
for any fixed $R>0$.
Furthermore
we define a cut off function to connect two solutions.
 \[
 \chi_1
 =
 \begin{cases}
 1 & \text{if } |z|<\tau^{-1}, \\
 0 & \text{if } |z|>2\tau^{-1},
 \end{cases}
 \qquad z=\frac{x}{\sqrt{T-t}}, \quad \tau=|\log(T-t)|.
 \]
\begin{thm}\label{Thm1}
 Let $n=6$ and $p=p_{\rm S}$.
 There exist $T>0$ and a radially symmetric solution
 $u(x,t)\in C(\R^6\times[0,T))\cap C^{2,1}(\R^6\times(0,T))$
 of \eqref{1.1}  satisfying the following properties$:$
 \begin{enumerate}[\rm(i)]
 \item The function $u(x,t)$ is given by
 \begin{equation}\label{2.1}
 u(x,t)
 =
 \underbrace{
 {\sf Q}_{\lambda(t)}(x)
 -
 \frac{\frac{5}{4}+\frac{15}{8|\log(T-t)|}}{T-t}T_1(y)\chi_1
 -
 \Theta(x,t)(1-\chi_1)}_{=:u_{\rm app}(x,t)}
 +
 v(x,t),
 \qquad
 y=\frac{x}{\lambda(t)},
 \end{equation}
 where  $T_1(y)$ is a bounded function defined in Section {\rm\ref{S4.1}}.
 \item
 $\lambda(t)=(1+o(1))(T-t)^{\frac{5}{4}}|\log(T-t)|^{-\frac{15}{8}}$.
 \item There exist $c,K>0$ such that
 \[
 |v(x,t)|<
 c
 \begin{cases}
 \left( 1+|z|^2 \right)\tau^{-\frac{3}{2}}(T-t)^{-1}
 & {\rm for}\ |x|<K\sqrt\tau\sqrt{T-t},
 \\[1mm] \dis
 \tau^{-^\frac{7}{16}}(T-t)^{-1}|z|^{-\frac{1}{8}}
 & {\rm for}\ |x|>K\sqrt\tau\sqrt{T-t},
 \end{cases}
 \qquad
 z=\frac{x}{\sqrt{T-t}},
 \quad
 \tau=|\log(T-t)|.
 \]
 \item
 Let $E_{\rm loc}(u)$ be the local energy defined by
 \[
 E_\text{loc}(u)
 =
 \frac{1}{2}\int_{|x|<1}|\nabla u|^2dx
 -
 \frac{1}{p+1}\int_{|x|<1}|u|^{p+1}dx.
 \]
 Then $\dis\lim_{t\to T}E_{\rm loc}(u(t))=-\infty$.
 \end{enumerate}
\end{thm}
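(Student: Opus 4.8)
The plan is to construct $u$ by matched asymptotic expansion together with a parabolic gluing and modulation argument, taking $\lambda(t)$ as the modulation parameter. I would start from the ansatz $u=u_{\rm app}+v$ of \eqref{2.1}, the three pieces of $u_{\rm app}$ being designed to resolve, respectively, the rescaled ground state on the bubble scale $|x|\sim\lambda(t)$, the leading inner correction on the intermediate range $\lambda(t)\ll|x|\ll\sqrt{T-t}$ (the term carrying $T_1$), and the ODE/self similar profile $\Theta$ on the scale $|x|\sim\sqrt{T-t}$, with the cut off $\chi_1$ splicing the last two across $|z|\sim\tau^{-1}$. The constants $\tfrac54$ and $\tfrac{15}{8}$ and the shape of $T_1$ are forced by the requirement that the error
\[
\mathcal E:=\Delta u_{\rm app}+|u_{\rm app}|^{p-1}u_{\rm app}-\partial_t u_{\rm app}
\]
be as small as possible in the weighted norms built into (iii), and in particular have vanishing projection, to leading order, onto the obstructing modes of the linearised flow. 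Inserting the ansatz into \eqref{1.1} produces
\[
\partial_t v=\Delta v+p|u_{\rm app}|^{p-1}v+\mathcal N(v)+\mathcal E,\qquad
\mathcal N(v):=|u_{\rm app}+v|^{p-1}(u_{\rm app}+v)-|u_{\rm app}|^{p-1}u_{\rm app}-p|u_{\rm app}|^{p-1}v ,
\]
and since $n=6$ forces $p=2$, the nonlinear remainder $\mathcal N(v)$ is essentially quadratic in $v$, which helps.

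\noindent\textbf{Linear theory and modulation.} The core is the linear analysis in two coordinate systems. First, in inner variables $y=x/\lambda$ with $ds/dt=\lambda^{-2}$ the generator is the critical operator $\mathcal L_Q=-\Delta_y-pQ^{p-1}$, whose only non decaying radial element is the scaling generator $\Lambda Q:=\tfrac{n-2}{2}Q+y\cdot\nabla_yQ$ (which, for $n=6$, lies in $L^2(\R^6)$). I would spend the freedom in $\lambda(t)$ on the orthogonality condition $\langle v,\Lambda Q\rangle_{L^2}\equiv 0$ (or an equivalent normalisation), which yields the modulation equation for $\lambda$; its solvability is governed by the way the bubble tail matches the outer solution and produces exactly the rate in (ii). Secondly, in self similar variables $z=x/\sqrt{T-t}$, $\tau=|\log(T-t)|$, the linearisation about the constant ODE background is an Ornstein--Uhlenbeck type operator whose radial spectrum has one unstable direction, the constants (tied to the choice of blow up time), and one neutral direction carried by the quadratic polynomial $e_1$; the neutral mode is precisely the one that the term $\tfrac{\alpha}{\tau}e_1(z)$ inside $\Theta$ is built to absorb. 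With the obstructions removed, the dissipative part of $v$ is controlled by a combination of weighted energy inequalities, parabolic smoothing, and maximum principle barriers, which propagate the two regime bound of (iii) from the inner scale outwards; the matching of the inner and outer estimates across $|x|\sim\sqrt{\tau}\,\sqrt{T-t}$ is what produces the fractional weights $\tau^{-7/16}$ and $|z|^{-1/8}$ there.

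\noindent\textbf{Closing the construction and main obstacle.} I would then run a fixed point scheme: the map sending a pair $(v,\lambda)$ to the solution of the linearised problem forced by $\mathcal N(v)+\mathcal E$ together with the updated modulation is shown, for $T$ small enough, to be a contraction (or to admit a fixed point by Schauder) on the set of pairs obeying (ii)--(iii), while the remaining non dissipative mode — the constants in the self similar picture, associated with the blow up time — is removed by the appropriate choice of $T$, equivalently by a one dimensional topological (shooting) argument on the initial data. This yields (i), (ii) and (iii) simultaneously. The main obstacle I expect is the borderline character of $n=6$: unlike $n=4,5$, here the inner correction $T_1$ and the self similar profile $\Theta$ interact at the same order, so $\mathcal E$ is only marginally small and the modulation equation only marginally solvable; keeping the constants $\tfrac54$, $\tfrac{15}{8}$ and the exponent $-\tfrac{15}{8}$ of $|\log(T-t)|$ mutually consistent across the inner expansion, the matching, and the spectral projections is the delicate step, and it is what forces the logarithmic weights throughout the estimates.

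\noindent\textbf{Proof of (iv).} For the local energy it is cleanest to differentiate along the flow. Since the construction keeps $u$ smooth (and $u,\nabla u,\partial_t v$ bounded by a fixed power of $|\log(T-t)|$) on a fixed neighbourhood of $\{|x|=1\}$, the divergence theorem and $u_t=\Delta u+|u|^{p-1}u$ give
\[
\frac{d}{dt}E_{\rm loc}(u(t))=-\int_{|x|<1}u_t^2\,dx+\int_{|x|=1}u_t\,\partial_\nu u\,dS ,
\]
and the boundary integral is of strictly lower order (in fact $O((T-t)^{-1}|\log(T-t)|)$) than the bulk term. On the annular self similar region $\{\,\sqrt{T-t}\le|x|\le c\,\sqrt{\tau}\,\sqrt{T-t}\,\}$, that is $1\le|z|\le c\sqrt{\tau}$, the bubble $Q_{\lambda(t)}$ is negligible and $u\approx-\Theta$, while $|\partial_t\Theta|\gtrsim c\,(T-t)^{-2}$ and, by (iii) and parabolic regularity, $\partial_t v$ is of lower order, so $|u_t|\gtrsim c\,(T-t)^{-2}$ there; since this annulus has Lebesgue measure $\gtrsim c\,\tau^{3}(T-t)^{3}$, we obtain
\[
\int_{|x|<1}u_t^2\,dx\;\gtrsim\;c\,(T-t)^{-1}|\log(T-t)|^{3}.
\]
Hence $\tfrac{d}{dt}E_{\rm loc}(u(t))\le-c\,(T-t)^{-1}|\log(T-t)|^{3}$ for $t$ near $T$, and integrating in time (with the substitution $\sigma=|\log(T-s)|$, $d\sigma=(T-s)^{-1}\,ds$) yields $E_{\rm loc}(u(t))\le-\tfrac{c}{4}|\log(T-t)|^{4}+O(1)\to-\infty$, which proves (iv).
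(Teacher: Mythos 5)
Your outline for (i)--(iii) follows essentially the same architecture as the paper: an approximate solution built by matched asymptotics (with $\tfrac54$, $\tfrac{15}{8}$ forced by matching the bubble tail $-\kappa\lambda^{2}|x|^{-4}$ and the constant $\tfrac45$ in $T_1$ against the small-$|z|$ expansion of $\Theta$), an inner--outer gluing decomposition $v=\lambda^{-2}\epsilon\chi_{\rm in}+w$, a modulation equation for $\lambda$ from orthogonality against $\Lambda_y{\sf Q}$ (the paper imposes it on the inner forcing $G(\lambda,\tilde w)$ rather than on $v$ itself, which is a cosmetic difference), removal of the unstable mode $e_0$ and the neutral mode $e_1$ of the self-similar linearization by adjusting a two-dimensional parameter ${\bf d}$ in the initial data, and a Schauder fixed point; so for those parts your proposal is a correct, if schematic, description of the paper's proof.

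For (iv) you take a genuinely different route. The paper never differentiates the energy: it estimates $\int_{|x|<1}|u|^3$ from below and $\int_{|x|<1}|\nabla_x u|^2$ from above directly, using pointwise barriers $\bar u$, $\bar v$ for $u$ and $\partial_r u$ in $|z|>\tfrac{K}{2}\sqrt\tau$, and concludes $E_{\rm loc}(u)\le E({\sf Q})+C\tau^2\log\tau-c\,\tau^3\log\tau$. Your dissipation identity $\frac{d}{dt}E_{\rm loc}=-\int_{|x|<1}u_t^2\,dx+\int_{|x|=1}u_t\,\partial_\nu u\,dS$, combined with $|u_t|\gtrsim(T-t)^{-2}$ on the annulus $1\le|z|\le c\sqrt\tau$ of measure $\sim\tau^3(T-t)^3$, yields after integration the stronger bound $E_{\rm loc}\lesssim-\tau^4$; this is consistent with the paper, whose region $\Omega_3$ is deliberately truncated at $|z|=\tau^{19/30}$ (over the full ball one indeed has $\int_{|x|<1}\Theta^3\,dx\sim\tau^4$), and your boundary term $O(\tau(T-t)^{-1})$ integrates to the harmless $O(\tau^2)$. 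The one real cost of your route is that the lower bound on $|u_t|$ requires quantitative smallness of $\partial_t v=e^{2\tau}\bigl(\varphi+\partial_\tau\varphi+\tfrac{z}{2}\cdot\nabla_z\varphi\bigr)$ relative to $(T-t)^{-2}$ on the annulus. The spatial pieces follow from the paper's estimates ($|\varphi_2|\lesssim\tau^{-7/4}|z|^{17/8}$, $|\nabla_z\varphi_2|\lesssim\tau^{-27/16}|z|^2$ give $o(1)$ for $|z|\le c\sqrt\tau$), but a bound on $\partial_\tau\varphi$ is nowhere in (i)--(iii) and is not proved in the paper; it must be extracted by an additional interior parabolic (Schauder-in-time) estimate on the $\varphi_2$-equation. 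That step is routine but genuinely extra, whereas the paper's computation closes using only the pointwise bounds on $v$ and $\nabla_x v$ it has already established. Both approaches are viable; yours gives a sharper rate at the price of one more regularity estimate.
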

\begin{rem}
The blowup rate of this solution is given by
 \[
 \|u(t)\|_\infty
 \sim
 \|{\sf Q}_{\lambda(t)}\|_\infty
 \sim
 \lambda(t)^{-\frac{n-2}{2}}
 \sim
 (T-t)^{-\frac{5}{2}}|\log(T-t)|^\frac{15}{4}.
 \]
 This gives the same blowup rate as \eqref{1.2}.
\end{rem}
\begin{rem}
As in \eqref{2.1},
the solution behaves like
 \[
 u(x,t)
 \approx
 \begin{cases}
 {\sf Q}_{\lambda(t)}(x) & |x|\sim\lambda(t),\\
 -\Theta(x,t) & |x|\sim\sqrt{T-t}.
 \end{cases}
 \]
This is a characteristic property of this solution.
As a consequence,
{\rm(iv)} is formally derived from
 \[
 E_\text{loc}(u(t))
 \sim
 E_\text{loc}({\sf Q})+E(\Theta(t)),
 \qquad
 E_\text{loc}({\sf Q})>0,
 \qquad
 \lim_{t\to T}E_\text{loc}(\Theta(t))=-\infty.
 \]
\end{rem}
\begin{rem}
From the blowup criterion $($Theorem {\rm2.1} {\rm\cite{Giga3}}$)$,
the solution blows up only on the origin.
\end{rem}
\begin{rem}
We have no idea whether there exist solutions with a different blowup rate
for $n=6$.
\end{rem}
Our approach is a combination of
the matched asymptotic expansion technique {\rm\cite{FilippasHV}} and
the inner-outer gluing method developed in {\rm\cite{Cortazar,delPino,delPino2}}.
We first construct an approximate solution $u_\text{app}(x,t)$ (see \eqref{2.1})
along \rm\cite{FilippasHV}, and
next prove the existence of a solution close to the approximate solution
by the the inner-outer gluing method.
To do that,
we investigate the behavior of the remainder $v(x,t)$ defined in \eqref{2.1}.
Since
the solution $u(x,t)$ has two characteristic length:
 \begin{enumerate}[\rm(i)]
 \item inner region $|x|\sim\lambda(t)$ \qquad (Section \ref{S6}),
 \item selfsimilar region $|x|\sim\sqrt{T-t}$ \qquad (Section \ref{S7}), 
 \end{enumerate}
we discuss separately.
In the inner region,
we analyze the behavior of the solution near the ground state ${\sf Q}$.
This part is treated in essentially the same way as \cite{delPino}.
On the other hand,
the solution $u(x,t)$ behaves like
 \begin{equation}\label{2.2}
 u(x,t)\approx-(p-1)^{-\frac{1}{p-1}}(T-t)^{-\frac{1}{p-1}}
 \qquad
 \text{in the selfsimilar region}.
 \end{equation}
In other words,
our solution blows up in both directions at the same time $t=T$.
 \[
 \lim_{t\to T}\sup_{|x|\sim\lambda(t)}u(x,t)=\infty,
 \qquad
 \lim_{t\to T}\inf_{|x|\sim\sqrt{T-t}}u(x,t)=-\infty.
 \]
Hence a problem in this paper is to construct a solution
whose selfsimilar part behaves as above.
To obtain such a solution,
we first introduce additional parameters and carefully choose them to control the blowup time of the selfsimilar part of the solution.
After that,
we compute the effect from the inner part and the errors from the approximate procedure,
and confirm that the remainder $v(x,t)$ is sufficiently small.
We note that
since the solution $u(x,t)$ behaves like \eqref{2.2},
our problem is reduced to the ODE type blowup problem with the error terms from the inner part.
We finally compute the local energy of the solution.
This is derived from pointwise estimates of $u(x,t)$ and $\nabla u(x,t)$
(Section \ref{S8}).

\section{Preliminary}
\subsection{Notations}
\label{S3.1}
Throughout this paper,
$\eta(\xi)\in C^\infty(\R)$ stands for a standard cut off function satisfying
\[
 \eta(\xi)=
 \begin{cases}
 1 & \text{if } \xi<1,\\
 0 & \text{if } \xi>2.
 \end{cases}
 \]
The nonlinear function in \eqref{1.1} is denoted by
 \[
 f(u)=|u|^{p-1}u,
 \qquad
 p=\frac{n+2}{n-2}.
 \]

\subsection{Linearization around the ground state}
\label{S3.2}
Let us consider the eigenvalue problem related to the linearized problem around
the ground state ${\sf Q}(y)={\sf Q}_\lambda(y)|_{\lambda=1}$.
 \begin{equation}\label{3.1}
 -H_y\psi=\mu\psi \qquad\text{in } \R^n,
 \end{equation}
where the operator $H_y$ is define by
 \[
 H_y=\Delta_y+V(y),
 \qquad
 V(y)=f'({\sf Q}(y))=p{\sf Q}(y)^{p-1}.
 \]
We recall that the operator $H_y$ has a negative eigenvalue $\mu_1<0$ and a zero eigenvalu (see Proposition 5.5 p. 37 \cite{Duyckaerts}).
We denote by $\psi_1(r)$ a positive radially symmetric eigenfunction
associated to the negative eigenvalue with $\psi_1(0)=1$.
It is know that
there exists $C>0$ such that (see p. 18 \cite{Cortazar})
 \[
 \psi_1(r)
 <
 C\left( 1+r \right)^{-\frac{n-1}{2}}
 e^{-\sqrt{|\mu_1|}\,r}.
 \]
The eigenfunction associated to the zero eigenvalue is explicitly given by
 \[
 \Lambda_y{\sf Q}(y)
 =
 \left( \frac{n-2}{2}+y\cdot\nabla_y \right){\sf Q}(y)
 =
 -
 \left. \frac{\pa}{\pa\lambda}{\sf Q}_\lambda \right|_{\lambda=1}.
 \]

\subsection{Perturbated linearized problem}
\label{S3.3}
We next consider the eigenvalue problem \eqref{3.1} in a bounded but very large domain.
 \begin{equation}\label{3.2}
 \begin{cases}
 -H_y\psi=\mu\psi & \text{in } B_R,
 \\
 \psi=0 & \text{on } \pa B_R,
 \\
 \psi \text{ is radially symmetric}.
 \end{cases}
 \end{equation}
We denote the $i$th eigenvalue of \eqref{3.2} by $\mu_i^{(R)}$
and the associated eigenfunction by $\psi_i^{(R)}$.
We normalize $\psi_i^{(R)}(r)$ as  $\psi_i^{(R)}(0)=1$.
Most of lemmas stated in this subsection are proved in Section 7 \cite{Cortazar}
(see also Section 3.2 \cite{Harada2}).
 \begin{lem}[Lemma 3.1 \cite{Harada2}]
 \label{L3.1}
 There exists $c>0$ independent of $R>0$ such that
 \[
 0
 <
 \psi_1^{(R)}(r)
 <
 c
 \left( 1+r \right)^{-\frac{n-1}{2}}
 e^{-\sqrt{|\mu_1|}\,r}
 \quad\text{\rm for } r\in(0,R).
 \]
 \end{lem}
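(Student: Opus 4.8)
The plan is to establish the pointwise bound on $\psi_1^{(R)}$ by comparing the Dirichlet eigenvalue problem \eqref{3.2} to the whole-space problem \eqref{3.1}, for which the analogous decay estimate on $\psi_1$ is already recorded in Section \ref{S3.2}. The first step is to control the eigenvalue: one shows $\mu_1^{(R)} \to \mu_1 < 0$ as $R \to \infty$, and in particular that $\mu_1^{(R)} \le \mu_1/2 < 0$ for all $R$ large, using the variational characterization of $\mu_1^{(R)}$ together with a trial function obtained by cutting off $\psi_1$. This gives a uniform (in $R$) spectral gap away from zero, which is what will power the exponential decay.

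Next I would use the ODE/comparison structure of the radial equation. Writing the equation for $\psi_1^{(R)}$ as $-(\psi_1^{(R)})'' - \frac{n-1}{r}(\psi_1^{(R)})' - V(r)\psi_1^{(R)} = \mu_1^{(R)}\psi_1^{(R)}$ and recalling $V(r) = p\,{\sf Q}(r)^{p-1} \to 0$ like $r^{-4}$ at infinity, the effective potential $V(r) + \mu_1^{(R)}$ is negative and bounded above by $\mu_1/4$, say, for $r$ beyond some fixed $r_0$ independent of $R$. Positivity of $\psi_1^{(R)}$ on $(0,R)$ (it is a first eigenfunction, hence of one sign) lets one run a comparison argument: the substitution $\psi_1^{(R)}(r) = r^{-\frac{n-1}{2}}w(r)$ converts the operator to a one-dimensional Schrödinger form, and then one compares $w$ to the supersolution $C e^{-\sqrt{|\mu_1|}\,r}$ on $(r_0, R)$ via the maximum principle, choosing $C$ large enough (independently of $R$) to dominate $w$ at $r = r_0$; the Dirichlet condition at $r = R$ only helps, since the comparison function is positive there. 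On the bounded interval $(0, r_0)$ one uses the normalization $\psi_1^{(R)}(0) = 1$ together with a uniform $C^1$ bound — obtained from the equation and elliptic/ODE estimates that are uniform in $R$ because the coefficients are — to get $0 < \psi_1^{(R)}(r) < c(1+r)^{-\frac{n-1}{2}}e^{-\sqrt{|\mu_1|}\,r}$ there as well, adjusting $c$.

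The main obstacle is making every constant genuinely independent of $R$. The eigenvalue convergence $\mu_1^{(R)} \to \mu_1$ is standard but one must be careful that the trial function does not degrade the Rayleigh quotient by more than $o(1)$; and in the comparison step the threshold $r_0$ and the constant $C$ must be pinned down using only the fixed potential $V$ and the uniform lower bound on $|\mu_1^{(R)}|$, not using anything about $R$. Once the uniform spectral gap is in hand, the supersolution comparison is routine. Since this is precisely Lemma 3.1 of \cite{Harada2} and is also contained in Section 7 of \cite{Cortazar}, I would in fact just cite those sources and sketch the comparison argument above for completeness.
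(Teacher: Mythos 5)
The paper itself offers no proof of this lemma: it is imported verbatim from Lemma 3.1 of \cite{Harada2} (see also Section 7 of \cite{Cortazar}), so your closing decision to cite those sources mirrors what the author actually does. The sketch you give as a substitute, however, contains a genuine quantitative gap and, as written, would not produce the stated exponent $\sqrt{|\mu_1|}$.

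The problem is the direction and the size of the eigenvalue perturbation. By domain monotonicity of Dirichlet eigenvalues one has $\mu_1\le\mu_1^{(R)}<0$, hence $|\mu_1^{(R)}|\le|\mu_1|$: the intrinsic decay of $\psi_1^{(R)}$ away from $\partial B_R$ is $e^{-\sqrt{|\mu_1^{(R)}|}\,r}$, which is \emph{slower} than the rate claimed in the lemma. After your substitution $\psi_1^{(R)}=r^{-\frac{n-1}{2}}w$, the equation becomes $-w''+qw=0$ with $q=|\mu_1^{(R)}|-V(r)+\frac{(n-1)(n-3)}{4r^2}$, and $\bar w=Ce^{-\sqrt{|\mu_1|}\,r}$ is a supersolution only where $q\ge|\mu_1|$, i.e.\ only where $\frac{(n-1)(n-3)}{4r^2}-V(r)\ge\mu_1^{(R)}-\mu_1$. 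The left-hand side is $O(r^{-2})$, so the comparison can be run on all of $(r_0,R)$ only if one first proves $\mu_1^{(R)}-\mu_1\lesssim R^{-2}$; the standard argument shows it is in fact exponentially small, because the cut-off trial function $\eta(\cdot/R)\psi_1$ raises the Rayleigh quotient above $\mu_1$ only by terms supported where $\psi_1$ and $\nabla\psi_1$ are already of size $e^{-c R}$. Your proposal establishes only the uniform bound $\mu_1^{(R)}\le\mu_1/2$ and explicitly asks the trial-function computation to lose no more than $o(1)$ — with that information alone the barrier argument yields at best $e^{-\frac{1}{2}\sqrt{|\mu_1|}\,r}$, and the constant in a bound of the form $e^{-\sqrt{|\mu_1^{(R)}|}\,r}$ cannot be converted into one with exponent $\sqrt{|\mu_1|}$ uniformly on $(0,R)$ without a rate for $\mu_1^{(R)}\to\mu_1$. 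The remaining ingredients of your sketch (positivity of the first eigenfunction, Harnack plus the normalization $\psi_1^{(R)}(0)=1$ on a fixed ball $B_{r_0}$, the Dirichlet condition at $r=R$ being harmless for the comparison) are fine, and for the way the lemma is actually used in this paper (e.g.\ in \eqref{6.8} and in the estimate following \eqref{6.21}) any fixed exponential rate would suffice; but as a proof of the lemma as stated, the quantitative convergence of $\mu_1^{(R)}$ to $\mu_1$ is the missing, essential step.
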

 \begin{lem}[Lemma 7.2 \cite{Cortazar}, Lemma 3.3 \cite{Harada2}]
 \label{L3.2}
 Let $n\geq5$.
 There exists $c>0$ independent of $R>0$ such that
 \[
 \mu_2^{(R)}
 >
 cR^{-(n-2)}.
 \]
 \end{lem}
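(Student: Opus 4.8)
Here is how I would attack the lemma.

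\medskip

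The plan is to recast the radial Dirichlet problem \eqref{3.2} as an ODE shooting problem and track the position of the second nodal point as $\mu$ varies. Let $\phi_\mu(r)$ be the solution of $-H_y\phi_\mu=\mu\phi_\mu$ on $(0,\infty)$ that is regular at the origin, normalized by $\phi_\mu(0)=1$. By standard Sturm oscillation theory $\mu_i^{(R)}$ is exactly the $i$-th value of $\mu$ for which $\phi_\mu(R)=0$, and $\phi_{\mu_i^{(R)}}$ has precisely $i-1$ zeros in $(0,R)$. At $\mu=0$ the solution is explicit, $\phi_0=\tfrac{2}{n-2}\Lambda_y{\sf Q}$: it is positive on $(0,\rho_0)$, has a single simple zero at a fixed radius $\rho_0>0$, is strictly negative on $(\rho_0,\infty)$, and $\phi_0(r)=-c_*r^{2-n}(1+o(1))$ as $r\to\infty$ for some $c_*>0$. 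Hence $\mu_2^{(R)}$ is the smallest $\mu>0$ for which $\phi_\mu$ has two zeros in $(0,R]$, and it is enough to show: there is $c>0$ such that for all large $R$ and all $0<\mu\le cR^{-(n-2)}$ the function $\phi_\mu$ has exactly one zero in $(0,R]$ and $\phi_\mu(R)<0$; this forces $\mu_2^{(R)}>cR^{-(n-2)}$.

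To prove the displayed dichotomy I would write $g_\mu:=\phi_\mu-\phi_0$, which solves $-H_yg_\mu=\mu\phi_\mu$ with $g_\mu(0)=g_\mu'(0)=0$, and represent it by variation of parameters against the basis $\{\phi_0,\Psi_0\}$ of solutions of $H_yw=0$, where $\Psi_0\sim r^{2-n}$ is singular at the origin and bounded (in fact tending to a nonzero constant) at infinity:
\[
g_\mu(r)=\frac{\mu}{W_0}\left(\phi_0(r)\int_0^r\Psi_0\,\phi_\mu\,s^{n-1}\,ds-\Psi_0(r)\int_0^r\phi_0\,\phi_\mu\,s^{n-1}\,ds\right),
\]
with $W_0$ the weighted Wronskian constant. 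This is where $n\ge5$ enters decisively: since $\phi_0(s)\sim s^{2-n}$, the weight $\phi_0(s)^2s^{n-1}\sim s^{3-n}$ is integrable on $(0,\infty)$ exactly when $n\ge5$, so $\int_0^r\phi_0\,\phi_\mu\,s^{n-1}\,ds$ stays bounded, and likewise $\phi_0(r)\int_0^r\Psi_0\,\phi_\mu\,s^{n-1}\,ds$ is uniformly bounded; a short bootstrap, valid as long as $\mu R^2$ is small (which holds on the range $\mu\le cR^{-(n-2)}$ because $n\ge5$), then yields $|g_\mu(r)|\le C\mu$ uniformly on $(0,R)$. (For $n=4$ the first integral diverges logarithmically, which matches the extra logarithms appearing for $n=4$ in \eqref{1.2}.)

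Granting this, fix a small $\varepsilon>0$. On $(0,\rho_0-\varepsilon)$ one has $\phi_0\ge\delta>0$, so $\phi_\mu>0$ for $\mu$ small; on $(\rho_0-\varepsilon,\rho_0+\varepsilon)$ the $C^1$-closeness of $\phi_\mu$ to $\phi_0$ together with $\phi_0'(\rho_0)\ne0$ produces exactly one, simple, zero; and on $(\rho_0+\varepsilon,R]$ we have $\phi_\mu(r)\le -c_1r^{2-n}+C\mu$ with $c_1:=\inf_{r\ge\rho_0+\varepsilon}\bigl(-\phi_0(r)r^{\,n-2}\bigr)>0$, so with $c:=c_1/(2C)$ we get $\phi_\mu<0$ throughout $(\rho_0+\varepsilon,R]$ whenever $\mu\le cR^{-(n-2)}$. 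Hence $\phi_\mu$ has exactly one zero in $(0,R]$ and $\phi_\mu(R)<0$, giving $\mu_2^{(R)}>cR^{-(n-2)}$ for $R$ large; for $R$ in a fixed compact range $\mu_2^{(R)}$ is continuous and positive, so $c$ can be adjusted (the estimate being understood for $R$ bounded below).

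The main obstacle is the uniform-in-$R$ control of $g_\mu$: the crude bootstrap estimate for the term $\Psi_0(r)\int_0^r\phi_0\,g_\mu\,s^{n-1}\,ds$ loses a power $r^2$, so one must exploit the decay $\phi_0\sim r^{2-n}$ (that is, $n\ge5$) to see that the relevant integrals converge rather than grow, and then verify that the resulting validity window $\mu R^2\ll1$ comfortably contains the scale $\mu\sim R^{-(n-2)}$. Everything else — the Sturm characterization of $\mu_2^{(R)}$, the explicit structure of $\phi_0=\tfrac{2}{n-2}\Lambda_y{\sf Q}$, and the behavior of $\phi_\mu$ near the fixed zero $\rho_0$ — is routine. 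An essentially equivalent alternative is a compactness argument: if $\mu_2^{(R_k)}R_k^{\,n-2}\to0$ along some $R_k\to\infty$, rescale $\psi_2^{(R_k)}$, extract a limit that is forced to be a multiple of $\Lambda_y{\sf Q}$, and derive a contradiction from $\psi_2^{(R_k)}(R_k)=0$ and $\Lambda_y{\sf Q}(R)\sim c\,R^{2-n}$; the direct shooting argument is, however, easier to make quantitative.
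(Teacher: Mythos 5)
The paper does not actually prove this lemma --- it is imported verbatim from Lemma 7.2 of \cite{Cortazar} and Lemma 3.3 of \cite{Harada2} (where the argument is a compactness/ODE argument built on the explicit kernel element $\Lambda_y{\sf Q}$), so there is no in-paper proof to compare against. Your shooting argument is a legitimate and essentially correct self-contained alternative: the Sturm characterization of $\mu_2^{(R)}$ via the second zero of the regular solution $\phi_\mu$, the identification $\phi_0=\tfrac{2}{n-2}\Lambda_y{\sf Q}$ with its unique simple zero at $\rho_0=\sqrt{n(n-2)}$ and the decay $\phi_0\sim -c_*r^{2-n}$, and the observation that a uniform $O(\mu)$ perturbation destroys negativity of $\phi_\mu$ on $(\rho_0+\varepsilon,R]$ precisely when $\mu\gtrsim R^{-(n-2)}$, together give the stated bound. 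The one step you should not wave at is the uniform control of $g_\mu$: as written, the claim that $\phi_0(r)\int_0^r\Psi_0\phi_\mu s^{n-1}ds$ is ``uniformly bounded'' is false if one only uses boundedness of $\phi_\mu$ (it grows like $r^2$); you must split $\phi_\mu=\phi_0+g_\mu$, use the integrability of $\phi_0\Psi_0 s^{n-1}$ and of $\phi_0^2s^{n-1}$ (the latter is exactly where $n\ge5$ enters), and accept a loss $O(A\mu^2r^2)$ from the $g_\mu$ contribution, which closes the bootstrap only on the window $\mu R^2\ll1$ --- you do acknowledge this in your last paragraph, and the window indeed contains $\mu\sim R^{-(n-2)}$ for $n\ge5$. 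Two further small points: you need $C^1$ (not just $C^0$) closeness of $\phi_\mu$ to $\phi_0$ near $\rho_0$ to get exactly one simple zero there, which follows by differentiating the variation-of-parameters formula on the fixed compact interval; and your parenthetical that the estimate must be understood for $R$ bounded below is correct and necessary, since for $R\to0$ one has $\mu_2^{(R)}\sim R^{-2}\ll R^{-(n-2)}$ when $n\ge5$, so the lemma as literally stated cannot hold for all $R>0$ (in the paper $R$ is always taken large, cf.\ Section \ref{S6.1}).
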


\subsection{Behavior of the Laplace equation with a perturbation term}
\label{S3.4}
Consider a radially symmetric solution of
 \[
 \Delta_yp+(1-\chi_M)V(y)p=0 \quad\text{in } \R^n,
 \qquad
 \chi_M=\eta\left( \frac{|y|}{M} \right).
 \]
 Let $p_M(r)$ be a radially symmetric solution of this problem satisfying
 $p_M(r)=1$ for $r<M$.
 \begin{lem}[see proof of Lemma 7.3 \cite{Cortazar}, Lemma 3.3 \cite{Harada2}]
 \label{L3.3}
 There exist $k\in(0,1)$ and $M_1>0$ such that if $M>M_1$
 \[
 k<p_M(r)\leq1
 \quad\text{\rm for}\ r\in(0,\infty).
 \]
 \end{lem}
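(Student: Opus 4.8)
The plan is to pass to the radial variable and treat the problem as a linear second-order ODE. In radial form the equation reads $p_M'' + \frac{n-1}{r}p_M' + (1-\chi_M(r))V(r)p_M = 0$ with $1-\chi_M(r) = 1-\eta(r/M)$, and since $p_M\equiv 1$ on $[0,M)$ while the coefficient $(1-\chi_M)V$ vanishes there, any classical solution on $(0,\infty)$ must carry the Cauchy data $p_M(M)=1$, $p_M'(M)=0$ at $r=M$; global solvability on $[M,\infty)$ is then immediate from linearity and boundedness of the coefficient. Two elementary facts drive the argument: first, at $p=p_{\rm S}$ one has $V(y)=p\,{\sf Q}(y)^{p-1}=p\bigl(1+\tfrac{|y|^2}{n(n-2)}\bigr)^{-2}$, hence $0\le V(y)\le C(1+|y|)^{-4}$ with $C=C(n)$; second, $0\le 1-\chi_M\le 1$ and $1-\chi_M$ is supported in $\{|y|>M\}$.

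I would first establish the upper bound together with monotonicity. Writing the equation as $(r^{n-1}p_M')' = -r^{n-1}(1-\chi_M)V\,p_M$, on any interval $[M,\rho]$ on which $p_M>0$ the right-hand side is $\le 0$, so $r^{n-1}p_M'$ is nonincreasing; since it vanishes at $r=M$, we get $p_M'\le 0$, hence $p_M$ is nonincreasing and $p_M\le 1$ on $[M,\rho]$. Integrating $(r^{n-1}p_M')'=-r^{n-1}(1-\chi_M)V p_M$ twice from $r=M$ and applying Fubini gives, on such an interval,
\[
1-p_M(r)
=
\int_M^r s^{n-1}(1-\chi_M(s))V(s)p_M(s)\Bigl(\int_s^r t^{1-n}\,dt\Bigr)ds .
\]
Using $0\le p_M\le 1$, the bound $\int_s^r t^{1-n}\,dt\le \tfrac{1}{n-2}s^{2-n}$, and $V(s)\le Cs^{-4}$, this yields
\[
0\le 1-p_M(r)\le \frac{1}{n-2}\int_M^\infty s\,(1-\chi_M(s))V(s)\,ds
\le \frac{C}{n-2}\int_M^\infty s^{-3}\,ds = \frac{C}{2(n-2)}\,M^{-2}.
\]

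To conclude I would run a continuity (bootstrap) argument. Fix $M_1$ with $\frac{C}{2(n-2)}M_1^{-2}<\tfrac12$, take $M>M_1$, and set $\rho_*=\sup\{\rho\ge M : p_M>\tfrac12 \text{ on } [M,\rho]\}$. On $[M,\rho_*)$ the two previous steps apply, so $p_M\ge 1-\frac{C}{2(n-2)}M^{-2}>\tfrac12$ there, and by continuity at $\rho_*$ this forces $\rho_*=\infty$. Hence $\tfrac12<p_M\le 1$ on $[M,\infty)$, and combined with $p_M\equiv 1$ on $[0,M)$ the lemma follows with, e.g., $k=\tfrac12$ (any $k<1-\frac{C}{2(n-2)}M_1^{-2}$ works).

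The one delicate point is the interplay exploited in the last step: the bound $p_M\le1$ relies on $p_M>0$, while the quantitative lower bound relies on $p_M\le1$ inside the integral, so the two cannot be proved in isolation; the continuity argument on $\rho_*$ is precisely what decouples them. Everything else reduces to the observation that, because the perturbation potential decays like $|y|^{-4}$ at the critical exponent, its doubly weighted mass $\int_{|y|>M}|y|\,V(y)\,d|y|$ is $O(M^{-2})$ and hence negligible once $M$ is large.
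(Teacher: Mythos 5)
Your proof is correct: the reduction to Cauchy data $p_M(M)=1$, $p_M'(M)=0$, the monotonicity of $r^{n-1}p_M'$ giving $p_M\le 1$, the Fubini identity for $1-p_M$, the $O(M^{-2})$ tail bound from $V=O(|y|^{-4})$, and the continuity argument that decouples the upper and lower bounds are all sound. The paper itself gives no proof and simply cites Lemma 7.3 of \cite{Cortazar} and Lemma 3.3 of \cite{Harada2}, where essentially this same ODE integral-equation and smallness-of-tail argument is used, so your proposal matches the intended route.
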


\subsection{The Schauder estimate for parabolic equations}
\label{S3.5}
In this subsection,
we consider
 \begin{equation}\label{3.3}
 u_t=\Delta_xu+{\bf b}(x,t)\cdot\nabla_xu+V(x,t)u+f(x,t)
 \qquad\text{in } Q,
 \end{equation}
where $Q=B_2\times(0,1)$, $B_r=\{x\in\R^n;\ |x|<r\}$.
The coefficients are assumed to be
 \begin{equation}
 \tag{A1}
 {\bf b}(x,t)\in(L^\infty(Q))^n,
 \quad
 V(x,t)\in L^\infty(Q)
 \qquad\text{with }
 \|{\bf b}\|_{L^\infty(Q)}+\|V\|_{L^\infty(Q)}<M.
 \end{equation}
For $p,q\in[1,\infty]$,
we define
 \[
 \|f\|_{L^{p,q}(Q)}
 =
 \begin{cases}
 \dis
 \left( \int_0^1\|f(t)\|_{L^p(B_2)}^qdt \right)^\frac{1}{q}
 & \text{if } q\in[1,\infty),
 \\
 \dis
 \sup_{t\in(0,1)}\|f(t)\|_{L^p(B_2)}
 & \text{if } q=\infty.
 \end{cases}
 \]

 \begin{lem}[Exercise 6.5 p. 154 \cite{Lieberman}, Theorem 8.1 p. 192 \cite{Ladyzenskaja}]
 \label{L3.4}
 Let $p,q\in(1,\infty]$ satisfy $\frac{n}{2p}+\frac{1}{q}<1$ and assume {\rm(A1)}.
 There exists $c>0$ depending on $p$, $q$, $n$ and $M$ such that
 \begin{enumerate}[{\rm (i)}]
 \item
 if $u(x,t)$ is a weak solution of \eqref{3.3},
 then
 \[
 \|u\|_{L^\infty(B_1\times(\frac{1}{2},1))}
 <
 c\left( \|u\|_{L^2(Q)}+\|f\|_{L^{p,q}(Q)} \right),
 \]
 \item
 if $u(x,t)\in C(B_2\times[0,1))$ is a weak solution of \eqref{3.3} with $u(x,t)|_{t=0}=0$,
 then
 \[
 \|u\|_{L^\infty(B_1\times(0,1))}
 <
 c\left( \|u\|_{L^2(Q)}+\|f\|_{L^{p,q}(Q)} \right).
 \]
 \end{enumerate}
 \end{lem}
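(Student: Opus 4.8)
The estimate is a parabolic De Giorgi–Nash–Moser bound, and the plan is to prove it by Moser's iteration, treating the drift $\mathbf b\cdot\nabla u$ and the potential $Vu$ as subcritical perturbations (by assumption (A1)) and the source $f$ as a subcritical inhomogeneity (thanks to $\frac{n}{2p}+\frac1q<1$). First I would reduce to an interior a priori bound: for a decreasing family of parabolic cylinders interpolating between $Q=B_2\times(0,1)$ and the target cylinder (namely $B_1\times(\tfrac12,1)$ in (i), $B_1\times(0,1)$ in (ii)), it suffices to show
\[
\|u\|_{L^\infty(Q_\rho)}\le C(\rho'-\rho)^{-\gamma}\bigl(\|u\|_{L^2(Q_{\rho'})}+\|f\|_{L^{p,q}(Q_{\rho'})}\bigr)
\]
for concentric $Q_\rho\subset Q_{\rho'}$ and a fixed exponent $\gamma=\gamma(n)$; the lemma then follows by a standard finite covering and by density, since a weak solution of \eqref{3.3} lies in $L^2_{\mathrm{loc}}((0,1);H^1)\cap C_{\mathrm{loc}}((0,1);L^2)$, which is exactly the regularity needed to legitimize the test-function computation below (and in case (ii) it lies in $C([0,1);L^2)$ with zero trace at $t=0$).

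The core is the Caccioppoli estimate for powers: testing \eqref{3.3} against $\zeta^2|u|^{2\beta-2}u$ for $\beta\ge1$ and a smooth space--time cutoff $\zeta$, and writing $w=|u|^\beta$, one obtains
\[
\sup_{t}\int\zeta^2w^2\,dx+\iint\zeta^2|\nabla w|^2\,dx\,dt\le C\beta^2\iint\bigl(|\nabla\zeta|^2+|\zeta\zeta_t|+M^2\zeta^2\bigr)w^2+C\beta\iint\zeta^2|f|\,|u|^{2\beta-1},
\]
where the first–order drift has been absorbed into the gradient term by Cauchy--Schwarz using $\|\mathbf b\|_\infty\le M$. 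Feeding this into the parabolic Sobolev inequality $\|w\zeta\|_{L^{2\kappa}(Q)}^2\le C\bigl(\sup_t\|w\zeta\|_{L^2}^2+\|\nabla(w\zeta)\|_{L^2(Q)}^2\bigr)$ with the parabolic gain $\kappa=1+\tfrac2n$ upgrades the integrability of $u$ from $L^{2\beta}$ on $Q_{\rho'}$ to $L^{2\beta\kappa}$ on $Q_\rho$, at the cost of a factor $C\beta^2(\rho'-\rho)^{-\gamma}$ and an additive term controlled by $\|f\|_{L^{p,q}(Q_{\rho'})}$. Iterating with $\beta_j=\kappa^j$ and radii $\rho_j$ shrinking geometrically to the target, the accumulated constants $\prod_jC\beta_j^2(\rho_j-\rho_{j+1})^{-\gamma}$ converge because $\sum_jj\kappa^{-j}<\infty$, and in the limit one reaches the $L^\infty$ bound. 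For (i) I would take a time cutoff vanishing at the lower endpoint, so the boundary term at the bottom slice drops and no information at an initial time is used; for (ii) I would instead keep $\zeta(\cdot,0)\ne0$, the term $\int\zeta^2w^2|_{t=0}$ then vanishing because $u|_{t=0}=0$, so the estimate extends down to $t=0$.

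The delicate point — and the one I expect to be the main obstacle — is controlling the source term $\iint\zeta^2|f|\,|u|^{2\beta-1}$ uniformly through the iteration. Hölder in space--time bounds it by $\|f\|_{L^{p,q}(Q)}\,\bigl\||u|^{2\beta-1}\zeta^2\bigr\|_{L^{p',q'}(Q)}$, and one then interpolates the second factor parabolically between $\sup_t\|w\zeta\|_{L^2_x}$ and $\|w\zeta\|_{L^2_tL^{2^*}_x}$; this interpolation is admissible precisely under $\frac{n}{2p}+\frac1q<1$, which in fact forces $p>\tfrac n2$ so that the spatial interpolation exponent equals $\frac n{2p}\in(0,1)$, and the strict inequality provides a fixed margin ensuring the time-exponent condition holds at every stage $\beta_j$. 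After this, the $w$–factors enter with total power $2-1/\beta_j<2$, so a Young's–inequality bookkeeping (as in the references \cite{Lieberman,Ladyzenskaja}) absorbs them into the left-hand side while keeping the additive contribution uniformly bounded by $\|f\|_{L^{p,q}(Q)}$. Verifying this uniform-in-$\beta$ absorption, together with the convergence of the product of iteration constants, is the only genuinely technical part; the statement is otherwise exactly the content of the cited results of Lieberman and of Ladyzhenskaja--Solonnikov--Ural'tseva.
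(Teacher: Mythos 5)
The paper gives no proof of Lemma \ref{L3.4}: it is quoted directly from the cited references (Exercise 6.5 of Lieberman and Theorem 8.1 of Ladyzhenskaja--Solonnikov--Ural'tseva), so your argument is being compared with the standard proofs there rather than with anything in the paper. Your Moser-iteration sketch --- Caccioppoli estimates for the powers $|u|^{\beta}$, the parabolic Sobolev embedding with gain $\kappa=1+\frac{2}{n}$, absorption of the source under $\frac{n}{2p}+\frac{1}{q}<1$, and treating (ii) by keeping the cutoff nonzero at $t=0$ and using $u|_{t=0}=0$ --- is exactly the classical route those references take, and it is correct modulo the routine truncation (or inductive integrability) argument needed to justify testing with $\zeta^{2}|u|^{2\beta-2}u$ before boundedness is known.
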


 \begin{lem}[Theorem 4.8 p. 56 \cite{Lieberman}, Theorem 11.1 p. 211 \cite{Ladyzenskaja}]
 \label{L3.5}
 Let $p,q\in(1,\infty]$ satisfy $\frac{n}{p}+\frac{2}{q}<1$ and assume {\rm(A1)}.
 There exists $c>0$ depending on $p$, $q$, $n$ and $M$ such that
 \begin{enumerate}[{\rm (i)}]
 \item
 if $u(x,t)\in C^{2,1}(Q)$ is a solution of \eqref{3.3},
 then
 \[
 \|\nabla u\|_{L^\infty(B_1\times(\frac{1}{2},1))}
 <
 c\left( \|u\|_{L^\infty(Q)}+\|f\|_{L^{p,q}(Q)} \right),
 \]
 \item
 if $u(x,t)\in C^{2,1}(Q)\cap C^{2,1}(B_2\times[0,1))$ is a solution of \eqref{3.3}
 with $u(x,t)|_{t=0}=0$,
 then
 \[
 \|\nabla u\|_{L^\infty(B_1\times(0,1))}
 <
 c\left( \|u\|_{L^\infty(Q)}+\|f\|_{L^{p,q}(Q)} \right).
 \]
 \item
 if $u(x,t)\in C^{2,1}(\bar Q)$ is a solution of \eqref{3.3} with
 $u(x,t)|_{t=0}=0$ and $u(x,t)|_{\pa B_2}=0$,
 then
 \[
 \|\nabla u\|_{L^\infty(Q)}
 <
 c\left(
 \|u\|_{L^\infty(Q)}+\|f\|_{L^{p,q}(Q)}
 \right).
 \]
 \end{enumerate}
 \end{lem}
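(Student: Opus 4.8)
The plan is to reduce \eqref{3.3} to the model heat equation by a localization, to represent the localized solution through the Gaussian (or Dirichlet) heat kernel, and to read off the gradient bound from the sharp space--time integrability of $\nabla G$; the hypothesis $\frac np+\frac2q<1$ will appear precisely as the borderline for an elementary one--dimensional singular integral in the time variable, and the only genuine work is to absorb the first order term $\mathbf b\cdot\nabla u$ on the right hand side. First I would fix a smooth cutoff $\zeta(x,t)$ with $\zeta\equiv1$ on the cylinder on which the conclusion is asserted and $\mathrm{supp}\,\zeta\subset Q$, arranging in case (i) that $\zeta$ also vanishes for $t$ near $0$, keeping $\zeta\equiv1$ down to $t=0$ in case (ii) (and using $u|_{t=0}=0$), and not cutting off in $x$ at all in case (iii) (using $u|_{\partial B_2}=u|_{t=0}=0$ and the Dirichlet heat semigroup of $B_2$). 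Then $w:=\zeta u$ solves
\[
 w_t-\Delta w
 =\zeta f+\zeta Vu+\zeta\,\mathbf b\cdot\nabla u+(\zeta_t-\Delta\zeta)u-2\nabla\zeta\cdot\nabla u=:F ,
\]
with $w\to0$ as $t\to0^+$, so Duhamel gives $w(t)=\int_0^t e^{(t-s)\Delta}F(s)\,ds$, the semigroup being the free one in (i)--(ii) and the Dirichlet one of $B_2$ in (iii).

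Next I would use the scaling identity $\|\nabla G(\cdot,\sigma)\|_{L^{p'}(\R^n)}=c\,\sigma^{-\frac12-\frac n{2p}}$ with $\tfrac1p+\tfrac1{p'}=1$ (and the fact that, since $B_2$ is smooth, its Dirichlet heat kernel obeys the same Gaussian gradient bound, uniformly up to $\partial B_2$, for $\sigma\le1$). Hölder in $x$ gives $\|\nabla w(t)\|_{L^\infty_x}\le c\int_0^t(t-s)^{-\frac12-\frac n{2p}}\|F(s)\|_{L^p_x}\,ds$, and Hölder in $t$ with exponents $q,q'$ then yields
\[
 \|\nabla w\|_{L^\infty(Q)}
 \le c\Big(\int_0^1\sigma^{-q'(\frac12+\frac n{2p})}\,d\sigma\Big)^{1/q'}\|F\|_{L^{p,q}(Q)} .
\]
The $\sigma$--integral is finite exactly when $q'\big(\tfrac12+\tfrac n{2p}\big)<1$, i.e. $\tfrac n{2p}+\tfrac1q<\tfrac12$, i.e. $\tfrac np+\tfrac2q<1$, which is the standing hypothesis; the degenerate cases $p=\infty$ (then $p'=1$, exponent $-\tfrac12$, need $q>2$) and $q=\infty$ (then $q'=1$, need $n/p<1$) are included.

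It remains to bound $\|F\|_{L^{p,q}(Q)}$. The benign terms obey $\|\zeta f\|_{L^{p,q}}\le\|f\|_{L^{p,q}(Q)}$ and $\|\zeta Vu+(\zeta_t-\Delta\zeta)u\|_{L^{p,q}}\le c(M+1)\|u\|_{L^\infty(Q)}$, so what I really need is $\|\nabla u\|_{L^{p,q}(Q)}\le c(\|u\|_{L^\infty(Q)}+\|f\|_{L^{p,q}(Q)})$. I would obtain this by a finite bootstrap on a chain of shrinking cylinders between $Q$ and the target cylinder: a cut--off energy estimate (multiplying \eqref{3.3} by $u\zeta^2$ and integrating, the boundary term at the final time being favourable) gives $\nabla u\in L^2$ on a slightly smaller cylinder; the interior mixed--norm parabolic Calder\'on--Zygmund estimates of Ladyzhenskaja and Lieberman upgrade $u$ to $W^{2,1}_{r,s}$; the parabolic Sobolev embedding raises the integrability of $\nabla u$; and after finitely many steps (the gain per step being strictly positive until one reaches the exponents $p,q$, whose admissibility for the final embedding $W^{2,1}_{p,q}\hookrightarrow C^{1,0}$ is exactly $\tfrac np+\tfrac2q<1$) one reaches $\nabla u\in L^{p,q}$ on the next cylinder with the claimed bound. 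Combining this with the two previous steps proves (i); keeping $\zeta\equiv1$ at $t=0$ together with $u|_{t=0}=0$ removes the initial contribution and gives (ii); and for (iii) one works directly with the Dirichlet heat semigroup of $B_2$, so that $u|_{\partial B_2}=u|_{t=0}=0$ makes the representation $u(t)=\int_0^t e^{(t-s)\Delta_{B_2}^{\mathrm{Dir}}}g(s)\,ds$ exact with no boundary or initial term, and the same kernel estimate (valid up to $\partial B_2$) closes the argument.

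The hard part will be the circularity in the last step: the estimate for $\nabla u$ feeds on $\nabla u$ itself through $\mathbf b\cdot\nabla u$, which is what forces the nested--cylinder bootstrap (or, equivalently, an appeal to sharp mixed--norm parabolic regularity). Once that is in place the rest is routine; the only remaining points requiring care are matching the heat--kernel time integrability precisely to the hypothesis $\tfrac np+\tfrac2q<1$, and verifying that the smoothness of $B_2$ preserves the Gaussian gradient bound for part (iii).
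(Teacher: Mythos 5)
The paper does not actually prove Lemma \ref{L3.5}: it is quoted directly from Theorem 4.8 of \cite{Lieberman} and Theorem 11.1 of \cite{Ladyzenskaja}, so your argument is by construction a different, self-contained route. Your sketch is essentially sound: the scaling identity $\|\nabla G(\cdot,\sigma)\|_{L^{p'}}=c\,\sigma^{-\frac12-\frac{n}{2p}}$, the H\"older step in time, and the fact that its convergence is exactly $\frac np+\frac 2q<1$ (including the endpoint cases $p=\infty$, $q=\infty$) are all correct, as are the three cutoff/Duhamel reductions and the use of the Gaussian gradient bound for the Dirichlet kernel of the smooth ball in case (iii). The one step that needs more care is the one you flag, the circular term ${\bf b}\cdot\nabla u$: because $f$ lies only in $L^{p,q}$, a bootstrap based on \emph{same-exponent} parabolic Calder\'on--Zygmund theory caps the integrability of $\nabla u$ at a level that need not be admissible. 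For instance with $n=6$, $p$ very large and $q$ slightly above $2$ one only has $f\in L^{q}(Q)$, and the ladder stops at $\nabla u\in L^{r}$ with $\frac1r=\frac1q-\frac1{n+2}$, for which $\frac{n+2}{r}>1$, so the time integral in your Duhamel step diverges. Hence you genuinely need mixed-norm $W^{2,1}_{p,q}$ estimates when $p\neq q$; these exist (Krylov-type results), but attributing them to the cited chapters of \cite{Ladyzenskaja} and \cite{Lieberman} is loose, and once you invoke them the embedding $W^{2,1}_{p,q}\hookrightarrow C^{1,0}$ under $\frac np+\frac2q<1$ already yields (i)--(iii) directly, making your heat-kernel layer redundant. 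Two ways to keep your scheme honest and self-contained: absorb ${\bf b}\cdot\nabla u$ inside the $W^{2,1}_{p,q}$ estimate by interpolation on nested cylinders ($\|\nabla u\|_{L^{p,q}}\le\varepsilon\|D^2u\|_{L^{p,q}}+C_\varepsilon\|u\|_{L^{p,q}}$) instead of the energy/Sobolev ladder, or iterate the Duhamel representation itself with Young's inequality to climb the mixed-norm scale for $\nabla u$, which avoids Calder\'on--Zygmund theory altogether.
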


\subsection{Local behavior of the heat equation}
\label{S3.6}
To describe a local behavior of solutions to the heat equation,
we often consider
 \[
 \theta_\tau=A_z\theta
 \qquad\text{in } \R^n\times(0,\infty),
 \qquad\text{where }
 A_z=\Delta_z-\frac{z}{2}\cdot\nabla_z.
 \]
We introduce a wighted $L^2$ space.
 \begin{align*}
 L_\rho^2(\R^n)
 :=
 \{f\in L_\text{loc}^2(\R^n);\ \|f\|_\rho<\infty\},
 \qquad
 \|f\|_\rho^2=\int_{\R^n}f(z)^2\rho(z)dz,
 \qquad
 \rho(z)=e^{-\frac{|z|^2}{4}}.
 \end{align*}
The inner product is defined by
 \[
 (f_1,f_2)_\rho=\int_{\R^n}f_1(z)f_2(z)\rho(z)dz.
 \]
The corresponding eigenvalue problem in a radially symmetric setting is given by
 \[
 -A_ze_i=\lambda_ie_i
 \qquad\text{in } L_{\rho,\text{rad}}^2(\R^n).
 \]
It is known that
 \begin{itemize}
 \item $\lambda_i=i$ \ ($i=0,1,2,\cdots$) and
 \item $e_i(z)\in L_{\rho,\text{rad}}^2(\R^n)$ is the $2i$th-degree polynomial.
 \end{itemize}
We normalize the eigenfunction $e_i(z)$ as
 \[
 e_i(z)
 =
 {\sf c}_i|z|^{2k}
 +
 \sum_{l=0}^{i-1}
 {\sf c}_i^{(l)}|z|^{2l}
 \qquad\text{with }
 {\sf c}_i>0 \text{ and } \|e_i\|_\rho=1.
 \]
The first three eigenfunctions $e_0(z)$, $e_1(z)$ and $e_2(z)$ are explicitly written as
 \begin{equation}\label{3.4}
 \begin{array}{l}
 e_0(z)
 =
 {\sf c}_0,
 \\[1mm]
 e_1(z)
 =
 {\sf c}_1\left( |z|^2-2n \right),
 \\[1mm]
 e_2(z)
 =
 {\sf c}_2\left( |z|^4-(4n+8)|z|^2+(4n^2+8n) \right).
 \end{array}
 \end{equation}
 From this relation,
 we see that
 \begin{align}\label{3.5}
 (e_1^2,e_1)_\rho
 &=
 {\sf c}_1^2((|z|^2-2n)^2,e_1)_\rho
 \nonumber
 \\
 &=
 {\sf c}_1^2(|z|^4-4n|z|^2+4n^2,e_1)_\rho
 \nonumber
 \\
 &=
 {\sf c}_1^2(|z|^4-(4n+8)|z|^2+(4n^2+8n),e_1)_\rho
 +
 {\sf c}_1^2(8|z|^2-8n,e_1)_\rho
 \nonumber
 \\
 &=
 \frac{{\sf c}_1^2}{{\sf c}_2}(e_2,e_1)_\rho
 +
 {\sf c}_1^2(8|z|^2-16n,e_1)_\rho
 +
 {\sf c}_1^2(16n,e_1)_\rho
 \nonumber
 \\
 &=
 8{\sf c}_1.
 \end{align}
 In the same manner,
 we get
 \begin{equation}\label{3.6}
 (|\nabla_ze_1|,e_1)_\rho={\sf c}_1^2(4|z|^2,e_1)_\rho=4{\sf c}_1.
 \end{equation}
We finally recall a fundamental inequality (see (6) p. 4218 \cite{Harada}):
 \begin{equation}\label{3.7}
 \||z|f\|_\rho<c(\|f\|_\rho+\|\nabla_zf\|_\rho).
 \end{equation}

\subsection{The linearization around the ground state}
\label{S3.7}
We here provide a radially symmetric solution of the linearized problem around the ground state.
 \begin{equation}\label{3.8}
 H_yT=g,
 \end{equation}
where $H_y=\Delta_y+V(y)$ and $V(y)=f'({\sf Q}(y))$.
We recall that
 \begin{align*}
 \Lambda_y{\sf Q}
 =
 \left( \frac{n-2}{2}+y\cdot\nabla_y \right){\sf Q}
 =
 \frac{n-2}{2}\left( 1-\frac{|y|^2}{n(n-2)} \right)
 \left( 1+\frac{|y|^2}{n(n-2)} \right)^{-\frac{n}{2}}
 \end{align*}
gives a radially symmetric solution of $H_yT=0$.
Let $\Gamma$ be another radially symmetric solution of $H_yT=0$ given by
 \begin{align*}
 \Gamma(r)
 &=
 (\Lambda_y{\sf Q})
 \int_R^r
 \frac{dr}{(\Lambda_y{\sf Q})^2r^{n-1}}
 \qquad \text{for } r>R,
 \end{align*}
where $R>0$ is a large constant such that $\Lambda_y{\sf Q}(r)<0$ for $r>R$.
For simplicity we write $\Lambda_y{\sf Q}$ as
 \[
 \Lambda_y{\sf Q}
 =
 -\underbrace{\frac{\{n(n-2)\}^\frac{n}{2}}{2n}}_{=:\kappa}
 r^{-(n-2)}+O(r^{-n}).
 \]
The asymptotic behavior of $\Gamma(r)$ is given by
 \begin{align}\label{3.9}
 \Gamma(r)
 &=
 (\Lambda_y{\sf Q})
 \int_R^r
 \frac{dr}{(\Lambda_y{\sf Q})^2r^{n-1}}
 \nonumber
 \\
 &=
 -\kappa^{-1}
 (r^{-(n-2)}+O(r^{-n}))
 \int_R^r
 \left( r^{n-3}+O(r^{n-5}) \right)dr
 \nonumber
 \\
 &=
 \frac{-\kappa^{-1}}{n-2}
 +
 \begin{cases}
 O(r^{-1}) & (n=3),
 \\
 O(r^{-2}\log r) & (n=4),
 \\
 O(r^{-2}) & (n\geq5).
 \end{cases}
 \end{align}
In the same manner,
the derivative of $\Gamma(r)$ is computed as
 \begin{align}\label{3.10}
 \pa_r\Gamma(r)
 &=
 (\Lambda_y{\sf Q})_r
 \int_R^r
 \frac{dr}{(\Lambda_y{\sf Q})^2r^{n-1}}
 +
 \frac{1}{(\Lambda_y{\sf Q})r^{n-1}}
 \nonumber
 \\
 &=
 \frac{1}{(\Lambda_y{\sf Q})r^{n-1}}
 \left(
 (\Lambda_y{\sf Q})(\Lambda_y{\sf Q})_rr^{n-1}
 \int_R^r
 \frac{dr}{(\Lambda_y{\sf Q})^2r^{n-1}}
 +1
 \right)
 \nonumber
 \\
 &=
 \frac{1}{(\Lambda_y{\sf Q})r^{n-1}}
 \left(
 \left( -(n-2)r^{-(n-2)}+O(r^{-n}) \right)
 \int_R^r\left( r^{n-3}+O(r^{n-5}) \right)dr
 +1
 \right)
 \nonumber
 \\
 &=
 \begin{cases}
 O(r^{-2}) & (n=3),
 \\
 O(r^{-3}\log r) & (n=4),
 \\
 O(r^{-3}) & (n\geq5).
 \end{cases}
 \end{align}
 From these relations,
 it holds that
 \begin{align*}
 (\Gamma_r(\Lambda_y{\sf Q})-\Gamma(\Lambda_y{\sf Q})_r)r^{n-1}
 &=
 \text{const}
 \\
 &=
 \lim_{r\to\infty}
 (\Gamma_r(\Lambda_y{\sf Q})-\Gamma(\Lambda_y{\sf Q})_r)r^{n-1}
 \\
 &=
 0-\frac{-\kappa^{-1}}{n-2}\cdot(n-2)\kappa
 =1.
 \end{align*}
 From this identity,
 we easily see that
 \begin{equation}\label{3.11}
 T
 =
 -\Gamma
 \int_0^r
 (\Lambda_y{\sf Q})g
 r^{n-1}dr
 +
 (\Lambda_y{\sf Q})
 \int_0^r
 \Gamma gr^{n-1}dr
 \end{equation}
 gives a solution of \eqref{3.8}.

\section{Formal derivation of blowup speed}
\label{S4}
We here repeat the argument in \cite{FilippasHV} p. 2970 - p. 2972
to construct an approximate solution, and derive its blowup rate.

\subsection{Inner region}
\label{S4.1}
We look for a solution of the form:
 \[
 u(x,t)\approx \lambda(t)^{-\frac{n-2}{2}}{\sf Q}(y), \qquad y=\frac{x}{\lambda(t)}
 \]
near the singular point.
The unknown function $\lambda(t)$ represents a characteristic length of
the inner region: $|x|\sim\lambda(t)$.
We change variables.
\[
 u(x,t)
 =
 \lambda^{-\frac{n-2}{2}}({\sf Q}(y)+\epsilon(y,t)),
 \qquad y=\frac{x}{\lambda(t)}.
\]
The function $\epsilon(y,t)$ solves
\[
 \lambda^2\epsilon_t
 =
 H_y\epsilon
 +
 \sigma\Lambda_y\epsilon
 +
 \sigma\Lambda_y{\sf Q}
 +
 N,
 \qquad\sigma=\lambda\dot\lambda,
\]
where $H_y=\Delta_y+f'({\sf Q}(y))$ and
\[
 \Lambda_y=\frac{n-2}{2}+y\cdot\nabla_y,
 \qquad
 N=f({\sf Q}+\epsilon)-f({\sf Q})-f'({\sf Q})\epsilon.
\]
To obtain more precious asymptotic behavior of the solution,
we rewrite
\[
 u(x,t)
 =
 \lambda^{-\frac{n-2}{2}}({\sf Q}(y)+\sigma(t)T_1(y)+\epsilon_1(y,t)),
 \qquad
 \sigma=\lambda\dot\lambda.
\]
The function $\epsilon_1(y,t)$ solves
\[
 \lambda^2\pa_t\epsilon_1
 +
 \lambda^2\dot\sigma T_1
 =
 H_y\epsilon_1
 +
 \sigma(\Lambda_y{\sf Q}+H_yT_1)
 +
 \sigma^2\Lambda_yT_1
 +
 \sigma\Lambda_y\epsilon_1
 +
 N.
\]
We choose $T_1(y)$ as a solution of $H_yT_1+\Lambda_y{\sf Q}=0$.
From this choice,
we can expect $|\epsilon_1|\ll\sigma$.
Therefore
the solution should behave as
 \begin{equation}\label{4.1}
 u(x,t)\approx
 \lambda^{-\frac{n-2}{2}}{\sf Q}(y)
 +
 \lambda^{-\frac{n-2}{2}}\sigma T_1(y),
 \qquad\sigma=\lambda\dot\lambda
 \end{equation}
in the inner region.
We here compute $T_1(y)$ for a later argument.
 From \eqref{3.11},
 $T_1(y)$ is written by
 \[
 T_1
 =
 \underbrace{
 -\Gamma
 \int_0^r
 (\Lambda_y{\sf Q})^2
 r^{n-1}dr}_{=:T_1'}
 +
 \underbrace{
 (\Lambda_y{\sf Q})
 \int_0^r
 \Gamma(\Lambda_y{\sf Q})r^{n-1}dr}_{=:T_1''},
 \qquad
 r=|y|.
 \]
When $n\geq5$,
 $T_1'$ and $T_1''$ are given by
 \begin{align*}
 T_1'
 &=
 -\Gamma\int_0^r
 (\Lambda_y{\sf Q})^2
 r^{n-1}dr
 \\
 &=
 -\Gamma\int_0^\infty
 (\Lambda_y{\sf Q})^2
 r^{n-1}dr
 +
 \Gamma\int_r^\infty
 (\Lambda_y{\sf Q})^2
 r^{n-1}dr
 \\
 &=
 -\Gamma\int_0^\infty
 (\Lambda_y{\sf Q})^2
 r^{n-1}dr
 +
 O(r^{-(n-4)}),
 \\
 T_1''
 &=
 O(r^{-(n-4)}).
 \end{align*}
 The integral in $T_1'$ is computed as
 \begin{align*}
 \int_0^\infty
 (\Lambda_y{\sf Q})^2
 r^{n-1}dr
 &=
 \left( \frac{n-2}{2} \right)^2
 \int_0^\infty
 \left( 1-\frac{r^2}{n(n-2)} \right)^2
 \left( 1+\frac{r^2}{n(n-2)} \right)^{-n}r^{n-1}dr
 \\
 &=
 \left( \frac{n-2}{2} \right)^2
 \{n(n-2)\}^\frac{n}{2}
 \int_0^\infty
 \frac{(1-t^2)^2t^{n-1}}{(1+t^2)^n}dt
 \\
 &=
 \left( \frac{n-2}{2} \right)^2
 \frac{\{n(n-2)\}^\frac{n}{2}}{2}
 \int_0^\infty \frac{(1-s)^2s^\frac{n-2}{2}}{(1+s)^n}ds.
 \end{align*}
When $n=6$,
the integrand is written as
 \begin{align*}
 \frac{(1-s)^2s^2}{(1+s)^6}
 &=
 \frac{(s+1-2)^2(s+1-1)^2}{(1+s)^6}
 \\
 &=
 \frac{(s+1)^4-6(s+1)^3+13(s+1)^2-12(s+1)+4}{(1+s)^6}
 \\
 &=
 \frac{1}{(s+1)^2}
 +
 \frac{-6}{(s+1)^3}
 +
 \frac{13}{(s+1)^4}
 +
 \frac{-12}{(s+1)^5}
 +
 \frac{4}{(s+1)^6}.
 \end{align*}
 Therefore
 we get
 \begin{align*}
 \int_0^\infty
 (\Lambda_y{\sf Q})^2
 r^{n-1}dr
 =
 -\left(
 -1+3-\frac{13}{3}+3-\frac{4}{5}
 \right)
 =
 \frac{2}{15}
 \qquad (n=6).
 \end{align*}
This relation and \eqref{3.9} give
 \begin{align*}
 T_1'
 &=
 -\Gamma
 \int_0^\infty
 (\Lambda_y{\sf Q})^2
 r^{n-1}dr
 +
 O(r^{-(n-4)})
 \\
 &=
 \frac{1}{n-2}\frac{2n}{\{n(n-2)\}^\frac{n}{2}}
 \left( \frac{n-2}{2} \right)^2
 \frac{\{n(n-2)\}^\frac{n}{2}}{2}
 \frac{2}{15}
 +
 O(r^{-2})
 +
 O(r^{-(n-4)})
 \\
 &=
 \frac{4}{5}+O(r^{-2})
 \qquad (n=6).
 \end{align*}
We finally obtain
 \begin{equation}\label{4.2}
 T_1(r)=\frac{4}{5}+O(r^{-2}),
 \qquad r=|y|
 \qquad (n=6).
 \end{equation}
Furthermore
since $\pa_r\Gamma=O(r^{-3})$ (see \eqref{3.10}),
it holds that
 \begin{equation}\label{4.3}
 \pa_rT_1(r)=O(r^{-3}),
 \qquad r=|y|
 \qquad (n=6).
 \end{equation}

\subsection{Selfsimiliar region}
\label{S4.2}
 We next investigate the asymptotic behavior of the solution in the selfsimilar region.
 In this region,
 we use the selfsimilar variables.
 \[
 z=\frac{x}{\sqrt{T-t}},
 \qquad
 T-t=e^{-\tau}.
 \]
 We assume that the solution behaves like
 \begin{equation}\label{4.4}
 u(x,t)
 \approx
 -\Theta(x,t)
 \qquad\text{in the selfsimilar region},
 \end{equation}
where $\Theta(x,t)$ is an explicit function given by
 \begin{align}\label{4.5}
 \Theta(x,t)
 =
 (T-t)^{-\frac{1}{p-1}}
 \left( (p-1)+\frac{\alpha}{\tau}e_1(z) \right)^{-\frac{1}{p-1}}
 \end{align}
with
 \begin{equation}\label{4.6}
 \alpha
 =
 \frac{2(p-1)^2}{p}\frac{1}{(e_1^2,e_1)_\rho}>0.
 \end{equation}
The constant $(e_1^2,e_1)_\rho$ is given by \eqref{3.5}.
This function $\Theta(x,t)$ is an approximation of an ODE blowup solution,
which is introduced in \cite{HerreroV, Velazquez}.
For a later matching procedure,
we compute the asymptotic behavior of $\Theta(x,t)$ as $|z|\to0$.
From \eqref{3.4} - \eqref{3.5} and \eqref{4.6},
it holds that
 \begin{align}\label{4.7}
 \Theta(x,t)
 &=
 \{(p-1)(T-t)\}^{-\frac{1}{p-1}}
 \left( 1+\frac{\alpha}{p-1}\frac{{\sf c}_1(|z|^2-2n)}{\tau} \right)^{-\frac{1}{p-1}}
 \nonumber
 \\
 &=
 \{(p-1)(T-t)\}^{-\frac{1}{p-1}}
 \left(
 1-\frac{\alpha}{(p-1)^2}\frac{{\sf c}_1(|z|^2-2n)}{\tau}
 +
 O\left(\frac{|z|^4+1}{\tau^2}\right)
 \right)
 \nonumber
 \\
 &=
 \{(p-1)(T-t)\}^{-\frac{1}{p-1}}
 \left(
 1-\frac{2}{p}\frac{1}{8{\sf c}_1}
 \frac{-2n{\sf c}_1}{\tau}
 +
 O\left(\frac{|z|^2}{\tau}\right)
 +
 O\left(\frac{|z|^4+1}{\tau^2}\right)
 \right)
 \nonumber
 \\
 &\approx
 \{(p-1)(T-t)\}^{-\frac{1}{p-1}}
 \left(
 1+\frac{n}{2p\tau}
 \right)
 \qquad
 \text{as } |z|\to0.
 \end{align}

\subsection{Matching condition}
In Section \ref{S4.1} - Section \ref{S4.2},
we derive the asymptotic form of the solution in the inner region (see \eqref{4.1})
and the selfsimilar region (see \eqref{4.4}) respectively.
We here provide a condition such that
those two solutions coincide at the boundary of the two regions.
The formulas \eqref{4.1} and \eqref{4.4} give the following matching condition.
 \[
 \lambda^{-\frac{n-2}{2}}{\sf Q}(y)
 +
 \lambda^{-\frac{n-2}{2}}\sigma T_1(y)
 \approx
 -\Theta(x,t)
 \qquad
 \text{as } |y|\to\infty \ \text{ and } \ |z|\to0.
 \]
From \eqref{4.2} and \eqref{4.7},
this condition is rewritten as
 \[
 \frac{\{n(n-2)\}^\frac{n}{2}}{\lambda^\frac{n-2}{2}}
 \frac{1}{|y|^{n-2}}
 +
 \frac{4}{5}
 \frac{\sigma}{\lambda^\frac{n-2}{2}}
 \approx
 -
 \{(p-1)(T-t)\}^{-\frac{1}{p-1}}
 \left(
 1+\frac{n}{2p\tau}
 \right)
 \qquad
 (n=6)
 \]
as $|y|\to\infty$ and $|z|\to0$.
We here assume that
the first term on the left-hand side is negligible since $|y|\to\infty$.
Then
when $n=6$,
the matching condition is given by
\[
 \frac{\dot\lambda}{\lambda}
 =
 -
 \frac{5}{4}
 \left(
 1+\frac{3}{2\tau}
 \right)
 (T-t)^{-1}.
\]
From this equation,
we obtain
 \begin{equation}\label{4.8}
 \lambda
 =
 (T-t)^\frac{5}{4}\tau^{-\frac{15}{8}},
 \qquad
 \tau=|\log(T-t)|.
 \end{equation}

\section{Formulation}
\label{S5}
In this section,
we set up our problem as in the proof of Theorem 1 \cite{delPino}
(see also Section 5 \cite{Harada2}).

\subsection{Setting}
\label{S5.1}
As in the previous section,
we look for solutions of the form:
 \begin{equation}\label{5.1}
 u(x,t)
 =
 \lambda^{-\frac{n-2}{2}}{\sf Q}(y)
 +
 \lambda_0^{-\frac{n-2}{2}}\sigma T_1(y)\chi_1
 -
 \Theta(x,t)\chi_2
 +
 v(x,t),
 \qquad
 y=\frac{x}{\lambda},
 \end{equation}
where $\lambda=\lambda(t)$ is an unknown function,
$\chi_i$ ($i=1,2$) is a cut off function defined by
 \[
 \chi_1
 =
 \eta\left( \tau|z| \right),
 \qquad
 \chi_2=1-\chi_1,
 \qquad
 z=\frac{x}{\sqrt{T-t}},
 \qquad
 \tau=|\log(T-t)|
 \]
and $\lambda_0$, $\sigma$ are functions given by (see \eqref{4.8})
 \begin{align}\label{5.2}
 \lambda_0
 =
 (T-t)^\frac{5}{4}\tau^{-\frac{15}{8}},
 \hspace{10mm}
 \sigma
 =
 -\left( \frac{5}{4}+\frac{15}{8\tau} \right)
 (T-t)^\frac{3}{2}\tau^{-\frac{15}{4}}.
 \end{align}
We remark that
the second term $\lambda_0^{-\frac{n-2}{2}}\sigma T_1$ in \eqref{5.1}
is slightly different from that of \eqref{4.1},
where $\lambda^{-\frac{n-2}{2}}\sigma T_1$ is used.
This gives a better estimate in our argument (see Lemma \ref{L7.2}).
 To derive an equation for $v(x,t)$,
 we compute the derivative of $\Theta(x,t)$ (see \eqref{4.5}).
 \begin{align*}
 \pa_t\Theta
 &=
 \frac{1}{p-1}
 (T-t)^{-\frac{p}{p-1}}
 \left( (p-1)+\frac{\alpha}{\tau}e_1 \right)^{-\frac{1}{p-1}}
 \\
 &\qquad
 -
 \frac{1}{p-1}
 (T-t)^{-\frac{p}{p-1}}
 \left( (p-1)+\frac{\alpha}{\tau}e_1 \right)^{-\frac{p}{p-1}}
 \left(
 -\frac{\alpha}{\tau^2}e_1
 +
 \frac{\alpha}{\tau}
 \frac{z}{2}\cdot\nabla_ze_1
 \right),
 \\
 \Delta_x\Theta
 &=
 \frac{-1}{p-1}
 (T-t)^{-\frac{p}{p-1}}
 \left( (p-1)+\frac{\alpha}{\tau}e_1 \right)^{-\frac{p}{p-1}}
 \frac{\alpha}{\tau}
 \Delta_ze_1
 \\
 &\qquad
 +
 \frac{p}{(p-1)^2}
 (T-t)^{-\frac{p}{p-1}}
 \left( (p-1)+\frac{\alpha}{\tau}e_1 \right)^{-\frac{2p-1}{p-1}}
 \frac{\alpha^2}{\tau^2}
 |\nabla_ze_1|^2.
 \end{align*}
 This gives
 \begin{align}\label{5.3}
 \pa_t\Theta-\Delta_x\Theta-\Theta^p
 &=
 \underbrace{
 \frac{\alpha}{p-1}
 (T-t)^{-\frac{p}{p-1}}
 \left( (p-1)+\frac{\alpha}{\tau}e_1 \right)^{-\frac{p}{p-1}}
 \frac{e_1}{\tau^2}
 }
 \\
 &\qquad
 \underbrace{
 -\frac{p\alpha^2}{(p-1)^2}
 (T-t)^{-\frac{p}{p-1}}
 \left( (p-1)+\frac{\alpha}{\tau}e_1 \right)^{-\frac{2p-1}{p-1}}
 \frac{|\nabla_ze_1|^2}{\tau^2}
 }_{=:\mu(x,t)}.
 \nonumber
 \end{align}
We compute the derivative of $u(x,t)$.
\begin{align*}
 u_t
 &=
 -\frac{\dot\lambda}{\lambda^\frac{n}{2}}\Lambda_y{\sf Q}
 -
 \Theta_t\chi_2
 +
 v_t
 +
 g_0+g_1,
 \\
 \Delta_xu
 &=
 -
 \frac{f({\sf Q})}{\lambda^\frac{n+2}{2}}
 -
 \frac{\sigma\Lambda_y{\sf Q}}{\lambda_0^\frac{n-2}{2}\lambda^2}\chi_1
 -
 \frac{\sigma VT_1}{\lambda_0^\frac{n-2}{2}\lambda^2}\chi_1
 -
 (\Delta_x\Theta)\chi_2
 +
 \Delta_xv
 +
 g_2
 +
 g_3,
\end{align*}
where $g_i$ ($i=0,1,2,3$) is given by
\begin{align*}
 g_0
 &=
 -\frac{\sigma}{\lambda_0^\frac{n-2}{2}}
 \left( \frac{n-2}{2}\frac{\dot\lambda_0}{\lambda_0}
 +
 \frac{\dot\lambda}{\lambda}y\cdot\nabla_y
 \right)
 T_1\cdot\chi_1
 +
 \frac{\dot\sigma T_1}{\lambda_0^\frac{n-2}{2}}\chi_1
 +
 \frac{\sigma T_1}{\lambda_0^\frac{n-2}{2}}\pa_t\chi_1,
 \\
 g_1
 &=
 -\Theta\pa_t\chi_2,
 \\
 g_2
 &=
 \frac{2\sigma}{\lambda_0^\frac{n-2}{2}\lambda}\nabla_yT_1\cdot\nabla_x\chi_1
 +
 \frac{\sigma T_1}{\lambda_0^\frac{n-2}{2}}\Delta_x\chi_1,
 \\
 g_3
 &=
 -2\nabla_x\Theta\cdot\nabla_x\chi_2
 -
 \Theta\Delta_x\chi_2.
\end{align*}
The remainder $v(x,t)$ satisfies
 \begin{align}\label{5.4}
 v_t
 &=
 \Delta_xv
 +
 \frac{V}{\lambda^2}(v-\Theta\chi_2)
 +
 \left(
 \frac{\dot\lambda}{\lambda^\frac{n}{2}}
 -
 \frac{\sigma}{\lambda_0^\frac{n-2}{2}\lambda^2}
 \right)
 (\Lambda_y{\sf Q})\chi_1
 +
 \frac{\dot\lambda}{\lambda^\frac{n}{2}}
 (\Lambda_y{\sf Q})\chi_2
 \\
 &\qquad
 +
 \mu
 +
 \underbrace{
 (-\mu\chi_1)
 +
 (-g_0)+(-g_1)+g_2+g_3+g_4}_{=:g}
 +
 N,
 \nonumber
 \end{align}
where
 \begin{align*}
 N
 &=
 f(\lambda^{-\frac{n-2}{2}}{\sf Q}+\lambda_0^{-\frac{n-2}{2}}\sigma T_1\chi_1
 -\Theta\chi_2+v)
 -
 f(\lambda^{-\frac{n-2}{2}}{\sf Q})
 \\
 &\qquad
 -
 f'(\lambda^{-\frac{n-2}{2}}{\sf Q})
 (\lambda_0^{-\frac{n-2}{2}}\sigma T_1\chi_1-\Theta\chi_2+v)
 -
 f(-\Theta\chi_2),
 \\
 g_4
 &=
 -f(-\Theta)\chi_2
 +
 f(-\Theta\chi_2).
 \end{align*}
We decompose $v(x,t)$ as
 \[
 v(x,t)
 =
 \lambda^{-\frac{n-2}{2}}\epsilon(y,t)\chi_\text{in}
 +
 w(x,t),
 \qquad
 y=\frac{x}{\lambda}.
 \]
The function $\epsilon(y,t)$ is defined on $(y,t)\in B_{2R}\times(0,T)$ and
 \[
 \chi_\text{in}=\chi\left( \frac{|y|}{R} \right).
 \]
The constant $R$ will be chosen large enough in Section \ref{S6} - Section \ref{S7}.
Plugging this into \eqref{5.4},
we get
 \begin{align}\label{5.5}
 \frac{\epsilon_t}{\lambda^{\frac{n-2}{2}}}
 \chi_\text{in}
 +
 w_t
 &=
 \frac{H_y\epsilon}{\lambda^{\frac{n+2}{2}}}
 \chi_\text{in}
 +
 \Delta w
 +
 \frac{V}{\lambda^2}(w-\Theta\chi_2)
 +
 \left(
 \frac{\dot\lambda}{\lambda^\frac{n}{2}}
 -
 \frac{\sigma}{\lambda_0^\frac{n-2}{2}\lambda^2}
 \right)
 (\Lambda_y{\sf Q})\chi_1
 \\
 &\qquad
 +
 \frac{\dot\lambda}{\lambda^\frac{n}{2}}
 (\Lambda_y{\sf Q})\chi_2
 +
 \frac{\dot\lambda}{\lambda^\frac{n}{2}}
 (\Lambda_y\epsilon)\chi_\text{in}
 +
 h_\text{in}
 +
 \mu
 +
 g
 +
 N,
 \nonumber
 \end{align}
 where
 \[
 h_\text{in}
 =
 \frac{1}{\lambda^\frac{n+2}{2}}
 \left(
 2\nabla_y\epsilon\cdot\nabla_y\chi_\text{in}
 +
 \epsilon\Delta_y\chi_\text{in}
 -
 \lambda^2\epsilon\pa_t\chi_\text{in}
 \right).
 \]
We introduce a parabolic system of $(\epsilon(y,t),w(x,t))$.
 \begin{equation}\label{5.6}
 \begin{cases}
 \lambda^2\epsilon_t
 =
 H_y\epsilon+G(\lambda,w)
 &
 \text{in } B_{2R}\times(0,T),
 \\
 \dis
 w_t
 =
 \Delta_xw
 +
 (1-\chi_\text{in})\frac{G(\lambda,w)}{\lambda^\frac{n+2}{2}}
 -
 \frac{V\Theta}{\lambda^2}\chi_2
 +
 \frac{\dot\lambda}{\lambda^\frac{n}{2}}
 (\Lambda_y{\sf Q})\chi_2
 +
 \frac{\dot\lambda}{\lambda^\frac{n}{2}}
 (\Lambda_y\epsilon)\chi_\text{in}
 \\ \dis
 \hspace{10mm}
 +
 h_\text{in}
 +
 \mu
 +
 g
 +
 N
 & \text{in } \R^n\times(0,T),
 \end{cases}
 \end{equation}
where
 \begin{align}\label{5.7}
 G(\lambda,w)
 &=
 \left(
 \lambda\dot\lambda
 -
 \left( \frac{\lambda}{\lambda_0} \right)^\frac{n-2}{2}
 \sigma
 \right)
 (\Lambda_y{\sf Q})
 \chi_1
 +
 \lambda^\frac{n-2}{2}Vw(x,t).
 \end{align}
We rewrite the second equation of \eqref{5.6} in a different expression.
 \begin{align}\label{5.8}
 w_t
 &=
 \Delta_xw
 +
 f'(-\Theta\chi_2)w
 +
 (1-\chi_\text{in})\frac{G(\lambda,w)}{\lambda^\frac{n+2}{2}}
 -
 \frac{V\Theta}{\lambda^2}\chi_2
 +
 \frac{\dot\lambda}{\lambda^\frac{n}{2}}
 (\Lambda_y{\sf Q})\chi_2
 \\
 &\qquad
 +
 \frac{\dot\lambda}{\lambda^\frac{n}{2}}
 (\Lambda_y\epsilon)\chi_\text{in}
 +
 h_\text{in}
 +
 \mu
 +
 g_\text{out}
 +
 N_\text{out}
 \qquad
 \text{in } \R^n\times(0,T),
 \nonumber
 \end{align}
where
 \begin{align*}
 N_\text{out}
 &=
 f\left(
 \lambda^{-\frac{n-2}{2}}{\sf Q}+\lambda_0^{-\frac{n-2}{2}}\sigma T_1\chi_1
 -\Theta\chi_2+\lambda^{-\frac{n-2}{2}}\epsilon\chi_\text{in}+w
 \right)
 -
 f(-\Theta\chi_2)
 \\
 &\qquad
 -
 f'(-\Theta\chi_2)
 \left( \lambda^{-\frac{n-2}{2}}{\sf Q}+\lambda_0^{-\frac{n-2}{2}}\sigma T_1\chi_1
 +\lambda^{-\frac{n-2}{2}}\epsilon\chi_\text{in}+w \right)
 \\
 &\qquad
 -
 f(\lambda^{-\frac{n-2}{2}}{\sf Q})
 -
 f'(\lambda^{-\frac{n-2}{2}}{\sf Q})
 \left( \lambda_0^{-\frac{n-2}{2}}\sigma T_1\chi_1-\Theta\chi_2
 +\lambda^{-\frac{n-2}{2}}\epsilon\chi_\text{in}+w \right),
 \\
 g_\text{out}
 &=
 g+g_5,
 \\
 g_5
 &=
 f'(-\Theta\chi_2)
 \left(
 \lambda^{-\frac{n-2}{2}}{\sf Q}+\lambda_0^{-\frac{n-2}{2}}\sigma T_1\chi_1
 +
 \lambda^{-\frac{n-2}{2}}\epsilon\chi_\text{in}
 \right).
 \end{align*}
We can verify that
$(\epsilon(y,t),w(x,t))$ solves \eqref{5.5}
if a solution $(\epsilon(y,t),w(x,t))$ of \eqref{5.6} is obtained.
This equation is the same one introduced in \cite{delPino}.
By a lack of boundary condition in the equation for $\epsilon(y,t)$ in \eqref{5.6},
the problem may not be uniquely solvable.
So we appropriately construct a solution $\epsilon(y,t)$
such that $\epsilon(y,t)$ decays enough in the region $|y|\sim R$
(see \eqref{6.23}).

\subsection{Fixed point argument}
\label{S5.2}
To construct a solution of \eqref{5.6},
we apply a fixed point argument.
We put
 \begin{align}\label{5.9}
 {\cal W}(x,t)
 &=
 \begin{cases}
 \dis
 \frac{e^\tau}{\tau^\frac{3}{2}}
 \left( 1+|z|^2 \right)
 & \text{for } |z|<K\sqrt{\tau},
 \\[3mm]
 \dis
 \frac{K^\frac{17}{8}e^{\tau}}{\tau^\frac{7}{16}}
 \frac{1}{|z|^\frac{1}{8}}
 & \text{for } |z|> K\sqrt{\tau}.
 \end{cases}
 \end{align}
The constant $K>0$ is chosen to be large enough later.
Let $\delta\in(0,T)$ and
$X_\delta$ be the space of all continuous functions on $\R^n\times[0,T-\delta]$ satisfying
 \[
 |w(x,t)|
 \leq
 {\cal W}(x,t)
 \quad
 \text{for }
 t\in[0,T-\delta].
 \]
We extend $w(x,t)$ to a continuous function on $\R^n\times[0,T]$.
 \[
 \bar{w}(x,t)=
 \begin{cases}
 w(x,t)
 & \text{if } (x,t)\in\R^n\times(0,T-\delta],
 \\
 w(x,T-\delta)\chi_\delta(t)
 & \text{if } (x,t)\in\R^n\times(T-\delta,T).
 \end{cases}
 \]
The function $\chi_\delta(t)$ is a cut off function satisfying
 \[
 \chi_\delta(t)
 =
 \begin{cases}
 1 & \text{if } t<T-\delta, \\
 0 & \text{if } t>T-\frac{\delta}{2}.
 \end{cases}
 \]
Furthermore we define
 \[
 \tilde{w}(x,t)
 =
 \begin{cases}
 \bar{w}(x,t) & \text{if } t\in[0,T-\delta],
 \\
 {\cal W}(x,t) & \text{if } t\in[T-\delta,T-\frac{\delta}{2}] \text{\ \ and\ \ }
 \bar{w}(x,t)>{\cal W}(x,t),
 \\
 \bar{w}(x,t) & \text{if } t\in[T-\delta,T-\frac{\delta}{2}] \text{\ \ and\  \ }
 |\bar{w}(x,t)|\leq{\cal W}(x,t),
 \\
 -{\cal W}(x,t) & \text{if } t\in[T-\delta,T-\frac{\delta}{2}] \text{\ \ and\ \ }
 \bar{w}(x,t)<-{\cal W}(x,t),
 \\
 0 & \text{if } t\in[T-\frac{\delta}{2},T].
 \end{cases}
 \]
From this definition,
we see that $\tilde{w}(x,t)\in C(\R^n\times[0,T])$ and
 \begin{equation}\label{5.10}
 |\tilde{w}(x,t)|
 \leq
 \begin{cases}
 {\cal W}(x,t) & \text{for } (x,t)\in\R^n\times(0,T-\frac{\delta}{2}),\\
 0 & \text{for } (x,t)\in\R^n\times(T-\frac{\delta}{2},T).
 \end{cases}
 \end{equation}
From this construction,
the mapping
 \[
 (w,\|\cdot\|_{C(\R^n\times[0,T-\delta])})
 \quad\mapsto\quad
 (\tilde w,\|\cdot\|_{C(\R^n\times[0,T])})
 \]
is continuous.
For given $w(x,t)\in X_\delta$,
we first determine $\lambda(t)$ by the orthogonal condition \eqref{6.1}.
Next
we construct $\epsilon(y,t)$ as a solution of (see \eqref{5.6})
 \[
 \lambda^2\epsilon_t=H_y\epsilon+G(\lambda,\tilde w)
 \qquad\text{in } B_{2R}\times(0,T).
 \]
After that
we solve the problem (see \eqref{5.8})
 \begin{align*}
 W_t
 &=
 \Delta_xW
 +
 f'(-\Theta\chi_2)W
 +
 (1-\chi_\text{in})
 \frac{G(\lambda,\tilde w)}{\lambda^\frac{n+2}{2}}
 -
 \frac{V\Theta}{\lambda^2}\chi_2
 +
 \frac{\dot\lambda}{\lambda^\frac{n}{2}}
 (\Lambda_y{\sf Q})\chi_2
 \\
 &\qquad
 +
 \frac{\dot\lambda}{\lambda^\frac{n}{2}}
 (\Lambda_y\epsilon)\chi_\text{in}
 +
 h_\text{in}
 +
 \mu
 +
 g_\text{out}
 +
 N_\text{out}
 \qquad
 \text{in } \R^n\times(0,T).
 \nonumber
 \end{align*}
By using this $W(x,t)$,
we define the mapping $w(x,t)\mapsto W(x,t)$.
The fixed point of this mapping gives a solution of \eqref{5.6}
in $\R^n\times(0,T-\delta)$ satisfying \eqref{5.10}.
Finally
we take $\delta\to0$ to obtain the solution stated in Theorem \ref{Thm1}.
For the rest of paper,
we investigate the solution mapping described in the above procedure.

\section{Inner solution}
\label{S6}
In this section,
we repeat the argument in Lemma 4.1 \cite{delPino} to define the mapping
$w(x,t)\in X_\delta\mapsto\epsilon(y,t)$ mentioned in Section \ref{S5.2}.
Throughout this section,
$\tilde{w}(x,t)\in C(\R^6\times[0,T])$
represents an extension of $w(x,t)$ defined in Section \ref{S5.2}.

\subsection{Notation 2}
\label{S6.1}
For the rest of this paper,
we assume that
 \begin{itemize}
 \item the constant $R>0$ is large enough and
 \item the blowup time $T$ is given by $T=e^{-R}$.
 \end{itemize}
Furthermore for convenience,
we write
 \[
 k_1\lesssim k_2
 \qquad(k_1,k_2>0),
 \]
if there is a universal constant $c>0$ independent of $R$, $\delta$
($\delta$ is defined in Section \ref{S5.2}) and $\mu$
($\mu$ is introduced in Section \ref{S6.3})
such that $k_1\leq ck_2$.

\subsection{Choice of $\lambda(t)$}
\label{S6.2}
We first define $\lambda(t)\in C^1([0,T])$ as a unique solution of
 \begin{equation}\label{6.1}
 \begin{cases}
 (G(\lambda(t),\tilde w(x,t)),\Lambda_y{\sf Q})_{L_y^2(\R^6)}=0
 \qquad\text{for } t\in(0,T),
 \\[2mm] \dis
 \lim_{t\to T}\left( \frac{\lambda}{\lambda_0} \right)
 =1.
 \end{cases}
 \end{equation}
We recall that $\lambda_0$ and $\sigma$ are defined by (see \eqref{5.2})
 \[
 \lambda_0
 =
 (T-t)^\frac{5}{4}\tau^{-\frac{15}{8}},
 \hspace{10mm}
 \sigma
 =
 -\left( \frac{5}{4}+\frac{15}{8\tau} \right)
 (T-t)^\frac{3}{2}\tau^{-\frac{15}{4}}.
 \]
 \begin{lem}\label{L6.1}
 It holds that
 \begin{align}\label{6.2}
 \dis
 \left| \lambda-\lambda_0 \right|
 &\lesssim
 \frac{\lambda_0}{\tau^\frac{1}{2}}
 \qquad \text{\rm for}\ t\in(0,T),
 \nonumber
 \\
 \dis
 \left|
 \lambda\dot\lambda
 -
 \left( \frac{\lambda}{\lambda_0} \right)^2\sigma
 \right|
 &\lesssim
 \frac{\lambda_0^2}{\tau^\frac{3}{2}(T-t)}
 \qquad \text{\rm for}\ t\in(0,T).
 \end{align}
 \end{lem}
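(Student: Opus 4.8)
The plan is to turn the orthogonality condition \eqref{6.1} into a scalar first-order ODE for $\lambda$ and integrate it backwards from $t=T$ using the terminal normalization $\lambda/\lambda_0\to1$. Since $n=6$ gives $\frac{n-2}{2}=2$, inserting the expression \eqref{5.7} for $G(\lambda,\tilde w)$ into \eqref{6.1} and pairing with $\Lambda_y{\sf Q}$ in $L_y^2(\R^6)$ yields
\[
\Bigl(\lambda\dot\lambda-\bigl(\tfrac{\lambda}{\lambda_0}\bigr)^{2}\sigma\Bigr)\,
(\chi_1\Lambda_y{\sf Q},\Lambda_y{\sf Q})_{L_y^2}
+\lambda^{2}\,(V\tilde w,\Lambda_y{\sf Q})_{L_y^2}=0 .
\]
Writing $b(t):=(V\tilde w(\cdot,t),\Lambda_y{\sf Q})_{L_y^2}$ and $I_1(t):=(\chi_1\Lambda_y{\sf Q},\Lambda_y{\sf Q})_{L_y^2}$, dividing by $\lambda^{2}$, and using the identity $\frac{d}{dt}\log\lambda_0=\sigma/\lambda_0^{2}$ — a one-line computation from \eqref{5.2}, reflecting that $\lambda_0$ and $\sigma$ were chosen precisely so that this holds — we obtain the clean equation
\[
\frac{d}{dt}\log\frac{\lambda}{\lambda_0}=-\frac{b(t)}{I_1(t)} .
\]
The second bound in \eqref{6.2} is then read off from the first display, $|\lambda\dot\lambda-(\lambda/\lambda_0)^{2}\sigma|=\lambda^{2}|b(t)|/I_1(t)$, once $b$ and $I_1$ are controlled and $\lambda\lesssim\lambda_0$ is known; the first bound follows by integrating the second display from $t$ to $T$.

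Two quantitative inputs are needed: $I_1(t)$ is trapped between two positive constants, and $|b(t)|\lesssim(T-t)^{-1}\tau^{-3/2}$. For $I_1$, the cut-off $\chi_1=\eta(\tau|z|)$ equals $1$ on $|y|<\sqrt{T-t}/(\tau\lambda)$, and this radius tends to $+\infty$ (using $\lambda\sim\lambda_0=(T-t)^{5/4}\tau^{-15/8}$); since $\Lambda_y{\sf Q}=-\kappa|y|^{-4}+O(|y|^{-6})$ in $\R^6$, one has $(\Lambda_y{\sf Q})^{2}|y|^{5}\sim|y|^{-3}$, integrable at infinity, so $\Lambda_y{\sf Q}\in L^{2}(\R^6)$ and $I_1(t)\to\|\Lambda_y{\sf Q}\|_{L^{2}(\R^6)}^{2}>0$, with $0<I_1(t)\le\|\Lambda_y{\sf Q}\|_{L^{2}(\R^6)}^{2}$. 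For $b$, we use $|\tilde w|\le{\cal W}$ from \eqref{5.10}. Because $V\Lambda_y{\sf Q}\sim|y|^{-8}$ decays fast, the bulk of $\int_{\R^6}V|\Lambda_y{\sf Q}|\,dy<\infty$ sits at $|y|=O(1)$, where $|z|=|y|\lambda/\sqrt{T-t}\to0$; there ${\cal W}\approx(T-t)^{-1}\tau^{-3/2}(1+|z|^2)\approx(T-t)^{-1}\tau^{-3/2}$, while the $|z|^{2}$-correction and the outer range $|z|>K\sqrt\tau$ (where ${\cal W}$ decays in $|z|$) each contribute an extra positive power of $T-t$ and are negligible. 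This gives $|b(t)|\lesssim(T-t)^{-1}\tau^{-3/2}$, hence $\bigl|\frac{d}{dt}\log(\lambda/\lambda_0)\bigr|\lesssim(T-t)^{-1}\tau^{-3/2}$.

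Integrating from $t$ to $T$, with $\log(\lambda/\lambda_0)\to0$ as $s\to T$, and substituting $\tau'=|\log(T-s)|$ (so $ds/(T-s)=d\tau'$),
\[
\Bigl|\log\frac{\lambda(t)}{\lambda_0(t)}\Bigr|
\lesssim\int_t^T\frac{ds}{(T-s)\,|\log(T-s)|^{3/2}}
=\int_\tau^{\infty}(\tau')^{-3/2}\,d\tau'
=2\tau^{-1/2}.
\]
Since $T=e^{-R}$ forces $\tau=|\log(T-t)|\ge R$ with $R$ large, the right side is $<1$, so exponentiating gives $\lambda/\lambda_0=1+O(\tau^{-1/2})$, i.e.\ $|\lambda-\lambda_0|\lesssim\lambda_0\tau^{-1/2}$, and in particular $\lambda\lesssim\lambda_0$; feeding this back into $|\lambda\dot\lambda-(\lambda/\lambda_0)^{2}\sigma|=\lambda^{2}|b|/I_1$ gives $\lesssim\lambda_0^{2}(T-t)^{-1}\tau^{-3/2}$, the second bound in \eqref{6.2}. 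One subtlety: the uniform estimates for $b$ and $I_1$, hence the ODE bound, require $\lambda/\lambda_0$ to stay, say, in $[e^{-1},e]$; this is closed off by a standard continuity argument — on the maximal subinterval of $(0,T)$ abutting $T$ on which $\lambda/\lambda_0\in[e^{-1},e]$, the integration above forces $|\log(\lambda/\lambda_0)|\le 2c\tau^{-1/2}\le 2cR^{-1/2}<1$, so $\lambda/\lambda_0$ cannot reach the endpoints and the subinterval must be all of $(0,T)$. The principal technical obstacle is the numerator estimate $|b(t)|\lesssim(T-t)^{-1}\tau^{-3/2}$: it requires exploiting the two-regime form of ${\cal W}$ in \eqref{5.9} together with the rapid decay of $V\Lambda_y{\sf Q}$ in dimension six, and the observation that the weight $1+|z|^{2}$ is harmless because the pairing localizes to $|y|=O(1)$, where $|z|$ is tiny. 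The remaining ingredients — the identity $\frac{d}{dt}\log\lambda_0=\sigma/\lambda_0^{2}$ and the elementary integration — are routine.
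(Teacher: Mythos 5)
Your proposal is correct and follows essentially the same route as the paper: reduce the orthogonality condition to the scalar ODE $\frac{d}{dt}\log(\lambda/\lambda_0)=-b(t)/I_1(t)$ via $\dot X_0=\sigma/\lambda_0^2$, bound the pairing $(V\tilde w,\Lambda_y{\sf Q})_{L^2_y}$ by $(T-t)^{-1}\tau^{-3/2}$ using the two-regime bound on ${\cal W}$ and the decay of $V\Lambda_y{\sf Q}$, and integrate backward from $t=T$ to get $|\log(\lambda/\lambda_0)|\lesssim\tau^{-1/2}$. Your explicit retention of the factor $(\chi_1\Lambda_y{\sf Q},\Lambda_y{\sf Q})_\rho$ and the continuity argument closing the mild circularity in the bound on $b$ are points the paper passes over silently, but they do not change the argument.
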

\begin{proof}
From \eqref{5.7},
the relation \eqref{6.1} is rewritten as
 \[
 \begin{cases}
 \dis
 \left(
 \frac{\dot\lambda}{\lambda}
 -
 \frac{\sigma}{\lambda_0^2}
 \right)
 \|\Lambda_y{\sf Q}\|_{L_y^2(\R^6)}^2
 +
 (V\tilde w(x,t),\Lambda_y{\sf Q}))_{L_y^2(\R^6)}
 =
 0
 &
 \text{for } t\in(0,T),
 \\[4mm]
 \dis
 \lim_{t\to T}\left( \frac{\lambda}{\lambda_0} \right)
 =1.
 \end{cases}
 \]
We put $X(t)=\log\lambda(t)$ and $X(t)=X_0(t)+X_1(t)$ with $X_0=\log\lambda_0$.
Since $\dot X_0=\frac{\sigma}{\lambda_0^2}$,
it holds that
 \[
 \begin{cases}
 \dis
 \dot X_1
 \|\Lambda_y{\sf Q}\|_{L_y^2(\R^6)}^2
 +
 (V\tilde w(x,t),\Lambda_y{\sf Q}))_{L_y^2(\R^6)}
 =
 0
 &
 \text{for } t\in(0,T),
 \\[2mm]
 \dis
 X_1=0
 &
 \text{for } t=T.
 \end{cases}
 \]
Since $\tilde{w}(x,t)=0$ for $t\in(T-\frac{\delta}{2},T)$ (see \eqref{5.10}),
it is clear that $X_1(t)=0$ for $t\in(T-\frac{\delta}{2},T)$.
This implies $\lambda(t)=\lambda_0(t)$ for $t\in(T-\frac{\delta}{2},T)$.
Furthermore
due to \eqref{5.9} - \eqref{5.10},
the second term is computed as
 \begin{align*}
 \int_{\R^6}
 |V\tilde w \Lambda_y{\sf Q}|dy
 &=
 \int_{|z|<K\sqrt\tau}
 |V\tilde w \Lambda_y{\sf Q}|dy
 +
 \int_{|z|>K\sqrt\tau}
 |V\tilde w \Lambda_y{\sf Q}|dy
 \\
 &\lesssim
 \frac{(T-t)^{-1}}{\tau^\frac{3}{2}}
 \int_{|y|<\frac{K\sqrt\tau\sqrt{T-t}}{\lambda}}
 \frac{1+|z|^2}{1+|y|^8}dy
 +
 \frac{(T-t)^{-1}}{\tau^\frac{7}{16}}
 \int_{|y|>\frac{K\sqrt\tau\sqrt{T-t}}{\lambda}}
 \frac{1}{|z|^\frac{1}{8}}
 \frac{dy}{1+|y|^8}
 \\
 &\lesssim
 \frac{(T-t)^{-1}}{\tau^\frac{3}{2}}
 +
 \frac{\lambda^2(T-t)^{-2}}{\tau^\frac{3}{2}}
 \int_{|z|<K\sqrt\tau}
 \frac{|y|^2}{1+|y|^8}dy
 \\
 &\lesssim
 \left( 1+\lambda^2(T-t)^{-1} \right)
 \frac{(T-t)^{-1}}{\tau^\frac{3}{2}}.
 \end{align*}
From the above relations,
we easily see that there exists a unique solution $X_1(t)$
satisfying $|X_1(t)|\lesssim\tau^{-\frac{1}{2}}$.
This completes the proof.
\end{proof}
From Lemma \ref{L6.1},
it holds that
 \begin{equation}\label{6.3}
 \lambda
 =
 (1+o(1))(T-t)^\frac{5}{4}\tau^{-\frac{15}{8}},
 \hspace{10mm}
 \dot\lambda
 =
 (1+o(1))\left( -\frac{5}{4}-\frac{15}{8\tau} \right)
 (T-t)^{-1}\lambda.
 \end{equation}

\subsection{Construction of $\epsilon(y,t)$}
\label{S6.3}
Throughout this subsection,
$\lambda(t)$ represents the function given in Lemma \ref{L6.1}.
For simplicity,
we write
 \[
 G(t)
 =
 G(\lambda(t),\tilde w(x,t))
 =
 \left( \lambda\dot\lambda-\left( \frac{\lambda}{\lambda_0} \right)^2\sigma \right)
 (\Lambda_y{\sf Q})
 \chi_1
 +
 \lambda^2V\tilde w(x,t).
 \]
We define a radially symmetric function $g(|y|,t)$ on $\R^6\times(0,T)$ as a solution of
 \[
 \begin{cases}
 \dis
 -H_yg
 =
 G(t)
 \quad
 \text{in } \R^6,
 \\ \dis
 \lim_{|y|\to\infty}g(|y|,t)=0.
 \end{cases}
 \]
 Since
 $(G(t),\Lambda_y{\sf Q})_{L_y^2(\R^6)}=0$
 for $t\in(0,T)$ (see \eqref{6.1}),
 the radially symmetric solution $g(|y|,t)$ is given by
 (see Section \ref{S3.7})
 \[
 g(r,t)
 =
 \Gamma\int_0^r
 (\Lambda_y{\sf Q})
 G(t)r_1^5dr_1
 -
 (\Lambda_y{\sf Q})
 \int_0^r
 \Gamma G(t)r_1^5dr_1,
 \qquad r=|y|.
 \]
Therefore
we verify from \eqref{5.9} - \eqref{5.10} and \eqref{6.2} that
 \begin{align*}
 |G(t)|
 <
 \left| \lambda\dot\lambda-\left( \frac{\lambda}{\lambda_0} \right)^2\sigma \right|
 |\Lambda_y{\sf Q}|\chi_1
 +
 \lambda^\frac{n-2}{2}V|w|
 \lesssim
 \frac{\kappa}{1+|y|^4},
 \end{align*}
where $\kappa$ is defined by
 \begin{equation}\label{6.4}
 \kappa(t)=
 \frac{\lambda(t)^2}{\tau^\frac{3}{2}(T-t)}.
 \end{equation}
Since $(G(t),\Lambda_y{\sf Q})_{L_y^2(\R^6)}=0$ and
$|\Gamma(r)|\lesssim(1+r)^{-(n-2)}$ for $r>0$,
we get
 \begin{align}\label{6.5}
 |g(r,t)|
 &\lesssim
 \left|
 \Gamma\int_r^\infty
 (\Lambda_y{\sf Q})
 G(t)r_1^5dr_1
 \right|
 +
 \frac{\kappa}{1+|y|^2}
 \lesssim
 \frac{\kappa}{1+|y|^2},
 \qquad r=|y|.
 \end{align}
We introduce a new time variable $s$ defined by
 \[
 \frac{ds}{dt}
 =
 \frac{1}{\lambda(t)^2}, \qquad s(t)|_{t=0}=0.
 \]
From \eqref{6.3} - \eqref{6.4},
we see that
 \begin{align}\label{6.6}
 \frac{d\kappa}{ds}
 &=
 \frac{dt}{ds}\frac{d\kappa}{dt}
 =
 \frac{\lambda^4}{\tau^\frac{3}{2}(T-t)^2}
 \left( \frac{2\dot\lambda}{\lambda}(T-t)+1-\frac{3}{2\tau} \right)
 \nonumber
 \\
 &=
 -\left( \frac{3}{2}+\frac{21}{4\tau}+O(\tau^{-\frac{3}{2}}) \right)
 \frac{\lambda^4}{\tau^\frac{3}{2}(T-t)^2}
 \nonumber
 \\
 &=
 -(1+o(1))\left( \frac{3}{2}+\frac{21}{4\tau} \right)
 \tau^{-\frac{15}{4}}(T-t)^\frac{3}{2}\kappa.
\end{align}
From this relation,
it holds that for $s_2>s_1$
 \begin{align*}
 \kappa(s_2)-\kappa(s_1)
 &=
 \int_{s_1}^{s_2}
 \frac{d\kappa}{ds}ds
 >
 -2\tau_1^{-\frac{15}{4}}(T-t_1)^\frac{3}{2}\kappa(s_1)
 \int_{s_1}^{s_2}ds
 \\
 &>
 -T^\frac{3}{2}(s_2-s_1)\kappa(s_1),
 \end{align*}
where $t_1=t|_{s=s_1}$ and $\tau_1=\tau|_{t=t_1}$.
This implies
 \begin{equation}\label{6.7}
 \kappa(s_2)>\frac{1}{2}\kappa(s_1)
 \qquad\text{if } 0<s_2-s_1<\frac{1}{4T^\frac{3}{2}}.
 \end{equation}
Let $\mu_1^{(8R)}<0$ and $\psi_1^{(8R)}(y)\in H_0^1(B_{8R})$ be defined
in Section \ref{S3.3}.
A goal of this subsection is to construct a solution of
 \[
 \begin{cases}
 \pa_s E
 =
 H_yE+g\chi_{4R}
 &
 \text{in } B_{8R}\times(0,\infty),
 \\
 E=0
 &
 \text{on } \pa B_{8R}\times(0,\infty),
 \\
 \dis
 E=\frac{{\sf d}_\text{in}}{\mu_1^{(8R)}}\psi_1^{(8R)}
 &
 \text{for } s=0,
 \end{cases}
 \]
where $\chi_{4R}(y)=\eta(\frac{|y|}{4R})$.
The parameter ${\sf d}_\text{in}$ is determined below.
The desired solution $\epsilon(y,t)$ mentioned in Section \ref{S5.2}
is obtained by $\epsilon(y,t)=H_yE(y,t)$.
Let $M_1$ be the constant given in Lemma \ref{L3.3} and fix a large constant $M>M_1$
such that 
 \begin{equation}\label{6.8}
 |y|^2V(y)+|y|^\frac{7}{2}e^{-\sqrt{|\mu_1|}|y|}\ll1
 \qquad\text{for } |y|>2M.
 \end{equation}
To apply Lemma \ref{L3.5},
we consider the regularized problem.
 \[
 \begin{cases}
 \pa_s E^{(\mu)}
 =
 H_yE^{(\mu)}+g^{(\mu)}\chi_{4R}
 &
 \text{in } B_{8R}\times(0,\infty),
 \\
 E^{(\mu)}=0
 &
 \text{on } \pa B_{8R}\times(0,\infty),
 \\
 \dis
 E^{(\mu)}=\frac{{\sf d}_\text{in}^{(\mu)}}{\mu_1^{(8R)}}\psi_1^{(8R)}
 &
 \text{for } s=0,
 \end{cases}
 \]
where $g^{(\mu)}(y,s)\in C^\infty(\bar B_{8R}\times[0,\infty))$ is an approximation of
$g(y,s)$ satisfying
 \begin{itemize}
 \item $\dis g^{(\mu)}(y,s)=0$ \quad for $(y,s)\in B_{8R}\times(0,\mu)$,
 \item
 $\dis\lim_{\mu\to0}\sum_{i=0}^1
 \|\nabla_y^ig^{(\mu)}-\nabla_y^ig\|_{L^\infty(B_{8R}\times(0,s_1)}
 +\|\Delta_yg^{(\mu)}-\Delta_yg\|_{L^\infty(B_{8R}\times(0,s_1)}
 =0$
 \quad for any $s_1>0$,
 \item
 $\dis|\nabla_y^ig^{(\mu)}(y,s)|
 <\sup_{\max\{0,s-1\}<s'<s+1}\ \sup_{|y-y'|<1}|\nabla_y^ig(y',s')|$
 \quad for $(y,s)\in B_{8R}\times(0,\infty)$ \quad ($i=0,1$),
 \item
 $\dis|\Delta_yg^{(\mu)}(y,s)|<
 \sup_{\max\{0,s-1\}<s'<s+1}\ \sup_{|y-y'|<1}|\Delta_yg(y',s')|$
 \quad for $(y,s)\in B_{8R}\times(0,\infty)$.
 \end{itemize}
From this property, \eqref{6.5} and \eqref{6.7},
it holds that
 \begin{equation}\label{6.9}
 |g^{(\mu)}(y,s)|
 \lesssim
 \frac{1}{1+|y|^2}\sup_{\max\{0,s-1\}<s'<s+1}\kappa(s')
 \lesssim
 \frac{\kappa(s)}{1+|y|^2}.
 \end{equation}
In the same manner,
since $|\nabla_yg|=|\pa_rg|$ and $(\Delta_y+V(y))g=-G$,
we get
 \begin{equation}\label{6.10}
 |\nabla_yg^{(\mu)}(y,s)|
 \lesssim
 \frac{\kappa(s)}{1+|y|^3},
 \qquad
 |\Delta_yg^{(\mu)}(y,s)|
 \lesssim
 \frac{\kappa(s)}{1+|y|^4}.
 \end{equation}
We first consider
 \begin{equation}\label{6.11}
 \begin{cases}
 \pa_s E_1^{(\mu)}
 =
 \Delta_yE_1^{(\mu)}+(1-\chi_M)VE_1^{(\mu)}+g^{(\mu)}\chi_{4R}
 &
 \text{in } B_{8R}\times(0,\infty),
 \\
 E_1^{(\mu)}=0
 &
 \text{on } \pa B_{8R}\times(0,\infty),
 \\
 E_1^{(\mu)}=0
 &
 \text{for } s=0.
 \end{cases}
 \end{equation}
From a parabolic regularity theory,
we verify that
 \[
 E_1^{(\mu)}(y,s)\in C^\infty(\bar B_{8R}\times[0,\infty)).
 \]
We now investigate the regularity of the limiting function
$E_1=\lim_{\mu\to0}E_1^{(\mu)}$.
A parabolic $L^p$ theory (Theorem 7.32 p. 182 \cite{Lieberman}) gives
 \[
 \|E_1^{(\mu)}-E_1^{(\mu')}\|_{W_p^{2,1}(B_{8R}\times(0,s_1))}
 <
 c
 \|g^{(\mu)}\chi_{4R}-g^{(\mu')}\chi_{4R}\|_{L^p(B_{8R}\times(0,s_1))}
 \]
for any $1<p<\infty$ and $s_1>0$,
where
 \[
 \|E\|_{W_p^{2,1}(Q)}
 =\|E\|_{L^p(Q)}
 +\|\nabla_yE\|_{L^p(Q)}
 +\|D_y^2E\|_{L^p(Q)}
 +\|\pa_tE\|_{L^p(Q)}.
 \]
Furthermore we get from Lemma \ref{L3.5} (iii) that
 \[
 \|\nabla_yE_1^{(\mu)}-\nabla_yE_1^{(\mu')}\|_{L^\infty(B_{8R}\times(0,s_1))}
 <
 c
 \|g^{(\mu)}\chi_{4R}-g^{(\mu')}\chi_{4R}\|_{L^{p,q}(B_{8R}\times(0,s_1))}
 \]
for any $p,q\in(1,\infty]$ with $\frac{n}{p}+\frac{2}{q}<1$ and $s_1>0$.
Let
 \[
 e_1^{(\mu)}
 =
 \Delta_yE_1^{(\mu)}+(1-\chi_M)VE_1^{(\mu)}
 =
 H_y^{(M)}E_1^{(\mu)}.
 \]
This solves
 \begin{equation}\label{6.12}
 \begin{cases}
 \pa_s e_1^{(\mu)}
 =
 \Delta_ye_1^{(\mu)}+(1-\chi_M)Ve_1^{(\mu)}+H_y^{(M)}g^{(\mu)}\chi_{4R}
 &
 \text{in } B_{8R}\times(0,\infty),
 \\
 e_1^{(\mu)}=0
 &
 \text{on } \pa B_{8R}\times(0,\infty),
 \\
 e_1^{(\mu)}=0
 &
 \text{for } s=0.
 \end{cases}
 \end{equation}
In the same way as above,
we see that
 \[
 \|e_1^{(\mu)}-e_1^{(\mu')}\|_{W_p^{2,1}(B_{8R}\times(0,s_1))}
 <
 c
 \|H_y^{(M)}g^{(\mu)}\chi_{4R}-H_y^{(M)}g^{(\mu')}\chi_{4R}\|_{L^p(B_{8R}\times(0,s_1))}
 \]
for any $1<p<\infty$ and $s_1>0$,
and
 \[
 \|\nabla_ye_1^{(\mu)}-\nabla_ye_1^{(\mu')}\|_{L^\infty(B_{8R}\times(0,s_1))}
 <
 c
 \|H_y^{(M)}g^{(\mu)}\chi_{4R}-H_y^{(M)}g^{(\mu')}\chi_{4R}\|_{L^{p,q}(B_{8R}\times(0,s_1))}
 \]
for any $p,q\in(1,\infty]$ with $\frac{n}{p}+\frac{2}{q}<1$ and $s_1>0$.
Therefore
it holds that for any $p\in(1,\infty)$ and $s_1>0$
 \begin{align}\label{6.13}
 \begin{array}{cl}
 E_1^{(\mu)}\to E_1 & \text{in } W_p^{2,1}(B_{8R}\times(0,s_1)),
 \\[1mm]
 \Delta_yE_1^{(\mu)} \to \Delta_yE_1 & \text{in } W_p^{2,1}(B_{8R}\times(0,s_1)),
 \\[1mm]
 \nabla_yE_1^{(\mu)}\to\nabla_yE_1 & \text{in } L^\infty(B_{8R}\times(0,s_1)),
 \\
 \nabla_y\Delta_yE_1^{(\mu)}\to\nabla_y\Delta_yE_1 & \text{in }
 L^\infty(B_{8R}\times(0,s_1)).
 \end{array}
 \end{align}

\begin{lem}\label{L6.2}
 It holds that
 \begin{align*}
 |E_1^{(\mu)}(y,s)|
 &\lesssim
 \kappa(s)
 \log R
 \quad\text{\rm for}\ (y,s)\in B_{8R}\times(0,\infty),
 \\[1mm]
 |\nabla_yE_1^{(\mu)}(y,s)|
 &\lesssim
 \kappa(s)
 \frac{\log R}{1+|y|}
 \quad\text{\rm for}\ (y,s)\in B_{6R}\times(0,\infty).
 \end{align*}
\end{lem}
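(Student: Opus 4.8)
\emph{Proof strategy.} The plan is to prove the pointwise bound on $E_1^{(\mu)}$ by constructing an explicit time--dependent supersolution, and then to obtain the gradient bound from a rescaled interior parabolic estimate. First I would remove the zeroth order term by the ground state substitution $E_1^{(\mu)}=p_MF$. Since $p_M\in[k,1]$ and $H_y^{(M)}p_M=0$ by Lemma~\ref{L3.3}, equation \eqref{6.11} becomes
\[
\pa_sF=\Delta_yF+\frac{2\nabla_yp_M}{p_M}\cdot\nabla_yF+\frac{g^{(\mu)}\chi_{4R}}{p_M}
\quad\text{in } B_{8R}\times(0,\infty),\qquad F|_{\pa B_{8R}}=0,\quad F|_{s=0}=0,
\]
which has no zeroth order term, so the usual parabolic comparison principle applies to $F$. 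From $\bigl(r^5p_M'\bigr)'=-r^5(1-\chi_M)Vp_M$, the bound $V=p{\sf Q}^{p-1}\lesssim|y|^{-4}$, and $p_M'\equiv0$ on $\{|y|<M\}$, one gets $|\nabla_yp_M/p_M|\lesssim|y|^{-3}$ on $\{|y|>M\}$. Now take the smooth radial profile $\phi(r)=\log\bigl(\sqrt{1+64R^2}/\sqrt{1+r^2}\bigr)$, which is nonnegative on $[0,8R]$, vanishes on $\pa B_{8R}$, and satisfies $\|\phi\|_{L^\infty(B_{8R})}\lesssim\log R$. A direct computation shows $-\Delta_y\phi-\tfrac{2\nabla_yp_M}{p_M}\cdot\nabla_y\phi\gtrsim(1+r^2)^{-1}$ on $B_{8R}$, the drift contribution being harmless because it is supported in $\{|y|>M\}$ and is $\lesssim|y|^{-4}\ll(1+|y|^2)^{-1}$ there once $M$ is large (see \eqref{6.8}). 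Since $d\kappa/ds<0$ by \eqref{6.6} and $\phi\ge0$, the function $\bar F(y,s)=A\kappa(s)\phi(y)$ obeys
\[
\pa_s\bar F-\Delta_y\bar F-\frac{2\nabla_yp_M}{p_M}\cdot\nabla_y\bar F
\ge A\kappa(s)\Bigl(-\Delta_y\phi-\tfrac{2\nabla_yp_M}{p_M}\cdot\nabla_y\phi\Bigr)
\gtrsim\frac{A\kappa(s)}{1+|y|^2},
\]
which dominates $|g^{(\mu)}\chi_{4R}|/p_M\lesssim\kappa(s)(1+|y|^2)^{-1}$ by \eqref{6.9} once $A$ is a sufficiently large absolute constant. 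As $\bar F\ge0=F$ on the parabolic boundary, comparison gives $|F|\le\bar F$, hence $|E_1^{(\mu)}|=|p_MF|\le p_M\bar F\le A\kappa(s)\phi(y)\lesssim\kappa(s)\log R$.

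For the gradient bound I would fix $y_0\in B_{6R}$ and apply Lemma~\ref{L3.5} on the parabolic cylinder centered at $(y_0,s)$ of spatial radius $\rho_0\sim M$ when $|y_0|\le2M$ and $\rho_0\sim|y_0|$ when $|y_0|>2M$; this cylinder lies in $B_{8R}$ because $|y_0|\le6R$, and when $s<\rho_0^2$ one uses the initial--time version of Lemma~\ref{L3.5} instead. After parabolic rescaling to a unit cylinder the zeroth order coefficient is bounded uniformly in $R$ --- for $|y_0|>2M$ it is $\lesssim|y_0|^2\,|y_0|^{-4}=|y_0|^{-2}$ --- and, using \eqref{6.7} to replace $\kappa(s')$ by $\kappa(s)$ throughout the cylinder (legitimate since $\rho_0^2\le(8R)^2\ll T^{-3/2}$), the $L^\infty$ bound just proved together with \eqref{6.9} gives $\|E_1^{(\mu)}\|_{L^\infty}\lesssim\kappa(s)\log R$ on the cylinder and a rescaled source of size $\lesssim\kappa(s)$. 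Lemma~\ref{L3.5} then yields $\rho_0\,|\nabla_yE_1^{(\mu)}(y_0,s)|\lesssim\kappa(s)\log R$, i.e.\ $|\nabla_yE_1^{(\mu)}(y_0,s)|\lesssim\kappa(s)\log R/(1+|y_0|)$.

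The delicate point is the barrier step: one must see that the relevant elliptic profile grows only logarithmically in $R$, which is precisely the borderline divergence of $\int_{|y|<8R}|y|^{-4}(1+|y|^2)^{-1}|y|^5\,d|y|$ special to $n=6$, while simultaneously keeping the drift $\nabla_yp_M/p_M$ negligible near $|y|\sim M$, where it competes with the small second--order term --- this is exactly why $M$ must be taken large, consistently with \eqref{6.8}. Everything else is a routine comparison argument and a standard rescaled Schauder-type estimate.
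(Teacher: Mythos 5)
Your overall strategy coincides with the paper's: a barrier/comparison argument for the $L^\infty$ bound, followed by a rescaled interior parabolic estimate (with two cases according to whether the rescaled cylinder reaches the initial time) for the gradient bound. The only structural difference is minor: you factor out $p_M$ to remove the zeroth-order term and then use the explicit logarithmic barrier $\phi(r)=\log\bigl(\sqrt{1+64R^2}/\sqrt{1+r^2}\bigr)$, whereas the paper keeps the operator $\Delta_y+(1-\chi_M)V$ and builds the barrier $p(r)=p_1(r)\int_r^{16R}(p_1^2r_1^5)^{-1}\int_0^{r_1}p_1r_2^5(1+r_2^2)^{-1}dr_2\,dr_1$ directly from Lemma \ref{L3.3}. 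Both profiles are $O(\log R)$ for exactly the borderline reason you identify, and your gradient step (radius $\sim|y_0|$ cylinders, bounded rescaled potential $\lesssim|y_0|^{-2}$, and \eqref{6.7} to freeze $\kappa$ on the cylinder) is essentially the paper's scaling argument with radius $r/6$.

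There is, however, a sign error in your supersolution verification. You assert that $\pa_s\bar F-\Delta_y\bar F-\frac{2\nabla_yp_M}{p_M}\cdot\nabla_y\bar F\ge A\kappa(s)\bigl(-\Delta_y\phi-\frac{2\nabla_yp_M}{p_M}\cdot\nabla_y\phi\bigr)$ ``since $d\kappa/ds<0$ and $\phi\ge0$.'' But $\pa_s\bar F=A\kappa'(s)\phi\le0$ under exactly those hypotheses, so your displayed inequality is equivalent to $A\kappa'(s)\phi\ge0$, which is false: the time derivative is a loss that must be absorbed, not a term that can be discarded with a ``$\ge$.'' The conclusion survives only because the loss is quantitatively negligible: by \eqref{6.6} one has $|\kappa'(s)|\lesssim\tau^{-15/4}(T-t)^{3/2}\kappa(s)\le T^{3/2}\kappa(s)=e^{-3R/2}\kappa(s)$, hence $|A\kappa'(s)\phi|\lesssim Ae^{-3R/2}\kappa(s)\log R$, which is dominated by $\tfrac{1}{2}A\kappa(s)(1+|y|^2)^{-1}$ on $B_{8R}$ because $e^{-3R/2}\log R\,(1+64R^2)\ll1$. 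This absorption — which is precisely where the choice $T=e^{-R}$ enters the argument — is carried out explicitly in the paper's proof and must be added to yours; as written, your comparison function has not been shown to be a supersolution.
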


\begin{proof}
To construct a comparison function,
we put
 \[
 p(r)
 =
 p_1(r)\int_r^{16R}\frac{dr_1}{p_1(r_1)^2r_1^5}\int_0^{r_1}
 \frac{p_1(r_2)r_2^5}{1+r_2^2}dr_2,
 \qquad r=|y|,
 \]
where $p_1(r)$ is a radially symmetric function given in Lemma \ref{L3.3}.
The function $p(r)$ gives a positive radially symmetric solution of
 \[
 \begin{cases}
 \dis
 \Delta_yp+(1-\chi_M)Vp+\frac{1}{1+|y|^2}=0 & \text{in } B_{16R},
 \\
 p=0 & \text{on } \pa B_{16R}.
 \end{cases}
 \]
Since $k<p_1(r)<1$ for $r>0$ (see Lemma \ref{L3.3}),
there exists $k_1>0$ independent of $M$, $R$ such that
 \begin{equation}\label{6.14}
 0<p(r)<k_1\log R
 \qquad\text{for } r\in(0,16R).
 \end{equation}
We now check that $C_1\kappa(s)p(r)$ gives a super-solution of \eqref{6.11}
when $C_1\gg1$.
We see from \eqref{6.6} and \eqref{6.14} that
 \begin{align*}
 \left( \pa_s-\Delta_y-(1-\chi_M)V \right)
 \kappa p
 &=
 \frac{d\kappa}{ds}p
 +
 \frac{\kappa}{1+|y|^2}
 >
 \left(
 -2\tau^{-\frac{15}{4}}(T-t)^\frac{3}{2}p+\frac{1}{1+|y|^2}
 \right)
 \kappa
 \\
 &>
 \left(
 -k_1T^\frac{3}{2}\log R+\frac{1}{1+|y|^2}
 \right)
 \kappa.
 \end{align*}
Since $T=e^{-R}$,
if $R$ is large enough,
it holds that
 \begin{align*}
 k_1T^\frac{3}{2}\log R
 <
 \frac{k_1e^{-\frac{3}{2}R}\log R\cdot(1+(16R)^2)}{1+|y|^2}
 <
 \frac{\frac{1}{2}}{1+|y|^2}
 \qquad\text{for } |y|<16R.
 \end{align*}
Therefore
we obtain
 \begin{align*}
 (\pa_s-\Delta_y-(1-\chi_M)V)
 \kappa p
 >
 \frac{\kappa}{2(1+|y|^2)}
 \qquad\text{for } |y|<16R.
 \end{align*}
Since $|g^{(\mu)}|\lesssim\frac{\kappa}{1+|y|^2}$
(see \eqref{6.9}),
by a comparison argument,
we verify that there exists $C_1>0$ such that
 \begin{equation}\label{6.15}
 |E_1^{(\mu)}(y,s)|
 <
 C_1\kappa(s)p(r)
 <
 C_1k_1\kappa(s)\log R
 \qquad \text{for } (y,s)\in B_{8R}\times(0,\infty).
 \end{equation}
We next derive a gradient estimate.
From Lemma \ref{L3.5} (i) - (ii),
we deduce that
 \[
 \sup_{|y|<1}|\nabla_yE_1^{(\mu)}(y,s)|
 \lesssim
 \sup_{\max\{0,s-1\}<s'<s}
 \
 \sup_{|y'|<2}
 |E_1^{(\mu)}(y',s')|
 +
 \|g^{(\mu)}\chi_{4R}\|_{L^{p,q}(Q)},
 \]
where $Q=B_2\times(\max\{0,s-1\},s)$ and $p$, $q$ are as in Lemma \ref{L3.5}.
We apply \eqref{6.7}, \eqref{6.9} and \eqref{6.15} to get
 \begin{align*}
 \sup_{|y|<1}
 |\nabla_yE_1^{(\mu)}(y,s)|
 \lesssim
 \log R
 \sup_{\max\{0,s-1\}<s'<s}\kappa(s')
 \lesssim
 \kappa(s)
 \log R
 \qquad\text{for } s>0.
 \end{align*}
Next we fix $(y,s)\in(B_{6R}\setminus B_1)\times(0,\infty)$ and put
 \[
 r=|y|.
 \]
We consider two cases separately.
For the case (i) $s>\frac{r^2}{36}$,
we put
 \[
 \tilde E_1^{(\mu)}(Y,S)
 =
 E_1\left( y+\frac{rY}{6},s+\frac{r^2(S-1)}{36} \right),
 \qquad
 \tilde g^{(\mu)}(Y,S)
 =
 g^{(\mu)}
 \left( \left|y+\frac{rY}{6}\right|,s+\frac{r^2(S-1)}{36} \right).
 \]
The function $\tilde E_1^{(\mu)}(Y,s)$ is defined on $(Y,S)\in B_2\times(0,1)$
and satisfies
 \[
 \pa_S\tilde E_1^{(\mu)}
 =
 \Delta_Y\tilde E_1^{(\mu)}
 +
 \frac{r^2}{36}(1-\tilde\chi_M)\tilde V\tilde E_1^{(\mu)}
 +
 \frac{r^2}{36}
 \tilde g^{(\mu)}
 \tilde \chi_{4R}
 \qquad
 \text{in } B_2\times(0,1).
 \]
Since
$\frac{2}{3}r<|y+\frac{r}{3}Y|<\frac{4}{3}r$ for $|Y|<1$,
the potential term satisfies
 \[
 r^2(1-\tilde\chi_M)\tilde V
 \lesssim
 \frac{r^2}{1+r^4}
 \qquad\text{for } |Y|<1.
 \]
Therefore
Lemma \ref{L3.5} (i) gives
 \begin{align*}
 \sup_{\frac{1}{2}<S<1}\sup_{|Y|<1}|\nabla_Y\tilde E_1^{(\mu)}(Y,S)|
 &\lesssim
 \|\tilde E_1^{(\mu)}\|_{L_{(Y,S)}^\infty(B_2\times(0,1))}
 +
 \|r^2\tilde g^{(\mu)}\|_{L_{(Y,S)}^\infty(B_2\times(0,1))}.
 \end{align*}
From \eqref{6.9} and \eqref{6.15},
we note that
 \begin{align*}
 \|\tilde E_1^{(\mu)}\|_{L_{(Y,S)}^\infty(B_1\times(0,1))}
 +
 \|r^2\tilde g^{(\mu)}\|_{L_{(Y,S)}^\infty(B_1\times(0,1))}
 \lesssim
 (\log R+1)
 \sup_{s-\frac{r^2}{36}<s'<s}\kappa(s').
 \end{align*}
Since $\frac{r^2}{36}<6R^2=6(\log T)^2<\frac{1}{4}T^{-\frac{3}{2}}$ for $r<6R$,
it follows from \eqref{6.7} that
 \[
 \sup_{s-\frac{r^2}{36}<s'<s}\kappa(s')\lesssim\kappa(s).
 \]
Therefore
we obtain
 \[
 \sup_{\frac{1}{2}<S<1}\sup_{|Y|<1}|\nabla_Y\tilde E_1^{(\mu)}(Y,S)|
 \lesssim\kappa(s)\log R.
 \]
Since $\nabla_Y\tilde E_1^{(\mu)}(Y,S)
 =\frac{r}{6}\nabla_y\bar E_1^{(\mu)}(y+\frac{r}{6}Y,s+\frac{r^2(S-1)}{36})$,
we deduce that
 \[
 \frac{r}{6}|\nabla_yE_1^{(\mu)}(y,s)|
 =
 |\nabla_Y\tilde E_1^{(\mu)}(Y,S)|_{(Y,S)=(0,1)}|
 \lesssim
 \kappa(s)\log R.
 \]
This gives the desired estimate.
On the other hand,
for the case (ii) $0<s<\frac{r^2}{36}$,
we put
 \[
 \tilde E_1^{(\mu)}(Y,S)
 =
 E_1^{(\mu)}
 \left( y+\frac{rY}{6},\frac{r^2S}{36} \right),
 \qquad
 \tilde g^{(\mu)}(Y,S)
 =
 g^{(\mu)}
 \left( \left|y+\frac{rY}{6}\right|,\frac{r^2S}{36} \right).
 \]
The function $\tilde E_1^{(\mu)}(Y,S)$ is defined on $(Y,S)\in B_2\times(0,1)$
and satisfies
 \[
 \begin{cases}
 \dis
 \pa_S\tilde E_1^{(\mu)}
 =
 \Delta_Y\tilde E_1^{(\mu)}
 +
 \frac{r^2}{36}(1-\tilde\chi_M)\tilde V\tilde E_1^{(\mu)}
 +
 \frac{r^2}{36}
 \tilde g^{(\mu)}
 \tilde \chi_{4R}
 &
 \text{in } B_2\times(0,1),
 \\
 \tilde E_1^{(\mu)}=0
 &
 \text{for } S=0.
 \end{cases}
 \]
Since $\tilde E_1|_{S=0}=0$,
we get from Lemma \ref{L3.5} (ii) that
 \begin{align*}
 \sup_{0<S<1}\sup_{|Y|<1}|\nabla_Y\tilde E_1^{(\mu)}(Y,S)|
 &\lesssim
 \|\tilde E_1^{(\mu)}\|_{L_{(Y,S)}^\infty(B_2\times(0,1))}
 +
 \|r^2\tilde g^{(\mu)}\|_{L_{(Y,S)}^\infty(B_2\times(0,1))}.
 \end{align*}
Since $\frac{36}{r^2}s<1$ and $\frac{r^2}{36}<\frac{1}{4}T^{-\frac{3}{2}}$
we obtain
 \[
 \frac{r}{6}|\nabla_yE_1^{(\mu)}(y,s)|
 =
 |\nabla_Y\tilde E_1^{(\mu)}(Y,S)|_{(Y,S)=(0,\frac{36}{r^2}s)}|
 \lesssim
 \sup_{0<S<1}\kappa\left( \frac{r^2S}{36} \right)\log R
 \lesssim
 \kappa(s)\log R.
 \]
Therefore
the proof is completed.
\end{proof}

 \begin{lem}\label{L6.3}
 It holds that
 \begin{align*}
 |\Delta_yE_1^{(\mu)}(y,s)|
 &\lesssim
 \kappa(s)\left( \frac{1}{1+|y|^2}+\frac{\log R}{1+|y|^4} \right)
 \quad\text{\rm for}\ (y,s)\in B_{8R}\times(0,\infty),
 \\[1mm]
 |\nabla_y\Delta_yE_1^{(\mu)}(y,s)|
 &\lesssim
 \kappa(s)\left( \frac{1}{1+|y|^3}+\frac{\log R}{1+|y|^4} \right)
 \quad\text{\rm for}\ (y,s)\in B_{6R}\times(0,\infty).
 \end{align*}
\end{lem}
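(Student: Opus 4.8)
The plan is to reduce everything to sharp weighted bounds for the auxiliary function
$e_1^{(\mu)}=H_y^{(M)}E_1^{(\mu)}=\Delta_yE_1^{(\mu)}+(1-\chi_M)VE_1^{(\mu)}$, which solves \eqref{6.12}, and then to recover $\Delta_yE_1^{(\mu)}$ and $\nabla_y\Delta_yE_1^{(\mu)}$ from the algebraic identity
\[
\Delta_yE_1^{(\mu)}=e_1^{(\mu)}-(1-\chi_M)VE_1^{(\mu)}
\]
together with Lemma \ref{L6.2}. The point that makes this work is that the source of \eqref{6.12} decays one power faster than that of \eqref{6.11}: writing $H_y^{(M)}g^{(\mu)}=\Delta_yg^{(\mu)}+(1-\chi_M)Vg^{(\mu)}$ and using \eqref{6.10} together with $V\lesssim(1+|y|)^{-4}$ (and the fact that any commutator contributions produced by the cut-off $\chi_{4R}$ are supported in $\{4R<|y|<8R\}$ and bounded there by $\lesssim\kappa(s)R^{-4}$), one gets $|H_y^{(M)}g^{(\mu)}\chi_{4R}|\lesssim\kappa(s)(1+|y|)^{-4}$.

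\emph{Step 1 (pointwise bound for $e_1^{(\mu)}$).} I would show $|e_1^{(\mu)}(y,s)|\lesssim\kappa(s)(1+|y|^2)^{-1}$ on $B_{8R}\times(0,\infty)$ by a comparison argument parallel to the proof of Lemma \ref{L6.2}, but with the faster-decaying source. Put
\[
q(r)=p_1(r)\int_r^{16R}\frac{dr_1}{p_1(r_1)^2r_1^5}\int_0^{r_1}\frac{p_1(r_2)r_2^5}{1+r_2^4}\,dr_2,\qquad r=|y|,
\]
with $p_1$ as in Lemma \ref{L3.3}, so that $q$ is a positive radial solution of $\Delta_yq+(1-\chi_M)Vq+(1+|y|^4)^{-1}=0$ in $B_{16R}$ with $q=0$ on $\pa B_{16R}$. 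Here the improvement over Lemma \ref{L6.2} appears: since $\int_0^{r_1}\frac{r_2^5}{1+r_2^4}dr_2\sim\frac{r_1^2}{2}$ for large $r_1$, the outer integrand behaves like $(2r_1^3)^{-1}$, hence $\int_r^{16R}\sim\frac14r^{-2}$, and so $0<q(r)\lesssim(1+r^2)^{-1}$ with a constant \emph{independent of $R$} — there is no logarithmic loss. Using \eqref{6.6} and $T=e^{-R}$ one has $\bigl|\tfrac{d\kappa}{ds}\bigr|q\ll\kappa(1+|y|^4)^{-1}$ for $|y|<16R$, so $(\pa_s-\Delta_y-(1-\chi_M)V)(C\kappa q)\ge\frac{C}{2}\kappa(1+|y|^4)^{-1}$; choosing $C$ large enough and comparing $C\kappa q$ against $\pm e_1^{(\mu)}$ on the parabolic boundary of $B_{8R}\times(0,\infty)$ (where $e_1^{(\mu)}=0$ and $\kappa q>0$) gives the bound, using \eqref{6.7} to relate the values of $\kappa$ at nearby times.

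\emph{Step 2 (gradient bound for $e_1^{(\mu)}$), and conclusion.} Next I would obtain $|\nabla_ye_1^{(\mu)}(y,s)|\lesssim\kappa(s)(1+|y|^3)^{-1}$ on $B_{6R}\times(0,\infty)$ from Step 1 and the interior gradient estimate Lemma \ref{L3.5}, applied to \eqref{6.12} exactly as in the second half of the proof of Lemma \ref{L6.2}: directly on $B_2\times(\cdot,\cdot)$ near the origin, and for $1<|y|<6R$ after the parabolic rescaling $\tilde e_1^{(\mu)}(Y,S)=e_1^{(\mu)}(y+\tfrac r6Y,\cdot)$ with $r=|y|$, for which the rescaled potential $\tfrac{r^2}{36}(1-\tilde\chi_M)\tilde V$ and the rescaled source $\tfrac{r^2}{36}\widetilde{H_y^{(M)}g^{(\mu)}\chi_{4R}}$ are both $\lesssim r^{-2}$ on $B_2$, so Lemma \ref{L3.5} yields $\tfrac r6|\nabla_ye_1^{(\mu)}(y,s)|\lesssim\|\tilde e_1^{(\mu)}\|+\|r^2\widetilde{H_y^{(M)}g^{(\mu)}\chi_{4R}}\|\lesssim\kappa(s)r^{-2}$ (again invoking \eqref{6.7}, which applies since $\tfrac{r^2}{36}<\tfrac14T^{-3/2}$ for $r<6R$). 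Finally I would combine: since $V\lesssim(1+|y|)^{-4}$, $|\nabla_yV|\lesssim(1+|y|)^{-5}$ and $\nabla_y\chi_M$ is supported near $|y|\sim M$ with $M$ fixed, Lemma \ref{L6.2} gives $|(1-\chi_M)VE_1^{(\mu)}|\lesssim\kappa\log R\,(1+|y|)^{-4}$ and $|\nabla_y((1-\chi_M)VE_1^{(\mu)})|\lesssim\kappa\log R\,(1+|y|)^{-4}$ on $B_{6R}$, and adding the estimates of Steps 1 and 2 through the identity for $\Delta_yE_1^{(\mu)}$ and its gradient produces precisely the two claimed bounds.

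The main obstacle is Step 1, specifically verifying that the comparison function $q$ satisfies $q\lesssim(1+r^2)^{-1}$ with an $R$-independent constant — this is where the extra power of decay of the source is genuinely exploited, in contrast to the $\log R$ that necessarily appears in Lemma \ref{L6.2} — and checking carefully that the full source $H_y^{(M)}g^{(\mu)}\chi_{4R}$, including every contribution coming from the cut-off $\chi_{4R}$, is dominated by $C\kappa(1+|y|^4)^{-1}$. The remaining steps are routine adaptations of the arguments already carried out for Lemma \ref{L6.2}.
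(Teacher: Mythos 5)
Your proposal is correct and follows essentially the same route as the paper: the paper likewise sets $e_1^{(\mu)}=H_y^{(M)}E_1^{(\mu)}$, notes that its source is bounded by $\kappa(1+|y|^4)^{-1}$, reruns the comparison and scaling arguments of Lemma \ref{L6.2} with this faster-decaying source to get $|e_1^{(\mu)}|\lesssim\kappa(1+|y|^2)^{-1}$ and $|\nabla_ye_1^{(\mu)}|\lesssim\kappa(1+|y|^3)^{-1}$, and then recovers $\Delta_yE_1^{(\mu)}$ from the identity $\Delta_yE_1^{(\mu)}=e_1^{(\mu)}-(1-\chi_M)VE_1^{(\mu)}$ together with Lemma \ref{L6.2}. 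Your explicit construction of the comparison function $q$ and your treatment of the cut-off commutator terms merely fill in details the paper leaves implicit.
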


\begin{proof}
We set $e_1^{(\mu)}=\Delta_yE_1^{(\mu)}+(1-\chi_M)VE_1^{(\mu)}$.
This solves \eqref{6.12}.
We note from \eqref{6.9} - \eqref{6.10} that
\[
 |H_y^{(M)}g^{(\mu)}\chi_{4R}|
 =
 |(\Delta_y+(1-\chi_M)V)g^{(\mu)}\chi_{4R}|
 \lesssim
 \frac{\kappa}{1+|y|^4}.
 \]
Therefore
the argument in the proof of Lemma \ref{L6.2} shows
 \[
 |e_1^{(\mu)}(y,s)|
 \lesssim
 \frac{\kappa(s)}{1+|y|^2}
 \qquad\text{for } (y,s)\in B_{8R}\times(0,\infty).
 \]
Since
$|\Delta_yE_1^{(\mu)}|<|e_1^{(\mu)}|+|(1-\chi_M)VE_1^{(\mu)}|$,
this implies
 \begin{align*}
 |\Delta_yE_1^{(\mu)}|
 \lesssim
 \kappa(s)\left( \frac{1}{1+|y|^2}+\frac{\log R}{1+|y|^4} \right)
 \qquad \text{for } (y,s)\in B_{8R}\times(0,\infty).
 \end{align*}
Furthermore
we obtain a gradient estimate
in exactly the same way as in the proof of Lemma \ref{L6.2}.
 \begin{align*}
 \frac{|y|}{6}|\nabla_ye_1^{(\mu)}(y,s)|
 \lesssim
 \frac{\kappa(s)}{1+|y|^2}
 \qquad\text{for } (y,s)\in(B_{6R}\setminus B_1)\times(0,\infty).
 \end{align*}
This proves the second inequality in this lemma.
\end{proof}

Next we put
 \[
 E_2^{(\mu)}=E^{(\mu)}-E_1^{(\mu)}.
 \]
The function $E_2(y,s)$ solves
 \[
 \begin{cases}
 \pa_s E_2^{(\mu)}
 =
 H_yE_2^{(\mu)}+\chi_MVE_1^{(\mu)}
 &
 \text{in } B_{8R}\times(0,\infty),
 \\
 E_2^{(\mu)}=0
 &
 \text{on } \pa B_{8R}\times(0,\infty),
 \\
 \dis
 E_2^{(\mu)}=\frac{{\sf d}_\text{in}^{(\mu)}}{\mu_1^{(8R)}}\psi_1^{(8R)}
 &
 \text{for } s=0.
 \end{cases}
 \]
We recall that
$\mu_1^{(8R)}<0$ and $\psi_1^{(8R)}\in H_0^1(B_{8R})$ are defined
in Section \ref{S3.3}.
We take
 \begin{equation}\label{6.16}
 {\sf d}_\text{in}^{(\mu)}=-\mu_1^{(8R)}\int_{0}^\infty
 e^{\mu_1^{(8R)}s'}
 (\chi_MVE_1^{(\mu)}(s'),\psi_1^{(8R)})_{L_y^2(B_{8R})}ds'
 \end{equation}
and define ${\sf c}^{(\mu)}(s)$ by
 \[
 \begin{cases}
 \dis
 \frac{d{\sf c}^{(\mu)}}{ds}
 =
 -\mu_1^{(8R)}{\sf c}^{(\mu)}
 +
 (\chi_MVE_1^{(\mu)}(s),\psi_1^{(8R)})_{L_y^2(B_{8R})},
 \\
 \dis
 {\sf c}^{(\mu)}(0)=\frac{{\sf d}_\text{in}^{(\mu)}}{\mu_1^{(8R)}}.
 \end{cases}
 \]
The function ${\sf c}^{(\mu)}(s)$ is explicitly given by
 \begin{equation}\label{6.17}
 {\sf c}^{(\mu)}(s)=-\int_s^\infty e^{-\mu_1^{(8R)}(s-s')}
 (\chi_MVE_1^{(\mu)}(s'),\psi_1^{(8R)})_{L_y^2(B_{8R})}
 ds'.
 \end{equation}
Since $\frac{d\kappa}{ds}<0$ (see \eqref{6.6}) and $\mu_1^{(8R)}<\frac{\mu_1}{2}<0$,
we get from Lemma \ref{L3.1} and Lemma \ref{L6.2} that
 \begin{align}\label{6.18}
 |{\sf c}^{(\mu)}(s)|
 &\lesssim
 \kappa(s)\log R.
 \end{align}
We decompose $E_2^{(\mu)}(y,s)$ as
 \[
 E_2^{(\mu)}=\nu^{(\mu)}+{\sf c}^{(\mu)}(s)\psi_1^{(8R)}.
 \]
The function $\nu^{(\mu)}(y,s)$ satisfies
 \begin{equation}\label{6.19}
 \begin{cases}
 \pa_s\nu^{(\mu)}
 =
 H_y\nu^{(\mu)}
 +
 V\chi_ME_1^{(\mu)}
 -
 (\chi_MVE_1^{(\mu)},\psi_1^{(8R)})_{L_y^2(B_{8R})}
 \psi_1^{(8R)}
 &
 \text{in } B_{8R}\times(0,\infty),
 \\
 \nu^{(\mu)}=0
 &
 \text{on } \pa B_{8R}\times(0,\infty),
 \\
 \nu^{(\mu)}=0
 &
 \text{for } s=0.
 \end{cases}
 \end{equation}
Since ${\sf c}^{(\mu)}(s)\in C^\infty([0,\infty))$ 
and $E_1^{(\mu)}(y,s)=0$ for $s\in(0,\mu)$,
we note that $\nu^{(\mu)}(y,s)\in C^\infty(\bar B_{8R}\times[0,\infty))$.
Repeating the argument of the convergence of $E_1^{(\mu)}(y,s)$ (see \eqref{6.13}),
we get
 \begin{equation}\label{6.20}
 \nu^{(\mu)}\to \nu \qquad \text{in the same topology as \eqref{6.13}}.
 \end{equation}

\begin{lem}\label{L6.4}
 It holds that
 \begin{align*}
 |\nabla_y^i\nu^{(\mu)}(y,s)|
 &\lesssim
 \kappa(s)
 \frac{R^4\log R}{1+|y|^{\frac{7}{2}+i}}
 \qquad{\rm for}\ (y,s)\in B_{2R}\times(0,\infty)
 \qquad (i=0,1),
 \\
 |\nabla_y^i\Delta_y\nu^{(\mu)}(y,s)|
 &\lesssim
 \kappa(s)
 \frac{R^4\log R}{1+|y|^{\frac{11}{2}+i}}
 \qquad{\rm for}\ (y,s)\in B_{2R}\times(0,\infty)
 \qquad (i=0,1).
 \end{align*}
\end{lem}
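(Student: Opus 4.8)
The plan is to follow the scheme of Lemma~4.1 of \cite{delPino}: an $L^2$ bound produced by the spectral gap of $-H_y$ on $B_{8R}$, an $L^\infty$ bound from parabolic regularity, the sharp spatial decay from a barrier argument, and finally the $\nabla_y$ and $\Delta_y$ estimates from rescaled interior parabolic estimates. Write $F^{(\mu)}:=V\chi_ME_1^{(\mu)}-(\chi_MVE_1^{(\mu)},\psi_1^{(8R)})_{L_y^2(B_{8R})}\psi_1^{(8R)}$ for the source in \eqref{6.19}. The choice of ${\sf d}_\text{in}^{(\mu)}$ and ${\sf c}^{(\mu)}$ in \eqref{6.16}--\eqref{6.17} is made precisely so that the evolution \eqref{6.19} stays off the unstable mode $\psi_1^{(8R)}$; hence the energy estimate for $\|\nu^{(\mu)}(s)\|_{L_y^2(B_{8R})}$ may use the gap $\mu_2^{(8R)}\gtrsim R^{-4}$ supplied by Lemma~\ref{L3.2} (here $n=6$), giving
\[
\frac{d}{ds}\|\nu^{(\mu)}(s)\|_{L_y^2(B_{8R})}
\le
-\mu_2^{(8R)}\|\nu^{(\mu)}(s)\|_{L_y^2(B_{8R})}
+\|F^{(\mu)}(s)\|_{L_y^2(B_{8R})}.
\]
Since $\chi_MV$ is supported in $B_{2M}$ with $M$ a fixed universal constant, since $\|\psi_1^{(8R)}\|_{L_y^2(B_{8R})}\lesssim1$ by Lemma~\ref{L3.1}, and since $|E_1^{(\mu)}|\lesssim\kappa\log R$ by Lemma~\ref{L6.2}, one has $\|F^{(\mu)}(s)\|_{L_y^2(B_{8R})}\lesssim\kappa(s)\log R$; combining this with $\mu_2^{(8R)}\gtrsim R^{-4}$ and the slowly varying property \eqref{6.6}--\eqref{6.7} of $\kappa$ yields $\|\nu^{(\mu)}(s)\|_{L_y^2(B_{8R})}\lesssim R^4\log R\,\kappa(s)$.

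Next I would upgrade this to pointwise bounds. Applying Lemma~\ref{L3.4} and Lemma~\ref{L3.5} on unit parabolic cylinders exactly as in the proof of Lemma~\ref{L6.2} — with bounded coefficients, source $\lesssim\kappa\log R$, and the $L^2$ bound above, converting time averages to $\kappa(s)$ via \eqref{6.7} — gives $|\nu^{(\mu)}(y,s)|+|\nabla_y\nu^{(\mu)}(y,s)|\lesssim R^4\log R\,\kappa(s)$ on $B_{R_0}\times(0,\infty)$ for a fixed $R_0>2M$. For the decay I would compare with $Z(y,s)=CR^4\log R\,\kappa(s)|y|^{-7/2}$. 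Since $n=6$, $\Delta_y|y|^{-7/2}=-\tfrac74|y|^{-11/2}$ (so $|y|^{-7/2}$ is superharmonic because $\tfrac72<n-2$); by \eqref{6.8} the term $V|y|^{-7/2}$ is negligible against $|y|^{-11/2}$ for $|y|>2M$, and since \eqref{6.6} gives $|\dot\kappa|/\kappa\lesssim R^{-15/4}T^{3/2}$, the term $\dot\kappa|y|^{-7/2}$ is dominated because $(|\dot\kappa|/\kappa)|y|^2\ll1$ for $|y|<8R$ thanks to $T=e^{-R}$; hence $Z$ is a supersolution of \eqref{6.19} on $\{2M<|y|<8R\}$ for $C$ large, while on that set $F^{(\mu)}=-(\chi_MVE_1^{(\mu)},\psi_1^{(8R)})_{L_y^2(B_{8R})}\psi_1^{(8R)}$ is $\lesssim\kappa\log R\,(1+|y|)^{-5/2}e^{-\sqrt{|\mu_1|}|y|}\ll\kappa\log R\,|y|^{-11/2}$ by Lemma~\ref{L3.1} and \eqref{6.8}. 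On $|y|=R_0$ we have $Z\gtrsim|\nu^{(\mu)}|$ by the interior bound, on $|y|=8R$ and at $s=0$ we have $\nu^{(\mu)}=0<Z$, so the comparison principle gives $|\nu^{(\mu)}(y,s)|\le Z(y,s)$ on $\{R_0<|y|<8R\}$; combined with the interior bound this is the $i=0$ estimate. The $i=1$ estimate follows from the rescaling argument of Lemma~\ref{L6.2} on balls of radius $|y|/6$ around $y$, where the source is $\lesssim\kappa\log R\,|y|^{-7/2}$, producing the extra factor $|y|^{-1}$.

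For the last two estimates I would control $D_y^2\nu^{(\mu)}$ and $D_y^3\nu^{(\mu)}$ by interior parabolic regularity, which then control $\Delta_y\nu^{(\mu)}$ and $\nabla_y\Delta_y\nu^{(\mu)}$. On $B_{R_0}$ the source $F^{(\mu)}$ is parabolic-H\"older with norm $\lesssim\kappa\log R$ — its spatial regularity from the gradient and Laplacian bounds on $E_1^{(\mu)}$ in Lemmas~\ref{L6.2} and \ref{L6.3}, its time regularity from parabolic regularity for $E_1^{(\mu)}$ — so Schauder estimates together with $|\nu^{(\mu)}|\lesssim R^4\log R\,\kappa$ give $|D_y^2\nu^{(\mu)}|+|D_y^3\nu^{(\mu)}|\lesssim R^4\log R\,\kappa$ there. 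For $|y|>2M$, rescaling by $|y|/6$ as before, the rescaled source is smooth and $\lesssim\kappa\log R\,|y|^{-7/2}$, so Schauder on the unit cylinder plus $\|\tilde\nu^{(\mu)}\|_{L^\infty}\lesssim R^4\log R\,\kappa\,|y|^{-7/2}$ yields $|D_Y^2\tilde\nu^{(\mu)}|+|D_Y^3\tilde\nu^{(\mu)}|\lesssim R^4\log R\,\kappa\,|y|^{-7/2}$, hence $|D_y^2\nu^{(\mu)}|\lesssim R^4\log R\,\kappa\,|y|^{-11/2}$ and $|D_y^3\nu^{(\mu)}|\lesssim R^4\log R\,\kappa\,|y|^{-13/2}$. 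Combining the two regimes gives the claimed bounds. The main obstacle is the first paragraph: one must verify that the bookkeeping of \eqref{6.16}--\eqref{6.19} genuinely removes the unstable eigenmode, so that the gap $\mu_2^{(8R)}\gtrsim R^{-4}$ is available, and then check that the ensuing loss $R^4$ is exactly absorbed by the decay $|y|^{-7/2}$; the latter rests on the two smallness inequalities $V|y|^2\ll1$ and $(|\dot\kappa|/\kappa)|y|^2\ll1$ valid for $|y|<8R$, the second being precisely where the normalization $T=e^{-R}$ is used.
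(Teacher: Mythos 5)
Your proposal follows the paper's proof essentially step for step: the orthogonality of $\nu^{(\mu)}$ to $\psi_1^{(8R)}$ built into \eqref{6.16}--\eqref{6.19} combined with the spectral gap of Lemma \ref{L3.2} gives the $L^2$ bound $\lesssim R^4\log R\,\kappa(s)$, local parabolic estimates upgrade it to $L^\infty$, the supersolution $C\kappa(s) R^4\log R\,|y|^{-7/2}$ on $B_{8R}\setminus B_{2M}$ (using \eqref{6.6}, \eqref{6.8} and $T=e^{-R}$, exactly the two smallness inequalities you single out) gives the decay, and the rescaled interior estimate gives the gradient. The only deviation is the final step, where you invoke Schauder bounds on $D_y^2\nu^{(\mu)}$ and $D_y^3\nu^{(\mu)}$ (which would additionally require verifying parabolic H\"older regularity of the source $V\chi_ME_1^{(\mu)}$ in space and time), whereas the paper differentiates \eqref{6.19} in $y_i$ and iterates the first-order scaling argument on the resulting equation for $\partial_{y_i}\nu^{(\mu)}$, needing nothing beyond the sup bounds already established; both routes yield the stated $|y|^{-11/2}$ and $|y|^{-13/2}$ decay.
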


\begin{proof}
Since $(\nu^{(\mu)}(s),\psi_1^{(8R)})_{L_y^2(B_{8R})}=0$ for $s\in(0,\infty)$,
from Lemma \ref{L3.2},
there exists $k>0$ such that
 \[
 (H_y\nu^{(\mu)}(s),\nu^{(\mu)}(s))_{L_y^2(B_{8R})}
 <
 -\frac{k}{R^4}\|\nu^{(\mu)}(s)\|_{L_y^2(B_{8R})}
 \qquad\text{for } s\in(0,\infty).
 \]
From this estimate and Lemma \ref{L6.2},
we get
 \begin{align*}
 e^{\frac{ks}{R^4}}\|\nu^{(\mu)}\|_{L_y^2(B_{8R})}^2
 &\lesssim
 R^4(\log R)^2\|\chi_MV\|_{L_y^2(B_{8R})}^2
 \int_0^se^{\frac{ks'}{R^4}}
 \kappa(s')^2
 ds'.
 \end{align*}
We compute the integral on the right-hand side.
From \eqref{6.6},
we observe that
 \begin{align*}
 \int_0^se^{\frac{ks'}{R^4}}
 \kappa(s')^2
 ds'
 &<
 \frac{R^4}{k}
 e^{\frac{ks}{R^4}}
 \kappa(s)^2
 -
 \frac{R^4}{k}
 \int_0^s
 e^{\frac{ks'}{R^4}}
 \frac{d}{ds'}
 \kappa(s')^2
 ds'
 \\
 &<
 \frac{R^4}{k}
 e^{\frac{ks}{R^4}}
 \kappa(s)^2
 +
 \frac{4R^4}{k}
 \int_0^s
 e^{\frac{ks'}{R^4}}
 \tau'^{-\frac{15}{4}}(T-t')^\frac{3}{2}
 \kappa(s')^2
 ds'
 \\
 &<
 \frac{R^4}{k}
 e^{\frac{ks}{R^4}}
 \kappa(s)^2
 +
 \frac{4R^4T^\frac{3}{2}}{k}
 \int_0^s
 e^{\frac{ks'}{R^4}}
 \kappa(s')^2
 ds'.
 \end{align*}
Since $T=e^{-R}$,
we deduce that
 \begin{align*}
 \int_0^se^{\frac{ks'}{R^4}}
 \kappa(s')^2
 ds'
 \lesssim
 R^4
 e^{\frac{ks}{R^4}}
 \kappa(s)^2.
 \end{align*}
This implies
 \[
 \|\nu^{(\mu)}(s)\|_{L_y^2(B_{8R})}
 \lesssim
 \kappa(s)R^4\log R.
 \]
Applying a local parabolic estimate in \eqref{6.19},
we get from \eqref{6.7} and Lemma \ref{L6.2} that
 \begin{align}\label{6.21}
 \|\nu^{(\mu)}(s)\|_{L_y^\infty(B_{6R})}
 &\lesssim
 \sup_{\max\{s-1,0\}<s'<s}\|\nu^{(\mu)}(s')\|_{L_y^2(B_{8R})}
 +
 \sup_{\max\{s-1,0\}<s'<s}\|VE_1(s')\|_{L_y^\infty(B_{2M})}
 \nonumber
 \\
 &\lesssim
 \kappa(s)R^4\log R.
 \end{align}
We now check that
$C_1M^\frac{7}{2}\kappa(s)R^4\log R|y|^{-\frac{7}{2}}$
gives a super solution for $y\in B_{8R}\setminus B_{2M}$ if $C_1\gg1$.
The computation is similar to the proof of
Lemma \ref{L6.2} and Lemma \ref{L6.3}.
We verify from \eqref{6.6} and \eqref{6.8} that
 \begin{align*}
 (\pa_s-H_y)
 \frac{\kappa R^4\log R}{|y|^\frac{7}{2}}
 &=
 \left(
 -2\tau^{-\frac{15}{4}}(T-t)^\frac{3}{2}
 +
 \frac{7}{4|y|^2}
 -
 V(y)
 \right)
 \frac{\kappa R^4\log R}{|y|^\frac{7}{2}}
 \\
 &>
 \left(
 -
 2\tau^{-\frac{15}{4}}T^\frac{3}{2}
 \frac{(8R)^2}{|y|^2}
 +
 \frac{7}{8|y|^2}
 \right)
 \frac{\kappa R^4\log R}{|y|^\frac{7}{2}}
  \\
 &>
 \frac{1}{2|y|^2}
 \frac{\kappa R^4\log R}{|y|^\frac{7}{2}}
 \qquad\text{for } y\in B_{8R}\setminus B_{2M}.
 \end{align*}
From Lemma \ref{L3.1} and Lemma \ref{L6.2},
the right-hand side of \eqref{6.19} is estimated as
 \begin{align*}
 \left|
 (V\chi_ME_1^{(\mu)},\psi_1^{(8R)})_{L_y^2(B_{8R})}
 \psi_1^{(8R)}(y)
 \right|
 \lesssim
 \frac{\kappa e^{-\sqrt{|\mu_1|}\cdot|y|}\log R}{|y|^\frac{5}{2}}
 \lesssim
 \frac{\kappa\log R}{|y|^\frac{7}{2}}
 \qquad\text{for } y\in B_{8R}\setminus B_{2M}.
 \end{align*}
Therefore
it holds that if $M\gg1$
 \begin{align*}
 (\pa_s-H_y)
 \frac{\kappa R^4\log R}{|y|^\frac{7}{2}}
 >
 (V\chi_ME_1,\psi_1^{(8R)})_{L_y^2(B_{8R})}
 \psi_1^{(8R)}
 \qquad\text{for } y\in B_{8R}\setminus B_{2M}.
 \end{align*}
From this estimate and \eqref{6.21},
a comparison argument shows
 \[
 |\nu^{(\mu)}(y,s)|
 <
 \frac{C_1M^\frac{7}{2}\kappa(s)R^4\log R}{|y|^\frac{7}{2}}
 \qquad\text{for } (y,s)\in B_{8R}\setminus B_{2M}\times(0,\infty).
 \]
By the same scaling argument as in the proof of Lemma \ref{L6.2},
we get
 \[
 |\nabla_y\nu^{(\mu)}(y,s)|
 \lesssim
 \frac{C_1M^\frac{7}{2}\kappa(s)R^4\log R}{|y|^\frac{9}{2}}
 \qquad\text{for } (y,s)\in B_{6R}\setminus B_{2M}\times(0,\infty).
 \]
Next we differentiate \eqref{6.19} with respect to $y_i$.
Then
$\nu_i^{(\mu)}(y,s)=\pa_{y_i}\nu^{(\mu)}(y,s)$ satisfies
 \[
 \pa_s\nu_i^{(\mu)}
 =
 H_y\nu_i^{(\mu)}
 +
 \pa_{y_i}V\cdot\nu^{(\mu)}
 -
 (\chi_MVE_1^{(\mu)},\psi_1^{(8R)})_{L_y^2(B_{8R})}
 \pa_{y_i}\psi_1^{(8R)}
 \]
for $(y,s)\in B_{8R}\setminus B_{2M}\times(0,\infty)$.
The same scaling argument as in the proof of Lemma \ref{L6.2} shows
 \begin{align*}
 |\nabla_y\nu_i^{(\mu)}(y,s)|
 &\lesssim
 \frac{CM^\frac{7}{2}\kappa(s)R^4\log R}{|y|^\frac{11}{2}}
 \qquad\text{for } (y,s)\in B_{4R}\setminus B_{2M}\times(0,\infty).
 \end{align*}
Furthermore
we repeat this procedure and obtain
 \begin{align*}
 |\nabla_y\Delta_y\nu^{(\mu)}(y,s)|
 &\lesssim
 \frac{CM^\frac{7}{2}\kappa(s)R^4\log R}{|y|^\frac{13}{2}}
 \qquad\text{for } (y,s)\in B_{2R}\setminus B_{2M}\times(0,\infty).
 \end{align*}
The proof is completed.
\end{proof}
We now put
 \[
 \epsilon^{(\mu)}
 =
 -H_yE^{(\mu)}
 =
 -H_y\left( E_1^{(\mu)}+E_2^{(\mu)} \right)
 =
 -H_y\left( E_1^{(\mu)}+\nu^{(\mu)}+{\sf c}^{(\mu)}(s)\psi_1^{(8R)} \right).
 \]
By definition of $E_1^{(\mu)}(y,s)$ and $\nu^{(\mu)}(y,s)$,
it satisfies
 \[
 \begin{cases}
 \pa_s\epsilon^{(\mu)}
 =
 H_y\epsilon^{(\mu)}
 -
 H_yg^{(\mu)}
 &
 \text{in } B_{2R}\times(0,\infty),
 \\
 \dis
 \epsilon^{(\mu)}={\sf d}_\text{in}^{(\mu)}\psi_1^{(8R)}
 &
 \text{for } s=0.
 \end{cases}
 \]
Furthermore
Lemma \ref{L6.2} - Lemma \ref{L6.4} and \eqref{6.18} imply
 \begin{align}\label{6.22}
 |\nabla_y^i\epsilon^{(\mu)}(y,s)|
 &\lesssim
 \kappa(s)\frac{R^4\log R}{1+|y|^{\frac{11}{2}+i}}
 \qquad\text{for } (y,s)\in B_{2R}\times(0,\infty)
 \qquad
 (i=0,1),
 \\
 |{\sf d}_\text{in}^{(\mu)}|
 &\lesssim
 \kappa(s)|_{s=0}\log R.
 \nonumber
 \end{align}
We note from \eqref{6.13} and \eqref{6.20} that
for any $p\in(1,\infty)$ and $s_1>0$
 \[
 \begin{array}{cl}
 \epsilon^{(\mu)}(y,t)
 \to
 \epsilon(y,t)
 & \text{in } W_p^{2,1}(B_{2R}\times(0,s_1)),
 \\[1mm]
 \nabla_y\epsilon^{(\mu)}(y,t)
 \to
 \nabla_y\epsilon(y,t)
 & \text{in } L^\infty(B_{8R}\times(0,s_1)).
 \end{array}
 \]
Therefore
since $-H_yg=G$ in $B_{2R}\times(0,\infty)$,
we conclude that
$\epsilon(y,t)$ gives a solution of
 \[
 \begin{cases}
 \pa_s\epsilon
 =
 H_y\epsilon
 +
 G(\lambda,\tilde{w})
 &
 \text{in } B_{2R}\times(0,\infty),
 \\
 \dis
 \epsilon={\sf d}_\text{in}\psi_1^{(8R)}
 &
 \text{for } s=0.
 \end{cases}
 \]
Furthermore
from \eqref{6.16} - \eqref{6.17} and \eqref{6.22},
it holds that
 \begin{align}\label{6.23}
 |\nabla_y^i\epsilon(y,s)|
 &\lesssim
 \kappa(s)\frac{R^4\log R}{1+|y|^{\frac{11}{2}+i}}
 \qquad\text{for } (y,s)\in B_{2R}\times(0,\infty)
 \qquad
 (i=0,1),
 \\
 |{\sf d}_\text{in}|
 &\lesssim
 \kappa(s)|_{s=0}\log R.
 \nonumber
 \end{align}

\section{Outer solution}
\label{S7}
In this section,
we solve a nonlinear problem (see \eqref{5.8}).
\begin{align*}
 W_t
 &=
 \Delta_xW
 +
 f'(-\Theta\chi_2)W
 \underbrace{
 +
 (1-\chi_\text{in})\frac{G(\lambda,\tilde w)}{\lambda^4}
 -
 \frac{V\Theta}{\lambda^2}\chi_2
 +
 \frac{\lambda\dot\lambda}{\lambda^4}
 (\Lambda_y{\sf Q})\chi_2}
 \\
 &\qquad
 \underbrace{
 +
 \frac{\lambda\dot\lambda}{\lambda^4}
 (\Lambda_y\epsilon)\chi_\text{in}
 +
 h_\text{in}
 }_{=F_\text{out}}
 +
 \mu
 +
 g_\text{out}
 +
 N_\text{out}
 \qquad
 \text{in } \R^6\times(0,T).
 \nonumber
 \end{align*}
For convenience,
we rewrite definition of each term on the right-hand side.
\begin{align*}
 G(\lambda,\tilde w)
 &=
 \left(
 \lambda\dot\lambda-\lambda_0^{-2}\lambda^2\sigma
 \right)
 (\Lambda_y{\sf Q})\chi_1
 +
 \lambda^2V\tilde w(x,t),
 \\
 h_\text{in}
 &=
 \lambda^{-4}
 \left(
 2\nabla_y\epsilon\cdot\nabla_y\chi_\text{in}+\epsilon\Delta_y\chi_\text{in}
 -
 \lambda^2\epsilon\pa_t\chi_\text{in}
 \right),
 \\
 \mu
 &=
 \alpha\tau^{-2}e_1
 \left( 1+\alpha\tau^{-1}e_1 \right)^{-2}
 e^{2\tau}
 -
 2\alpha^2\tau^{-2}|\nabla_ze_1|^2
 \left( 1+\alpha\tau^{-1}e_1 \right)^{-3}
 e^{2\tau},
 \\
 g_\text{out}
 &=
 (-\mu\chi_1)+(-g_0)+(-g_1)+g_2+g_3+g_4+g_5,
 \\[1mm]
 N_\text{out}
 &=
 f(\lambda^{-2}{\sf Q}+\lambda_0^{-2}\sigma T_1\chi_1
 -\Theta\chi_2+\lambda^{-2}\epsilon\chi_\text{in}+\tilde w)
 -
 f(-\Theta\chi_2)
 \\
 &\qquad
 -
 f'(-\Theta\chi_2)
 \left( \lambda^{-2}{\sf Q}+\lambda_0^{-2}\sigma T_1\chi_1
 +\lambda^{-2}\epsilon\chi_\text{in}+\tilde w \right)
 \\
 &\qquad
 -
 f(\lambda^{-2}{\sf Q})
 -
 f'(\lambda^{-2}{\sf Q})
 \left(
 \lambda_0^{-2}\sigma T_1\chi_1-\Theta\chi_2+\lambda^{-2}\epsilon\chi_\text{in}+\tilde w
 \right),
 \\
 g_0
 &=
 -\frac{\sigma}{\lambda_0^2}
 \left( 2\lambda_0^{-1}\dot\lambda_0
 +
 \lambda^{-1}\dot\lambda y\cdot\nabla_y
 \right)T_1
 \cdot\chi_1
 +
 \frac{\dot\sigma T_1}{\lambda_0^2}\chi_1
 +
 \frac{\sigma T_1}{\lambda_0^2}\pa_t\chi_1,
 \\
 g_1
 &=
 -\Theta\pa_t\chi_2,
 \\
 g_2
 &=
 \frac{2\sigma}{\lambda_0^2\lambda}\nabla_yT_1\cdot\nabla_x\chi_1
 +
 \frac{\sigma T_1}{\lambda_0^2}\Delta_x\chi_1,
 \\
 g_3
 &=
 -2\nabla_x\Theta\cdot\nabla_x\chi_2
 -
 \Theta\Delta_x\chi_2,
 \\
 g_4
 &=
 -f(-\Theta)\chi_2
 +
 f(-\Theta\chi_2),
 \\
 g_5
 &=
 f'(-\Theta\chi_2)
 \left( \lambda^{-2}{\sf Q}+\lambda_0^{-2}\sigma T_1\chi_1
 +\lambda^{-2}\epsilon\chi_\text{in} \right)
 \end{align*}
and
 \[
 \chi_1
 =
 \chi\left( \tau|z| \right), \qquad
 \chi_2
 =
 1-\chi_1, \qquad
 \chi_{\text{in}}
 =
 \chi\left( \frac{|y|}{R} \right).
 \]
A goal of this section is to show $W(x,t)\in X_\delta$ (see Section \ref{S5.2}).
We introduce a selfsimilar transformation.
 \[
 W(x,t)=e^\tau\varphi(z,\tau),
 \qquad
 z=\frac{x}{\sqrt{T-t}},
 \qquad
 T-t=e^{-\tau}.
 \]
The function $\varphi(z,\tau)$ solves
 \begin{equation}\label{7.1}
 \varphi_\tau
 =
 A_z\varphi
 +
 \left(
 1-\frac{2\alpha\tau^{-1}e_1+2\chi_1}{1+\alpha\tau^{-1}e_1}
 \right)
 \varphi
 +
 e^{-2\tau}
 \left(
 F_\text{out}
 +
 \mu
 +
 g_\text{out}
 +
 N_\text{out}
 \right)
 \qquad\text{in } \R^6\times(\tau_0,\infty),
 \end{equation}
where $\tau_0=-\log T$.
Throughout this section,
we assume that
$\lambda(t)$ and $\epsilon(y,t)$ are functions obtained in Section {\rm\ref{S6}}.

\subsection{Estimates of $F_\text{out}$, $g_\text{out}$ and $N_\text{out}$}
\label{S7.1}
Let ${\bf 1}_{z\in\Omega}(z)$ be a indicator function defined by
${\bf 1}_{z\in\Omega}(z)=1$ if $z\in\Omega$
and
${\bf 1}_{z\in\Omega}(z)=0$ if $z\not\in\Omega$.

 \begin{lem}\label{L7.1}
 It holds that
 \[
 |F_\text{\rm out}|
 \lesssim
 \begin{cases}
 \dis
 \frac{e^\tau}{\tau^\frac{3}{2}\lambda^2}
 \left(
 \frac{e^{-\tau}{\bf 1}_{|y|<2R}}{1+|y|^\frac{11}{2}}
 +
 \frac{\log |y|}{1+|y|^\frac{7}{2}}
 {\bf 1}_{R<|y|<2R}
 +
 \frac{{\bf 1}_{|y|>R}}{1+|y|^4}
 \right)
 +
 \tau^\frac{1}{4}e^{\frac{\tau}{2}}
 {\bf 1}_{\frac{1}{\tau}<|z|<\frac{2}{\tau}}
 &
 \text{\rm for}\ |z|<1,
 \\[4mm]
 \dis
 \tau^{-\frac{21}{4}}e^{\frac{\tau}{2}}\frac{1}{|z|^2}
 +
 \tau^{-\frac{15}{4}}e^{\frac{\tau}{2}}\frac{1}{|z|^4} &
 \text{\rm for}\ |z|>1.
 \end{cases}
 \]
 \end{lem}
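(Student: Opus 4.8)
The plan is to expand $F_\text{out}$ into the terms appearing in its definition and to bound each separately, in the two regimes $|z|<1$ and $|z|>1$. The inputs I would use are the control of $\lambda$ from Lemma~\ref{L6.1} and \eqref{6.3}, the pointwise bounds on $\epsilon$ from \eqref{6.23}, the bound $|\tilde w|\le{\cal W}$ from \eqref{5.9}--\eqref{5.10}, and the elementary estimates $0<V(y)\lesssim(1+|y|)^{-4}$, $|\Lambda_y{\sf Q}(y)|\lesssim(1+|y|)^{-4}$, $0<\Theta(x,t)\lesssim e^\tau$; throughout one converts between $|y|=|x|/\lambda$ and $|z|=|x|/\sqrt{T-t}$ using $\lambda=(1+o(1))(T-t)^{5/4}\tau^{-15/8}$, which gives $(1+|y|)^{-4}\sim\lambda^4(T-t)^{-2}|z|^{-4}$ when $|y|\gg1$. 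Writing $G(\lambda,\tilde w)=(\lambda\dot\lambda-(\lambda/\lambda_0)^2\sigma)(\Lambda_y{\sf Q})\chi_1+\lambda^2V\tilde w$, the five pieces of $F_\text{out}$ (with their supports) are: $(1-\chi_\text{in})\lambda^{-2}V\tilde w$ on $\{|y|>R\}$; $(1-\chi_\text{in})\lambda^{-4}(\lambda\dot\lambda-(\lambda/\lambda_0)^2\sigma)(\Lambda_y{\sf Q})\chi_1$ on $\{|y|>R\}\cap\{|z|<2/\tau\}$; the combination $-\lambda^{-2}V\Theta\chi_2+\lambda^{-2}(\dot\lambda/\lambda)(\Lambda_y{\sf Q})\chi_2$ on $\{|z|>1/\tau\}$; $\lambda^{-2}(\dot\lambda/\lambda)(\Lambda_y\epsilon)\chi_\text{in}$ on $\{|y|<2R\}$; and $h_\text{in}$ on $\{R<|y|<2R\}$.

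I would estimate the first piece by $|V\tilde w|\lesssim(1+|y|)^{-4}{\cal W}$: on $\{|z|<1\}$, where ${\cal W}\lesssim e^\tau\tau^{-3/2}$, it contributes $\lesssim\frac{e^\tau}{\tau^{3/2}\lambda^2}\frac{{\bf 1}_{|y|>R}}{1+|y|^4}$; on $\{1<|z|<K\sqrt\tau\}$ one has ${\cal W}\lesssim e^\tau\tau^{-3/2}|z|^2$ and on $\{|z|>K\sqrt\tau\}$ one has ${\cal W}\lesssim K^{17/8}e^\tau\tau^{-7/16}|z|^{-1/8}$, and after inserting $\lambda^2\sim(T-t)^{5/2}\tau^{-15/4}$ both of these give $\lesssim\tau^{-21/4}e^{\tau/2}|z|^{-2}$ (in the far range using $|z|^{-1/8-4}\le(K\sqrt\tau)^{-17/8}|z|^{-2}$, so that the powers of $K$ cancel). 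For the second piece, Lemma~\ref{L6.1} gives $|\lambda\dot\lambda-(\lambda/\lambda_0)^2\sigma|\lesssim\lambda_0^2\tau^{-3/2}(T-t)^{-1}$ with $\lambda\sim\lambda_0$, so with $|\Lambda_y{\sf Q}|\lesssim(1+|y|)^{-4}$ it is $\lesssim\frac{e^\tau}{\tau^{3/2}\lambda^2}\frac{{\bf 1}_{|y|>R}}{1+|y|^4}$, supported in $\{|z|<2/\tau\}$, hence in the $|z|<1$ branch. For the fourth piece and $h_\text{in}$ I would feed in \eqref{6.23}, $|\nabla_y^i\epsilon|\lesssim\kappa R^4(\log R)(1+|y|)^{-11/2-i}$ with $\kappa(t)=\lambda^2\tau^{-3/2}(T-t)^{-1}$ from \eqref{6.4}, together with $|\dot\lambda/\lambda|\lesssim e^\tau$: since $|\Lambda_y\epsilon|\lesssim|\epsilon|+|y||\nabla_y\epsilon|\lesssim\kappa R^4(\log R)(1+|y|)^{-11/2}$, the fourth piece is $\lesssim\frac{e^{2\tau}R^4\log R}{\tau^{3/2}}\frac{{\bf 1}_{|y|<2R}}{1+|y|^{11/2}}$, which is dominated by $\frac{e^\tau}{\tau^{3/2}\lambda^2}\frac{e^{-\tau}{\bf 1}_{|y|<2R}}{1+|y|^{11/2}}$ once one uses $\lambda^{-2}=(T-t)^{-5/2}\tau^{15/4}$ and $\tau\ge R=-\log T$ to get $R^4\log R\lesssim e^{\tau/2}\tau^{15/4}$; the gradient, Laplacian and $\partial_t$ parts of $h_\text{in}$, all localized to $|y|\sim R$ with an extra factor $R^{-1}$, $R^{-2}$, or $\lambda|y||\dot\lambda|/R$, collapse in the same way to $\lesssim\frac{e^\tau}{\tau^{3/2}\lambda^2}\frac{\log|y|}{1+|y|^{7/2}}{\bf 1}_{R<|y|<2R}$.

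The delicate term, and the one carrying the heaviest weight, is the combination $-\lambda^{-2}V\Theta\chi_2+\lambda^{-2}(\dot\lambda/\lambda)(\Lambda_y{\sf Q})\chi_2$. Using $V,|\Lambda_y{\sf Q}|\lesssim(1+|y|)^{-4}$, $\Theta\lesssim e^\tau$ and $|\dot\lambda/\lambda|\lesssim e^\tau$, it is $\lesssim\frac{e^\tau}{\lambda^2(1+|y|)^4}\chi_2$, and converting to $z$ (so that $\frac{e^\tau}{\lambda^2(1+|y|)^4}\sim\lambda^2(T-t)^{-3}|z|^{-4}\sim e^{\tau/2}\tau^{-15/4}|z|^{-4}$) this is $\lesssim\tau^{-15/4}e^{\tau/2}|z|^{-4}\chi_2$. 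On $\{|z|>1\}$ this is exactly the second term of the stated bound; on the support of $\chi_2$ inside $\{|z|<1\}$, where $|z|\gtrsim1/\tau$ and hence $|z|^{-4}\lesssim\tau^4$, it is $\lesssim\tau^{1/4}e^{\tau/2}$, which accounts for the last (indicator) term, while on $2/\tau<|z|<1$ one retains the sharper $\tau^{-15/4}e^{\tau/2}|z|^{-4}$. The main obstacle will be the bookkeeping rather than any single estimate: one must carry each power of $\tau$ through the two rescalings $y\leftrightarrow x\leftrightarrow z$ and check that every piece falls under the correct branch of the claimed bound, the borderline exponents $\tau^{-15/4}$ and $\tau^{1/4}$ coming solely from the $V\Theta\chi_2$/$(\Lambda_y{\sf Q})\chi_2$ combination in the selfsimilar zone, whereas every other piece carries an extra $\tau^{-3/2}$ or the fast decay $|y|^{-11/2}$, $|y|^{-7/2}$ of the inner corrections. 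Collecting the five contributions then yields the lemma.
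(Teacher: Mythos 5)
Your proposal is correct and follows essentially the same route as the paper: the same term-by-term decomposition of $F_{\rm out}$, the same inputs (Lemma \ref{L6.1}, \eqref{5.9}--\eqref{5.10}, \eqref{6.23}, the decay of $V$ and $\Lambda_y{\sf Q}$), and the same $y\leftrightarrow z$ conversions, with only the immaterial difference that you split $G(\lambda,\tilde w)$ into its two summands where the paper bounds it as a whole. One shared caveat: like the paper's own proof, your estimate of the $V\Theta\chi_2/(\Lambda_y{\sf Q})\chi_2$ combination actually yields $\tau^{1/4}e^{\tau/2}{\bf 1}_{1/\tau<|z|<1}$ (equivalently $\tau^{-15/4}e^{\tau/2}|z|^{-4}$ on $2/\tau<|z|<1$, which is \emph{not} absorbed by the $\tau^{-21/4}e^{\tau/2}|z|^{-4}$-sized first branch term there), so the indicator in the stated bound should read ${\bf 1}_{1/\tau<|z|<1}$ rather than ${\bf 1}_{1/\tau<|z|<2/\tau}$ — a harmless discrepancy for the later applications.
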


 \begin{proof}
Since
$|G(\lambda,\tilde w)|\lesssim
 \frac{\lambda^2\tau^{-\frac{3}{2}}e^\tau+\lambda^2|\tilde w|}{1+|y|^4}$
(see \eqref{6.3}) and $|w(x,t)|<{\cal W}(x,t)$ (see \eqref{5.9} - \eqref{5.10}),
the first term in $F_\text{out}$ is estimated as
 \begin{align*}
 (1-\chi_\text{in})\frac{|G(\lambda,\tilde w)|}{\lambda^4}
 &\lesssim
 \left(
 \frac{e^\tau}{\tau^\frac{3}{2}\lambda^2}
 \frac{1}{|y|^4}
 +
 \frac{e^\tau}{\tau^\frac{3}{2}\lambda^2}
 \frac{1+|z|^2}{|y|^4}
 {\bf 1}_{|z|<K\sqrt\tau}
 +
 \frac{e^\tau}{\tau^\frac{7}{16}\lambda^2}
 \frac{1}{|y|^4}
 \frac{{\bf 1}_{|z|>K\sqrt\tau}}{|z|^\frac{1}{8}}
 \right)
 {\bf 1}_{|y|>R}
 \nonumber
 \\
 &\lesssim
 \frac{e^\tau}{\tau^\frac{3}{2}\lambda^2}
 \frac{{\bf 1}_{|y|>R}}{|y|^4}
 {\bf 1}_{|z|<1}
 +
 \frac{e^{3\tau}\lambda^2}{\tau^\frac{3}{2}}
 \frac{{\bf 1}_{1<|z|<K\sqrt\tau}}{|z|^2}
 +
 \frac{e^{3\tau}\lambda^2}{\tau^\frac{7}{16}}
 \frac{{\bf 1}_{|z|>K\sqrt\tau}}{|z|^\frac{33}{8}}
 \\
 &\lesssim
 \frac{e^\tau}{\tau^\frac{3}{2}\lambda^2}
 \frac{{\bf 1}_{|y|>R}}{|y|^4}
 {\bf 1}_{|z|<1}
 +
 \frac{e^{\frac{\tau}{2}}}{\tau^\frac{21}{4}}
 \frac{{\bf 1}_{1<|z|<K\sqrt\tau}}{|z|^2}
 +
 K^{-\frac{1}{8}}
 \frac{e^{\frac{\tau}{2}}}{\tau^\frac{17}{4}}
 \frac{{\bf 1}_{|z|>K\sqrt\tau}}{|z|^4}.
 \end{align*}
We next compute the second and the third term.
 \begin{align*}
 \frac{V\Theta}{\lambda^2}\chi_2
 +
 \frac{|\lambda\dot\lambda|}{\lambda^4}|\Lambda_y{\sf Q}|\chi_2
 &\lesssim
 \frac{e^\tau}{\lambda^2}\frac{1}{|y|^4}{\bf 1}_{|z|>\frac{1}{\tau}}
 \lesssim
 \tau^4e^{3\tau}\lambda^2{\bf 1}_{\frac{1}{\tau}<|z|<1}
 +
 e^{3\tau}\lambda^2
 \frac{{\bf 1}_{|z|>1}}{|z|^4}
 \nonumber
 \\
 &\lesssim
 \tau^\frac{1}{4}e^{\frac{\tau}{2}}
 {\bf 1}_{\frac{1}{\tau}<|z|<1}
 +
 \frac{e^{\frac{\tau}{2}}}{\tau^\frac{15}{4}}
 \frac{{\bf 1}_{|z|>1}}{|z|^4}.
 \end{align*}
As for the fourth  and the fifth term,
we get from \eqref{6.23} that
 \begin{align*}
 \frac{|\lambda\dot\lambda|}{\lambda^4}|\Lambda_y\epsilon|\chi_\text{in}
 &\lesssim
 \frac{e^\tau\kappa}{\lambda^2}
 \frac{R^4\log R}{1+|y|^\frac{11}{2}}
 {\bf 1}_{|y|<2R}
 \lesssim
 \frac{e^\tau}{\tau^\frac{3}{2}\lambda^2}
 \frac{e^{-\frac{3}{2}\tau}\tau^{-\frac{15}{4}}R^4\log R}{1+|y|^\frac{11}{2}}
 {\bf 1}_{|y|<2R}
 \nonumber
 \\
 &\lesssim
 \frac{e^\tau}{\tau^\frac{3}{2}\lambda^2}
 \frac{e^{-\tau}}{1+|y|^\frac{11}{2}}
 {\bf 1}_{|y|<2R},
 \\[1mm]
 |h_\text{in}|
 &\lesssim
 \frac{1}{\lambda^4}
 \left(
 \frac{|\nabla_y\epsilon|}{R}
 +
 \frac{|\epsilon|}{R^2}
 +
 |\lambda\dot\lambda\epsilon|
 \right)
 {\bf 1}_{R<|y|<2R}
 \\
 &\lesssim
 \frac{e^\tau}{\tau^\frac{3}{2}\lambda^2}
 \left(
 \frac{\log R}{R^\frac{7}{2}}
 +
 \frac{\log R}{R^\frac{7}{2}}
 +
 \frac{e^{-\frac{3}{2}\tau}}{\tau^\frac{15}{4}}
 \frac{\log R}{R^\frac{3}{2}}
 \right) 
 {\bf 1}_{R<|y|<2R}.
 \end{align*}
The proof is completed.
\end{proof}

 \begin{lem}\label{L7.2}
 It holds that
 \[
 |g_\text{\rm out}|
 \lesssim
 \begin{cases}
 e^{2\tau}
 {\bf 1}_{|z|<\frac{2}{\tau}}
 +
 \tau^\frac{1}{4}e^{\frac{\tau}{2}}
 {\bf 1}_{\frac{1}{\tau}<|z|<1}
 &
 \text{\rm for}\ |z|<1,
 \\[2mm]
 \dis
 \tau^{-\frac{15}{4}}
 e^{\frac{\tau}{2}}
 \frac{1}{|z|^4}
 &
 \text{\rm for}\ |z|>1.
 \end{cases}
 \]
 \end{lem}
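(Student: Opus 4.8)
The plan is to estimate the seven constituents $-\mu\chi_1$, $-g_0$, $-g_1$, $g_2$, $g_3$, $g_4$, $g_5$ of $g_{\rm out}$ one at a time, always exploiting the supports of the cut off functions: $\chi_1=\eta(\tau|z|)$ is supported in $\{|z|<2/\tau\}$; the derivatives $\pa_t\chi_1$, $\nabla_x\chi_1$, $\Delta_x\chi_1$ (and the corresponding derivatives of $\chi_2=1-\chi_1$) are supported in the thin annulus $\{1/\tau<|z|<2/\tau\}$; and $\chi_{\rm in}=\eta(|y|/R)$ is supported in $\{|y|<2R\}$, hence in $\{|z|<1/\tau\}$ since $R$ is large. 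The raw material is a short list of elementary asymptotics. From \eqref{5.2} and \eqref{6.3}: $\sigma\lambda_0^{-2}=-(\tfrac54+\tfrac{15}{8\tau})(T-t)^{-1}$, $\dot\lambda_0/\lambda_0,\ \dot\lambda/\lambda\sim-\tfrac54(T-t)^{-1}$, $\dot\sigma\lambda_0^{-2}\sim\tfrac{15}{8}(T-t)^{-2}$. From \eqref{4.2}--\eqref{4.3}: $T_1(y)=\tfrac45+O(|y|^{-2})$, $\nabla_yT_1=O(|y|^{-3})$ for large $|y|$. From \eqref{4.5}: $\Theta\lesssim(T-t)^{-1}$ everywhere and $|\nabla_x\Theta|\lesssim(T-t)^{-3/2}\tau^{-2}$ on the annulus (where $|z|\ll1$). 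Finally $|\pa_t\chi_1|\lesssim(T-t)^{-1}$, $|\nabla_x\chi_1|\lesssim\tau(T-t)^{-1/2}$, $|\Delta_x\chi_1|\lesssim\tau^2(T-t)^{-1}$ on the annulus, and $\kappa=\lambda^2\tau^{-3/2}(T-t)^{-1}$, so \eqref{6.23} gives $\lambda^{-2}|\epsilon|\lesssim\tau^{-3/2}(T-t)^{-1}R^4\log R$ on $\{|y|<2R\}$.

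The single place where a genuine cancellation enters is the matching estimate
\[
\bigl|\lambda_0^{-2}\sigma\,T_1(y)+\Theta(x,t)\bigr|\lesssim\tau^{-2}(T-t)^{-1}
\qquad\text{on }\{1/\tau<|z|<2/\tau\}.
\]
I would obtain it by expanding both terms to second order in $\tau^{-1}$. On the annulus $|y|=|z|(T-t)^{-3/4}\tau^{15/8}\gtrsim(T-t)^{-3/4}\tau^{7/8}\to\infty$, so $T_1(y)=\tfrac45+O((T-t)^{3/2}\tau^{-7/4})$ and $\lambda_0^{-2}\sigma T_1=-(1+\tfrac{3}{2\tau})(T-t)^{-1}+O((T-t)^{1/2}\tau^{-7/4})$; on the other hand $|z|^2\lesssim\tau^{-2}$ and ${\sf c}_1\alpha=\tfrac18$ (by \eqref{3.5} and \eqref{4.6}) give $\tfrac\alpha\tau e_1(z)=-\tfrac{3}{2\tau}+O(\tau^{-3})$, whence $\Theta=(T-t)^{-1}(1+\tfrac{3}{2\tau}+\tfrac{9}{4\tau^2}+O(\tau^{-3}))$; the two leading terms cancel in the sum. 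This is exactly why \eqref{5.1} uses $\lambda_0$ rather than $\lambda$ in the $T_1$--term: with $\lambda$ one would only get $\lambda^{-2}\sigma T_1+\Theta=O((T-t)^{-1}\tau^{-1/2})$ from Lemma \ref{L6.1}, which is too crude below.

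The two ranges are then handled separately. For $|z|>1$ everything except $g_5$ vanishes ($\chi_1=0$, $\chi_2=1$, $\chi_{\rm in}=0$), and $g_5=f'(-\Theta)\lambda^{-2}{\sf Q}(y)=2\Theta\lambda^{-2}{\sf Q}(y)$; using $\Theta\lesssim(T-t)^{-1}$, ${\sf Q}(y)\lesssim(1+|y|)^{-4}$, $|y|=|z|(T-t)^{-3/4}\tau^{15/8}$ and $\lambda^{-2}=(T-t)^{-5/2}\tau^{15/4}$ yields directly $g_5\lesssim\tau^{-15/4}(T-t)^{-1/2}|z|^{-4}$. For $|z|<1$: $|\mu\chi_1|\lesssim\tau^{-2}(T-t)^{-2}{\bf 1}_{|z|<2/\tau}$, since $e_1,\nabla_ze_1$ are bounded and $1+\tfrac\alpha\tau e_1\sim1$ there; the terms of $-g_0$ not carrying a derivative of $\chi_1$ are each $\lesssim(T-t)^{-2}{\bf 1}_{|z|<2/\tau}$ by the asymptotics above (the $y\cdot\nabla_yT_1$ factor being bounded); the derivative-of-cut off terms are regrouped by pairing the $\pa_t\chi_1$--term of $-g_0$ with $-g_1$ and the $\nabla_x\chi_1$-- and $\Delta_x\chi_1$--terms of $g_2$ with $g_3$, so they become $-(\lambda_0^{-2}\sigma T_1+\Theta)\pa_t\chi_1$, $2\nabla_x(\lambda_0^{-2}\sigma T_1+\Theta)\cdot\nabla_x\chi_1$ and $(\lambda_0^{-2}\sigma T_1+\Theta)\Delta_x\chi_1$, each $\lesssim(T-t)^{-2}{\bf 1}_{1/\tau<|z|<2/\tau}$ by the matching estimate together with $|\nabla_x\Theta|\lesssim(T-t)^{-3/2}\tau^{-2}$ and the fact that $\nabla_x(\lambda_0^{-2}\sigma T_1)=\lambda_0^{-2}\sigma\lambda^{-1}\nabla_yT_1$ is super-exponentially small on the annulus; $|g_4|=\Theta^p|\chi_2-\chi_2^p|\lesssim(T-t)^{-2}{\bf 1}_{1/\tau<|z|<2/\tau}$; and $g_5$ vanishes for $|z|<1/\tau$ (there $\chi_2=0$, so $f'(-\Theta\chi_2)=0$, which in particular kills the $\chi_{\rm in}$--piece of $g_5$), while on $\{1/\tau<|z|<2/\tau\}$ it is at most $f'(-\Theta\chi_2)\lambda_0^{-2}\sigma T_1\chi_1\lesssim(T-t)^{-2}$ plus $f'(-\Theta\chi_2)\lambda^{-2}{\sf Q}(y)\lesssim(T-t)^{-1/2}\tau^{-15/4}|z|^{-4}\lesssim\tau^{1/4}(T-t)^{-1/2}$, using $|z|^{-4}\le\tau^4$. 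Adding these up and writing $(T-t)^{-1}=e^\tau$ gives the asserted bounds.

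The main obstacle is the pairing of the $\Delta_x\chi_1$ contributions: taken individually, $\lambda_0^{-2}\sigma T_1\Delta_x\chi_1$ (inside $g_2$) and $-\Theta\Delta_x\chi_2$ (inside $g_3$) are both of size $\tau^2(T-t)^{-2}$ on the annulus -- a full factor $\tau^2$ above the claimed bound -- and only after summing them and invoking the sharp matching estimate $|\lambda_0^{-2}\sigma T_1+\Theta|\lesssim\tau^{-2}(T-t)^{-1}$ does one recover the size $(T-t)^{-2}$. Everything else is bookkeeping with the explicit asymptotics of $\lambda_0$, $\sigma$, $\Theta$, $T_1$ and the supports of the cut offs.
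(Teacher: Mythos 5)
Your proposal is correct and follows essentially the same route as the paper's proof: the same term-by-term decomposition, the same pairing of the cut-off--derivative terms into $(\lambda_0^{-2}\sigma T_1+\Theta)\pa_t\chi_1$, $(\lambda_0^{-2}\sigma T_1+\Theta)\Delta_x\chi_1$, etc., and the same matching cancellation $|\lambda_0^{-2}\sigma T_1+\Theta|\lesssim\tau^{-2}e^{\tau}$ on the annulus, which is indeed the one place where the choice of $\lambda_0$ over $\lambda$ in \eqref{5.1} is essential. Two harmless slips: $\nabla_x(\lambda_0^{-2}\sigma T_1)=\lambda_0^{-2}\sigma\lambda^{-1}\nabla_yT_1$ is of size $\tau^{-3/4}$ on the annulus (not super-exponentially small), which is still far below what is needed after multiplying by $|\nabla_x\chi_1|\lesssim\tau e^{\tau/2}$; and the ${\sf Q}$-part of $g_5$ is supported on all of $\{|z|>1/\tau\}$ rather than only on $\{1/\tau<|z|<2/\tau\}$, but the bound $f'(-\Theta\chi_2)\lambda^{-2}{\sf Q}\lesssim\tau^{-15/4}e^{\tau/2}|z|^{-4}\lesssim\tau^{1/4}e^{\tau/2}$ you give covers the whole range $1/\tau<|z|<1$.
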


\begin{proof}
The first term is estimated as
 \begin{align*}
 |\mu|\chi_1
 &\lesssim
 \tau^{-2}e^{2\tau}
 {\bf 1}_{|z|<\frac{2}{\tau}}.
 \end{align*}
By a direct computation,
we get
 \begin{align*}
 |(-g_0)&+(-g_1)+g_2+g_3|
 \lesssim
 \left|
 \frac{\sigma}{\lambda_0^2}
 \left( \frac{2\dot\lambda_0}{\lambda_0}
 +
 \frac{\dot\lambda}{\lambda}y\cdot\nabla_y
 \right)T_1
 \cdot\chi_1
 -
 \frac{\dot\sigma T_1}{\lambda_0^2}\chi_1
 -
 \frac{\sigma T_1}{\lambda_0^2}\pa_t\chi_1
 \right.
 \nonumber
 \\
 &
 \left.
 +\Theta\pa_t\chi_2
 +
 \frac{2\sigma}{\lambda_0^2\lambda}\nabla_yT_1\cdot\nabla_x\chi_1
 +
 \frac{\sigma T_1}{\lambda_0^2}\Delta_x\chi_1
 -
 2\nabla_x\Theta\cdot\nabla_x\chi_2
 -
 \Theta\Delta_x\chi_2
 \right|
 \nonumber
 \\
 &\lesssim
 e^{2\tau}
 {\bf 1}_{|z|<\frac{2}{\tau}}
 +
 \left|
 \frac{\sigma T_1}{\lambda_0^2}
 +
 \Theta
 \right|
 |\pa_t\chi_1|
 +
 \left|
 \frac{2\sigma}{\lambda_0^2\lambda}\nabla_yT_1
 +
 2\nabla_x\Theta
 \right|
 |\nabla_x\chi_1|
 +
 \left|
 \frac{\sigma T_1}{\lambda_0^2}
 +
 \Theta
 \right|
 |\Delta_x\chi_1|.
 \end{align*}
From \eqref{4.2} - \eqref{4.3}, \eqref{4.7} and \eqref{5.2},
we verify that
 \begin{align*}
 \left|
 \frac{\sigma T_1}{\lambda_0^2}+\Theta
 \right|
 &=
 \left|
 -\left( \frac{4}{5}+O\left( \frac{1}{|y|^2} \right) \right)
 \left( \frac{5}{4}+\frac{15}{8\tau} \right)
 e^\tau
 +
 \left(
 1+\frac{3}{2\tau}
 +
 O\left( \frac{|z|^2}{\tau} \right)
 +
 O\left( \frac{1}{\tau^2} \right)
 \right)e^\tau
 \right|
 \\
 &=
 \left|
 O\left( \frac{1}{|y|^2} \right)
  +
 O\left( \frac{|z|^2}{\tau} \right)
 +
 O\left( \frac{1}{\tau^2} \right)
 \right|
 e^\tau
 \lesssim
 \tau^{-2}e^\tau
 \qquad\text{for } \tau^{-1}<|z|<2\tau^{-1},
 \\
 \frac{\sigma}{\lambda^3}|\nabla_yT_1|
 &\lesssim
 \frac{\sigma}{\lambda^3}
 \frac{1}{|y|^3}
 \lesssim
 \tau^3e^{\frac{3}{2}\tau}\sigma
 =
 \tau^{-\frac{3}{4}}
 \qquad\text{for } \tau^{-1}<|z|<2\tau^{-1},
 \\
 |\nabla_x\Theta|
 &\lesssim
 \tau^{-1}e^{\frac{3}{2}\tau}|\nabla_ze_1|
 \lesssim
 \tau^{-1}e^{\frac{3}{2}\tau}|z|
 \lesssim
 \tau^{-2}e^{\frac{3}{2}\tau}
 \qquad\text{for } \tau^{-1}<|z|<2\tau^{-1}.
 \end{align*}
Since $|\pa_t\chi_1|\lesssim e^\tau$,
$|\nabla_x\chi_1|\lesssim \tau e^{\frac{\tau}{2}}$ and
$|\Delta_x\chi_1|\lesssim \tau^2e^\tau$,
combining these estimates,
we obtain
 \begin{align*}
 |(-g_0)+(-g_1)+g_2+g_3|
 \lesssim
 e^{2\tau}
 {\bf 1}_{|z|<\frac{2}{\tau}}.
 \end{align*}
The term $g_4$ is easily estimated as
 \[
 |g_4|\lesssim
 e^{2\tau}{\bf 1}_{|z|<\frac{2}{\tau}}.
 \]
We finally estimate $g_5$.
Since $\chi_\text{2}\chi_\text{in}=0$,
we see that
 \begin{align*}
 |g_5|
 &\lesssim
 \Theta\chi_2
 \left|
 \frac{1}{\lambda^2}\frac{1}{|y|^4}
 +
 \frac{\sigma T_1\chi_1}{\lambda_0^2}
 \right|
 \lesssim
 e^{3\tau}\lambda^2
 \frac{1}{|z|^4}
 {\bf 1}_{|z|>\frac{1}{\tau}}
 +
 e^{2\tau}
 {\bf 1}_{\frac{1}{\tau}<|z|<\frac{2}{\tau}}
 \nonumber
 \\
 &\lesssim
 \tau^\frac{1}{4}e^{\frac{\tau}{2}}
 {\bf 1}_{\frac{1}{\tau}<|z|<1}
 +
 \frac{e^{\frac{\tau}{2}}}{\tau^\frac{15}{4}}
 \frac{1}{|z|^4}
 {\bf 1}_{|z|>1}
 +
 e^{2\tau}
 {\bf 1}_{\frac{1}{\tau}<|z|<\frac{2}{\tau}}.
\end{align*}
The proof is completed.
\end{proof}

We next estimate $N_\text{out}$.
\begin{lem}\label{L7.3}
 It holds that
 \[
 |N_\text{\rm out}|
 \lesssim
 \begin{cases}
 \dis
 \tau^{-3}e^{2\tau}
 \frac{R^8(\log R)^2}{1+|y|^{11}}
 {\bf 1}_{|y|<2R}
 +
 e^{2\tau}{\bf 1}_{|z|<\frac{2}{\tau}}
 +
 \tau^{-3}e^{2\tau}
 &
 \text{\rm for}\ |z|<1,
 \\[2mm]
 \dis
 \tau^{-3}e^{2\tau}|z|^4
 &
 \text{\rm for}\ 1<|z|<K\sqrt\tau,
 \\[1mm]
 \dis
 \tau^{-\frac{7}{8}}e^{2\tau}
 \frac{1}{|z|^\frac{1}{4}}
 &
 \text{\rm for}\ |z|>K\sqrt\tau.
 \end{cases}
 \]
 \end{lem}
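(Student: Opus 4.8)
Since $n=6$ we have $p=2$, so $f(u)=|u|u$, $f'(u)=2|u|$ is globally Lipschitz, and the Taylor remainder ${\cal R}(a;h):=f(a+h)-f(a)-f'(a)h$ satisfies $|{\cal R}(a;h)|\le h^2$ for all $a,h\in\R$. Writing $U_1=\lambda^{-2}{\sf Q}(y)$, $U_2=-\Theta\chi_2$ and $\omega=\lambda_0^{-2}\sigma T_1\chi_1+\lambda^{-2}\epsilon\chi_\text{in}+\tilde w$, the definition of $N_\text{out}$ reads $N_\text{out}=f(U_1+U_2+\omega)-f(U_1)-f'(U_1)(U_2+\omega)-f(U_2)-f'(U_2)(U_1+\omega)$. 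Expanding $f(U_1+U_2+\omega)$ once around $U_1$ and once around $U_2$ gives the two elementary identities
\[
 N_\text{out}={\cal R}(U_1;U_2+\omega)-f(U_2)-f'(U_2)(U_1+\omega)
 ={\cal R}(U_2;U_1+\omega)-f(U_1)-f'(U_1)(U_2+\omega),
\]
and the plan is to use whichever one is adapted to the region at hand. The inputs will be the pointwise bounds already established: $|\tilde w|\le{\cal W}$ by \eqref{5.10}; $|\lambda^{-2}\epsilon\chi_\text{in}|\lesssim\tau^{-3/2}e^\tau R^4\log R\,(1+|y|^{11/2})^{-1}{\bf 1}_{|y|<2R}$ by \eqref{6.4} and \eqref{6.23}; $|\lambda_0^{-2}\sigma T_1\chi_1|\lesssim e^\tau{\bf 1}_{|z|<2/\tau}$ by \eqref{4.2} and \eqref{5.2}; $\Theta\lesssim e^\tau$; and the scale relation $|y|=|z|\,e^{\frac{3}{4}\tau}\tau^{\frac{15}{8}}$, which (with $T=e^{-R}$, $R$ large) makes $\chi_\text{in}\equiv0$ on $\{|z|\ge\tau^{-1}\}$ and forces $U_1\lesssim\lambda^{-2}|y|^{-4}\lesssim e^{-\tau/2}\tau^{-15/4}|z|^{-4}$ whenever $|y|\gtrsim1$.

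I would split $\R^6$ according to the value of $\chi_2$. On $\{|z|<\tau^{-1}\}$ one has $\chi_1=1$, $\chi_2=0$, so $U_2\equiv0$ and both identities reduce to $N_\text{out}={\cal R}(U_1;\omega)$, whence $|N_\text{out}|\le\omega^2\lesssim(\lambda_0^{-2}\sigma T_1)^2+(\lambda^{-2}\epsilon\chi_\text{in})^2+\tilde w^2$; the three squares are bounded respectively by $e^{2\tau}{\bf 1}_{|z|<2/\tau}$, by $\tau^{-3}e^{2\tau}R^8(\log R)^2(1+|y|^{11})^{-1}{\bf 1}_{|y|<2R}$, and, since ${\cal W}\lesssim e^\tau\tau^{-3/2}$ for $|z|<1$, by $\tau^{-3}e^{2\tau}$. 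On the transition annulus $\{\tau^{-1}<|z|<2\tau^{-1}\}$ we have $\chi_\text{in}=0$ and each of $U_1,U_2,\omega$ is $O(e^\tau)$ (with $U_1$ in fact exponentially small and $U_2=O(\Theta)=O(e^\tau)$), so every term of either identity is $O(e^{2\tau})\le e^{2\tau}{\bf 1}_{|z|<2/\tau}$.

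It remains to treat $\{|z|>2\tau^{-1}\}$, where $\chi_1=\chi_\text{in}=0$, $\chi_2=1$, $U_2=-\Theta$, $\omega=\tilde w$; here I would use the second identity, $N_\text{out}={\cal R}(-\Theta;U_1+\tilde w)-f(U_1)-f'(U_1)(-\Theta+\tilde w)$, and estimate $|{\cal R}(-\Theta;U_1+\tilde w)|\lesssim U_1^2+\tilde w^2$, $|f(U_1)|=U_1^2$, $|f'(U_1)\Theta|\lesssim|U_1|e^\tau$, $|f'(U_1)\tilde w|\lesssim|U_1|{\cal W}$. Using $U_1\lesssim e^{-\tau/2}\tau^{-15/4}|z|^{-4}$ and the three explicit forms of ${\cal W}$ on $\{2\tau^{-1}<|z|<1\}$, $\{1<|z|<K\sqrt\tau\}$, $\{|z|>K\sqrt\tau\}$, a direct comparison of the powers of $e^\tau$, $\tau$ and $|z|$ shows that the $\tilde w^2$ term always dominates and produces $\tau^{-3}e^{2\tau}$, $\tau^{-3}e^{2\tau}|z|^4$ and $\tau^{-7/8}e^{2\tau}|z|^{-1/4}$ respectively. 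Combining the first of these with the two previous paragraphs gives the stated bound on $\{|z|<1\}$, and the other two give the remaining cases.

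I do not expect any genuinely hard analytic step; the main obstacle will be organizational — keeping track of the double linearization together with the four overlapping cutoffs $\chi_1$, $\chi_2$, $\chi_\text{in}$ and the implicit $z$-regimes hidden in ${\cal W}$. The one place where the precise structure is really needed is the selfsimilar region: a crude bound on $f'(U_1)U_2=2|\lambda^{-2}{\sf Q}(y)|\,\Theta$ would not close, so there one must exploit the $|y|^{-4}$ decay of ${\sf Q}$ and the scale relation $|y|=|z|\,e^{\frac{3}{4}\tau}\tau^{\frac{15}{8}}$ to see that $U_1$ is exponentially small even though $\Theta$ is as large as $e^\tau$.
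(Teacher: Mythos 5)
Your proposal is correct and follows essentially the same route as the paper: the quadratic bound on the Taylor remainder of $f$, linearization around $\lambda^{-2}{\sf Q}$ where $\chi_2=0$ and around $-\Theta\chi_2$ in the selfsimilar region (so that the cross term $f'(\lambda^{-2}{\sf Q})\Theta$ is controlled by the $|y|^{-4}$ decay of ${\sf Q}$), and insertion of the three regimes of ${\cal W}$. The only difference is organizational — you isolate the transition annulus $\tau^{-1}<|z|<2\tau^{-1}$ instead of folding it into the middle region — which does not change the argument.
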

 \begin{proof}
We first note that the following  elementary relation.
 \[
 |f(a+b)-f(a)-f'(a)b|\lesssim b^2 \qquad \text{for } a,b\in\R.
 \]
Since $\chi_2=0$ for $|z|<\frac{1}{2\tau}$,
we get for $|z|<\frac{1}{2\tau}$
 \begin{align*}
 |N_\text{out}|
 &\lesssim
 \left|
 f\left(
 \frac{{\sf Q}}{\lambda^2}
 +
 \frac{\sigma T_1\chi_1}{\lambda_0^2}
 +
 \frac{\epsilon\chi_\text{in}}{\lambda^2}+\tilde w
 \right)
 -
 f\left( \frac{{\sf Q}}{\lambda^2} \right)
 -
 f'\left( \frac{{\sf Q}}{\lambda^2} \right)
 \left(
 \frac{\sigma T_1\chi_1}{\lambda_0^2}
 +
 \frac{\epsilon\chi_\text{in}}{\lambda^2}+\tilde w
 \right)
 \right|
 \\
 &\lesssim
 \left(
 \frac{\sigma T_1\chi_1}{\lambda_0^2}
 +
 \frac{\epsilon\chi_\text{in}}{\lambda^2}+\tilde w
 \right)^2
 \\
 &\lesssim
 e^{2\tau}{\bf 1}_{|z|<\frac{1}{2\tau}}
 +
 \frac{e^{2\tau}}{\tau^3}
 \frac{R^8(\log R)^2}{1+|y|^{11}}
 {\bf 1}_{|y|<2R}
 +
  \tau^{-3}e^{2\tau}
 {\bf 1}_{|z|<\frac{1}{2\tau}}.
 \end{align*}
Furthermore
since $\chi_\text{in}=0$ for $|z|>\frac{1}{2\tau}$,
it holds that for $\frac{1}{2\tau}<|z|<K\sqrt\tau$
 \begin{align*}
 |N_\text{out}|
 &\lesssim
 \left|
 f\left(
 \frac{{\sf Q}}{\lambda^2}
 +
 \frac{\sigma T_1\chi_1}{\lambda_0^2}
 -
 \Theta\chi_2
 +
 \tilde w
 \right)
 -
 f(-\Theta\chi_2)
 -
 f'(-\Theta\chi_2)
 \left(
 \frac{{\sf Q}}{\lambda^2}
 +
 \frac{\sigma T_1\chi_1}{\lambda_0^2}
 +
 \tilde w
 \right)
 \right|
 \\
 &\qquad
 +
 f\left(
 \frac{{\sf Q}}{\lambda^2}
 \right)
 +
 f'\left(
 \frac{{\sf Q}}{\lambda^2}
 \right)
 \left|
 \frac{\sigma T_1\chi_1}{\lambda_0^2}-\Theta\chi_2+\tilde w
 \right|
 \\
 &\lesssim
 \left(
 \frac{{\sf Q}}{\lambda^2}
 +
 \frac{\sigma T_1\chi_1}{\lambda_0^2}
 +
 \tilde w
 \right)^2
 +
 \left(
 \frac{{\sf Q}}{\lambda^2}
 \right)^2
 +
 \frac{{\sf Q}}{\lambda^2}
 \left|
 \frac{\sigma T_1\chi_1}{\lambda_0^2}-\Theta\chi_2+\tilde w
 \right|
 \\
 &\lesssim
 \frac{1}{\lambda^4}\frac{1}{|y|^8}
 +
 e^{2\tau}
 {\bf 1}_{|z|<\frac{2}{\tau}}
 +
 \tilde w^2
 +
 \frac{1}{\lambda^2}\frac{1}{|y|^4}
 \left(
 e^\tau{\bf 1}_{|z|<\frac{2}{\tau}}+e^\tau+|\tilde w|
 \right)
 \\
 &\lesssim
 e^{4\tau}\lambda^4
 \frac{1}{|z|^8}
 +
 e^{2\tau}{\bf 1}_{|z|<\frac{2}{\tau}}
 +
 \tau^{-3}e^{2\tau}
 (1+|z|^4)
 +
 e^{3\tau}\lambda^2
 \frac{1}{|z|^4}
 \\
 &\lesssim
 \tau^\frac{1}{2}e^{-\tau}
 +
 e^{2\tau}{\bf 1}_{|z|<\frac{2}{\tau}}
 +
 \tau^{-3}e^{2\tau}
 (1+|z|^4)
 +
 \tau^{\frac{1}{4}}
 e^\frac{\tau}{2}.
 \end{align*}
We finally investigate the case $|z|>K\sqrt\tau$.
 \begin{align*}
 |N_\text{out}|
 &\lesssim
 \left|
 f\left(
 \frac{{\sf Q}}{\lambda^2}
 -
 \Theta\chi_2
 +
 \tilde w
 \right)
 -
 f(-\Theta\chi_2)
 -
 f'(-\Theta\chi_2)
 \left(
 \frac{{\sf Q}}{\lambda^2}
 +
 \tilde w
 \right)
 \right|
 \\
 &\qquad
 +
 f\left(
 \frac{{\sf Q}}{\lambda^2}
 \right)
 +
 f'\left(
 \frac{{\sf Q}}{\lambda^2}
 \right)
 \left|
 -\Theta\chi_2+\tilde w
 \right|
 \\
 &\lesssim
 \left(
 \frac{{\sf Q}}{\lambda^2}
 +
 \tilde w
 \right)^2
 +
 \left(
 \frac{{\sf Q}}{\lambda^2}
 \right)^2
 +
 \frac{{\sf Q}}{\lambda^2}
 \left|
 -\Theta\chi_2+\tilde w
 \right|
 \\
 &\lesssim
 \frac{1}{\lambda^4}\frac{1}{|y|^8}
 +
 |\tilde w|^2
 +
 \frac{1}{\lambda^2}\frac{1}{|y|^4}
 \left(
 \tau e^\tau
 \frac{1}{|z|^2}
 +
 \frac{e^\tau}{\tau^\frac{7}{16}}
 \frac{1}{|z|^\frac{1}{8}}
 \right)
 \lesssim
 \frac{e^{2\tau}}{\tau^\frac{7}{8}}
 \frac{1}{|z|^\frac{1}{4}}.
 \end{align*}
The proof is completed.
 \end{proof}

From Lemma \ref{L7.1} - Lemma \ref{7.3},
we obtain $L_\rho^2(\R^6)$ estimates of
$F_\text{out}+\mu+g_\text{out}+N_\text{out}$.
 \begin{lem}\label{L7.4}
 It holds that
 \begin{align*}
 \|e^{-2\tau}F_\text{\rm out}\|_\rho
 +
 \|e^{-2\tau}g_\text{\rm out}\|_\rho
 +
 \|e^{-2\tau}N_\text{\rm out}\|_\rho
 +
 e^{-2\tau}|(\mu,e_1)_\rho|
 +
 \|e^{-2\tau}\mu^\bot\|_\rho 
 &\lesssim
 \tau^{-3},
 \end{align*}
where $\mu^\bot=\mu-(\mu,e_0)_\rho e_0-(\mu,e_1)_\rho e_1$.
 \end{lem}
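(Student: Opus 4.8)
The plan is to reduce everything to the pointwise bounds of Lemmas~\ref{L7.1}--\ref{L7.3} by integrating them against the Gaussian weight $\rho(z)=e^{-|z|^2/4}$, together with one algebraic observation about $\mu$ that is the real point of the choice \eqref{4.6} of $\alpha$. Two bookkeeping facts make all the $z$-integrals routine. Since $\rho\le1$ and $\int_{|z|<a}\rho\,dz\lesssim a^6$ for $a\le1$, any quantity supported on a ball $\{|z|<a\}$ with $a\lesssim1$ contributes $\lesssim a^3$ to its $L^2_\rho$ norm. Moreover $z=(\lambda/\sqrt{T-t})\,y$ with $\lambda/\sqrt{T-t}=(1+o(1))e^{-3\tau/4}\tau^{-15/8}$ by \eqref{6.3}, so any term of $F_{\rm out}$, $g_{\rm out}$, $N_{\rm out}$ expressed through the inner variable $|y|$ (those carrying ${\bf 1}_{|y|<2R}$, a factor $\log|y|$, or a negative power of $|y|$) is estimated by changing $\int(\cdot)^2\rho\,dz$ into a $y$-integral: this produces the Jacobian $(\lambda/\sqrt{T-t})^6=\lambda^2(T-t)^{-3}=(1+o(1))e^{\tau/2}\tau^{-15/4}$ times a convergent integral $\int_{\R^6}(1+|y|)^{-m}\,dy$, which, together with the $e^{-2\tau}$ prefactor, is exponentially small, in particular $\ll\tau^{-3}$.

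With these in hand, the bounds of Lemmas~\ref{L7.1}--\ref{L7.3}, multiplied by $e^{-2\tau}$, split into three groups. The inner-variable pieces are exponentially small by the previous paragraph. The pieces supported on $\{|z|<2/\tau\}$ or $\{1/\tau<|z|<2/\tau\}$ become, after the factor $e^{-2\tau}$, bounded by ${\bf 1}_{|z|<2/\tau}$ (from the $e^{2\tau}{\bf 1}_{|z|<2/\tau}$ terms) or by $\tau^{1/4}e^{-3\tau/2}{\bf 1}$, so the first bookkeeping fact gives $\lesssim(1/\tau)^3=\tau^{-3}$ in the first case and an exponentially small quantity in the second; the $\tau^{-3}e^{2\tau}$-type terms of $N_{\rm out}$ on $\{|z|<1\}$ likewise give $\lesssim\tau^{-3}$. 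The far-field pieces ($|z|>1$), after the factor $e^{-2\tau}$, are of the shape $\tau^{-a}e^{-3\tau/2}|z|^{-b}$ (exponentially small), $\tau^{-3}|z|^{4}{\bf 1}_{1<|z|<K\sqrt{\tau}}$ (with $L^2_\rho$ norm $\lesssim\tau^{-3}\big\|\,|z|^{4}\big\|_\rho\lesssim\tau^{-3}$), or $\tau^{-7/8}|z|^{-1/4}{\bf 1}_{|z|>K\sqrt{\tau}}$, whose $L^2_\rho$ norm is bounded by the Gaussian tail $\big(\int_{|z|>K\sqrt{\tau}}\rho\,dz\big)^{1/2}$ and is $\ll\tau^{-3}$. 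Summing these finitely many contributions gives the estimates for $\|e^{-2\tau}F_{\rm out}\|_\rho$, $\|e^{-2\tau}g_{\rm out}\|_\rho$ and $\|e^{-2\tau}N_{\rm out}\|_\rho$.

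It remains to bound $e^{-2\tau}|(\mu,e_1)_\rho|$ and $\|e^{-2\tau}\mu^\bot\|_\rho$, where $n=6$, $p=2$ and the choice \eqref{4.6} enter. With $p=2$, $n=6$, formula \eqref{5.3} gives
\[
 e^{-2\tau}\mu
 =\frac{\alpha e_1}{\tau^2}\Big(1+\frac{\alpha}{\tau}e_1\Big)^{-2}
 -\frac{2\alpha^2|\nabla_ze_1|^2}{\tau^2}\Big(1+\frac{\alpha}{\tau}e_1\Big)^{-3}.
\]
From \eqref{4.6} and \eqref{3.5} (with $p=2$) one has $\alpha=(8{\sf c}_1)^{-1}$, hence $1+\frac{\alpha}{\tau}e_1=1+\frac{\alpha{\sf c}_1}{\tau}(|z|^2-2n)\ge1-\frac{n}{4\tau}\ge\tfrac12$ for $\tau$ large, so both denominators are bounded and an elementary Taylor expansion with remainder yields
\[
 e^{-2\tau}\mu
 =\frac{\alpha}{\tau^2}\big(e_1-2\alpha|\nabla_ze_1|^2\big)+r,
 \qquad
 \|r\|_\rho\lesssim\tau^{-3},
\]
the remainder $r$ being $\tau^{-3}$ (and higher) times fixed polynomials in $|z|$. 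Since $|\nabla_ze_1|^2=4{\sf c}_1^2|z|^2=4{\sf c}_1e_1+8n{\sf c}_1^2$, the leading term equals
\[
 e_1-2\alpha|\nabla_ze_1|^2=(1-8\alpha{\sf c}_1)e_1-16n\alpha{\sf c}_1^2=-16n\alpha{\sf c}_1^2,
\]
because $1-8\alpha{\sf c}_1=0$. Hence the $\tau^{-2}$ part of $e^{-2\tau}\mu$ is a constant, i.e.\ a multiple of $e_0$; its $e_1$-projection vanishes and it is annihilated when forming $\mu^\bot$. Therefore $(e^{-2\tau}\mu,e_1)_\rho=(r,e_1)_\rho$ and $e^{-2\tau}\mu^\bot=r^\bot$, both $\lesssim\tau^{-3}$, which finishes the lemma.

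The main obstacle is bookkeeping: tracking the competing powers of $\tau$ and $e^\tau$, the two cutoff scales $|z|\sim1/\tau$ and $|z|\sim K\sqrt{\tau}$, and the $y\leftrightarrow z$ Jacobian simultaneously. The one genuinely delicate point is the exact cancellation $1-8\alpha{\sf c}_1=0$, which is precisely why $\alpha$ was fixed by \eqref{4.6}; verifying that nothing else contributes to $(\mu,e_1)_\rho$ or $\mu^\bot$ at order $\tau^{-2}$ is the heart of the matter.
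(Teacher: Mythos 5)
Your proposal is correct and follows essentially the same route as the paper: integrate the pointwise bounds of Lemmas~\ref{L7.1}--\ref{L7.3} against $\rho$, using $\|{\bf 1}_{|z|<2/\tau}\|_\rho\lesssim\tau^{-3}$ and the $y\leftrightarrow z$ Jacobian for the inner-scale terms, and then exploit the cancellation forced by the choice \eqref{4.6} of $\alpha$ together with $|\nabla_ze_1|^2\in\mathrm{span}\{e_0,e_1\}$ for the $\mu$ terms. The only cosmetic difference is that you realize the cancellation via the pointwise identity $|\nabla_ze_1|^2=4{\sf c}_1e_1+8n{\sf c}_1^2$ (so the $\tau^{-2}$ part of $e^{-2\tau}\mu$ is literally a constant), whereas the paper projects onto $e_1$ directly using \eqref{3.5}--\eqref{3.6}; these are the same computation.
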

 \begin{proof}
 From Lemma \ref{L7.1} and $|z|=e^\frac{\tau}{2}\lambda|y|$,
 we see that
 \begin{align*}
 \|e^{-2\tau}F_\text{\rm out}\|_\rho
 \lesssim
 \frac{e^{-\tau}}{\tau^\frac{3}{2}\lambda^2}
 \left( \frac{\lambda}{\sqrt{T-t}} \right)^3
 +
 \tau^\frac{1}{4}e^{-\frac{3}{2}\tau}
 +
 \tau^{-\frac{15}{4}}e^{-\frac{3}{2}\tau}
 \lesssim
 \tau^{-3}.
 \end{align*}
Since $\|{\bf 1}_{|z|<\frac{1}{\tau}}\|_\rho\lesssim\tau^{-\frac{n}{2}}$,
we get from Lemma \ref{L7.2} that
 \begin{align*}
 \|e^{-2\tau}g_\text{\rm out}\|_\rho
 \lesssim
 \tau^{-\frac{n}{2}}
 +
 \tau^\frac{1}{4}e^{-\frac{3}{2}\tau}
 +
 \tau^{-\frac{15}{4}}e^{-\frac{3}{2}\tau}
 \lesssim
 \tau^{-3}.
 \end{align*}
In the  same manner,
from Lemma \ref{L7.3},
we verify that
 \begin{align*}
 \|e^{-2\tau}N_\text{\rm out}\|_\rho
 \lesssim
 R^8(\log R)^2\tau^{-3}
 \left( \frac{\lambda}{\sqrt{T-t}} \right)^3
 +
 \tau^{-\frac{n}{2}}
 +
 \tau^{-3}
 +
 \tau^{-\frac{7}{8}}
 \|{\bf 1}_{|z|>K\sqrt\tau}\|_\rho
 \lesssim
 \tau^{-3}.
 \end{align*}
Finally we compute the last two terms.
We recall that
 \[
 \mu(z,\tau)
 =
 \frac{\alpha}{\tau^2}e_1
 \left( 1+\frac{\alpha}{\tau}e_1 \right)^{-2}
 e^{2\tau}
 -
 \frac{2\alpha^2}{\tau^2}|\nabla_ze_1|^2
 \left( 1+\frac{\alpha}{\tau}e_1 \right)^{-3}
 e^{2\tau}.
 \]
Since
$(e_1^2,e_1)_\rho=8{\sf c}_1$ (see \eqref{3.5}) and
$(|\nabla_ze_1|^2,e_1)_\rho=4{\sf c}_1$ (see \eqref{3.6}),
it holds that
 \[
 \alpha
 -
 2\alpha^2(|\nabla_ze_1|^2,e_1)_\rho=0,
 \quad\text{where }
 \alpha=\frac{1}{(e_1^2,e_1)_\rho}.
 \]
From this relation,
we easily see that $|(\mu,e_1)_\rho|\lesssim\tau^{-3}$.
Furthermore since $|\nabla_ze_1|^2\in$ span$\{e_0,e_1\}$,
 we obtain $\|e^{-2\tau}\mu^\bot\|_\rho\lesssim\tau^{-3}$.
 The proof is completed.
 \end{proof}

\subsection{Choice of parameters}
\label{S7.2}
We rewrite \eqref{7.1} in a different form.
 \begin{align}\label{7.2}
 \varphi_\tau
 &=
 A_z\varphi
 +
 \varphi
 -
 \sum_{i=0}^1
 \frac{2\alpha\tau^{-1}e_1+2\chi_1}{1+\alpha\tau^{-1}e_1}
 (\varphi,e_i)_\rho e_i
 -
 \frac{2\alpha\tau^{-1}e_1+2\chi_1}{1+\alpha\tau^{-1}e_1}
 \varphi^\bot
 \\
 &\qquad
 +
 e^{-2\tau}F_\text{out}
 +
 e^{-2\tau}\mu
 +
 e^{-2\tau}g_\text{out}
 +
 e^{-2\tau}N_\text{out}.
 \nonumber
 \end{align}
The symbol $^\bot$ represents the projection to the orthogonal complement of
span$\{e_0,e_1\}$ in $L_\rho^2(\R^6)$.
We solve this problem under the initial condition:
 \begin{equation}\label{7.3}
 \varphi(\tau_0)
 =
 ({\bf d}\cdot {\bf e})\chi_\text{out},
 \qquad
 \chi_\text{out}(z)=\eta\left( \frac{4|z|}{K\sqrt{\tau}} \right).
 \end{equation}
The parameter ${\bf d}=(d_0,d_1)$ is chosen later.
To construct a solution of this problem,
we here consider an auxiliary problem.
 \[
 \begin{cases}
 \dis
 \Phi_\tau
 =
 A_z\Phi
 +
 \Phi
 -
 \sum_{i=0}^1
 \frac{2\alpha\tau^{-1}e_1+2\chi_1}{1+\alpha\tau^{-1}e_1}
 (\Phi,e_i)_\rho e_i
 -
 \frac{2\alpha\tau^{-1}e_1+2\chi_1}{1+\alpha\tau^{-1}e_1}
 \nu
 \\[4mm]
 \hspace{10mm}
 +
 e^{-2\tau}F_\text{out}
 +
 e^{-2\tau}\mu
 +
 e^{-2\tau}g_\text{out}
 +
 e^{-2\tau}N_\text{out}
 & \text{in } \R^6\times(\tau_0,\infty),
 \\[2mm]
 \Phi(\tau_0)
 =
 ({\bf d}\cdot {\bf e})\chi_\text{out},
 \end{cases}
 \]
where $\nu(z,\tau)$ is a given function.
Then $a_j=(\Phi,e_j)_\rho$ satisfies
 \begin{align*}
 \dot a_j
 &=
 (1-j)
 a_j
 -
 \sum_{i=0}^1
 a_i
 \left(
 \frac{2\alpha\tau^{-1}e_1+2\chi_1}{1+\alpha\tau^{-1}e_1}e_i,e_j
 \right)_\rho
 \underbrace{
 -
 \left(
 \frac{2\alpha\tau^{-1}e_1+2\chi_1}{1+\alpha\tau^{-1}e_1}\nu,e_j
 \right)_\rho}_{=:q_j}
 \\
 &\qquad
 +
 \underbrace{
 (e^{-2\tau}F_\text{out},e_j)_\rho
 +
 (e^{-2\tau}\mu,e_j)_\rho
 +
 (e^{-2\tau}g_\text{out},e_j)_\rho
 +
 (e^{-2\tau}N_\text{out},e_j)_\rho}_{=:f_j}.
 \end{align*}
This ODE system is rewritten as
 \begin{equation}\label{7.4}
 \dot{\bf a}
 =
 B{\bf a}
 +
 {\bf q}
 +
 {\bf f},
 \end{equation}
where
 \[
 {\bf a}
 =
 \begin{pmatrix}
 a_0\\
 a_1
 \end{pmatrix},
 \qquad
 {\bf q}
 =
 \begin{pmatrix}
 q_0\\
 q_1
 \end{pmatrix},
 \qquad
 {\bf f}
 =
 \begin{pmatrix}
 f_0\\
 f_1
 \end{pmatrix},
 \qquad
 B=
 \begin{pmatrix}
 B_{11} & B_{12}
 \\
 B_{21} & B_{22}
 \end{pmatrix}.
 \]
Since $|(\chi_1e_i,e_j)_\rho|\lesssim\tau^{-6}$ and
$\alpha=\frac{1}{(e_1^2,e_1)_\rho}$,
it holds that
 \[
 B=
 \begin{pmatrix}
 1+\Delta_{11} & D\tau^{-1}+\Delta_{12}
 \\
 D\tau^{-1}+\Delta_{21} & -2\tau^{-1}+\Delta_{22}
 \end{pmatrix},
 \qquad
 D=-2\alpha e_0,
 \qquad
 |\Delta_{ij}|\lesssim\tau^{-2}.
 \]
We first consider the linear part of this equation.
We can choose two independent solutions of the linear problem $\dot{\bf a}=B{\bf a}$
satisfying
 \begin{align*}
 {\bf w}_1
 &=
 \begin{pmatrix}
 e^\tau \\
 D\tau^{-1}e^\tau
 \end{pmatrix}
 +
 \begin{pmatrix}
 O(\tau^{-1}e^\tau) \\
 O(\tau^{-2}e^\tau)
 \end{pmatrix}
 \qquad\text{for } \tau>\tau_0,
 \\[2mm]
 {\bf w}_2
 &=
 \begin{pmatrix}
 -D\tau^{-3} \\
 \tau^{-2}
 \end{pmatrix}
 +
 \begin{pmatrix}
 O(\tau^{-4}) \\
 O(\tau^{-3})
 \end{pmatrix}
 \qquad\text{for } \tau>\tau_0.
 \end{align*}
We define the fundamental matrix $M$ by
 \begin{align*}
 M
 &=
 ({\bf w}_1,{\bf w}_2)
 =
 \underbrace{
 \begin{pmatrix}
 e^\tau & -D\tau^{-3} \\
 D\tau^{-1}e^\tau & \tau^{-2}
 \end{pmatrix}}_{=M_0}
 +
 M_1,
 \qquad
 M_1=O(\tau^{-1}M_0),
 \end{align*}
where $M_1=O(\tau^{-1}M_0)$ represents a matrix satisfying
 \[
 |[M_1]_{ij}|
 \lesssim
 \tau^{-1}|[M_0]_{ij}|
 \qquad\text{for } i,j=0,1.
 \]
The inverse matrix $N=M^{-1}$ is given by
 \begin{align*}
 N
 =
 \underbrace{
 \begin{pmatrix}
 e^{-\tau} & D\tau^{-1}e^{-\tau} \\
 -D\tau & \tau^2
 \end{pmatrix}}_{N_0}
 +
 N_1,
 \qquad
 N_1=
 O(\tau^{-1}N_0).
 \end{align*}
A solution ${\bf a}(\tau)$ of \eqref{7.4} is expressed by
 \begin{align*}
 {\bf a}(\tau)
 &=
 M(\tau)\left(
 N(\tau_0){\bf a}(\tau_0)
 +
 \int_{\tau_0}^\tau N(\tau')({\bf q}+{\bf f})(\tau')d\tau'
 \right)
 \\
 &=
 M(\tau)\left(
 N(\tau_0){\bf a}(\tau_0)
 +
 \int_{\tau_0}^\tau I_1N(\tau')({\bf q}+{\bf f})(\tau')d\tau'
 +
 \int_{\tau_0}^\tau I_2N(\tau')({\bf q}+{\bf f})(\tau')d\tau'
 \right),
 \end{align*}
where $I_1$ and $I_2$ are given by
 \[
 I_1
 =
 \begin{pmatrix}
 1 & 0 \\
 0 & 0
 \end{pmatrix},
 \qquad
 I_2
 =
 \begin{pmatrix}
 0 & 0 \\
 0 & 1
 \end{pmatrix}.
 \]
We take
 \[
 {\bf a}(\tau_0)
 =
 -M(\tau_0)
 \int_{\tau_0}^\infty I_1N(\tau')({\bf q}+{\bf f})(\tau')d\tau'.
 \]
From this choice,
we deduce that
 \begin{align}\label{7.5}
 {\bf a}(\tau)
 =
 M(\tau)\left(
 -\int_\tau^\infty I_1N(\tau')({\bf q}+{\bf f})(\tau')d\tau'
 +
 \int_{\tau_0}^\tau I_2N(\tau')({\bf q}+{\bf f})(\tau')d\tau'
 \right).
 \end{align}
By a direct computation,
we verify that
 \begin{align*}
 M_0(\tau)I_1N_0(\tau')
 &=
 \begin{pmatrix}
 e^\tau & -D\tau^{-3} \\
 D\tau^{-1}e^\tau & \tau^{-2}
 \end{pmatrix}
 \begin{pmatrix}
 e^{-\tau'} & D\tau'^{-1}e^{-\tau'} \\
 0 & 0
 \end{pmatrix}
 \\
 &=
 \begin{pmatrix}
 e^{\tau-\tau'} & D\tau'^{-1}e^{\tau-\tau'} \\
 D\tau^{-1}e^{\tau-\tau'} & D^2\tau^{-1}\tau'^{-1}e^{\tau-\tau'}
 \end{pmatrix},
 \\[2mm]
 M_0(\tau)I_2N_0(\tau')
 &=
 \begin{pmatrix}
 e^\tau & -D\tau^{-3} \\
 D\tau^{-1}e^\tau & \tau^{-2}
 \end{pmatrix}
 \begin{pmatrix}
 0 & 0 \\
 -D\tau' & \tau'^2
 \end{pmatrix}
 \\
 &=
 \begin{pmatrix}
 D^2\tau^{-3}\tau' & -D\tau^{-3}\tau'^2 \\
 -D\tau^{-2}\tau' & \tau^{-2}\tau'^2
 \end{pmatrix}.
 \end{align*}
Therefore
it holds that
 \begin{align*}
 M(\tau)I_1N(\tau')
 &=
 \underbrace{M_0(\tau)I_1N_0(\tau')}_{=S_1}
 +
 O(\tau^{-1}S_1)
 +
 O(\tau'^{-1}S_1),
 \\[1mm]
 M(\tau)I_2N(\tau')
 &=
 \underbrace{M_0(\tau)I_2N(\tau')}_{=S_2}
 +
 O(\tau^{-1}S_2)
 +
 O(\tau'^{-1}S_2).
 \end{align*}
We introduce a space $Y$ for a fixed point argument.
 \[
 Y
 =
 \{\nu\in C([\tau_0,\infty);X);\ \|\nu\|_Y\leq1\},
 \]
where $X$ is the orthogonal complement of a subspace spanned by $\{e_0,e_1\}$ of
$L_\rho^2(\R^6)$.
The norm $\|\cdot\|_Y$ is defined by
 \[
 \|\nu\|_Y:=\sup_{\tau>\tau_0}\tau^2\|\nu(\tau)\|_\rho.
 \]
For given $\nu\in Y$,
we define ${\bf a}(\tau)$ by \eqref{7.5},
and choose a parameter ${\bf d}=(d_0,d_1)$ as
 \begin{equation}\label{7.6}
 (({\bf d}\cdot{\bf e})\chi_\text{out},e_i)_\rho=a_i(\tau_0)
 \qquad
 (i=0,1).
 \end{equation}
We define $\Phi^\bot$ as a unique solution of
 \begin{equation}\label{7.7}
 \begin{cases}
 \dis
 \pa_\tau\Phi^\bot
 =
 A_z\Phi^\bot
 +
 \Phi^\bot
 -
 \left(
 \frac{2\alpha\tau^{-1}e_1+2\chi_1}{1+\alpha\tau^{-1}e_1}
 ({\bf a}\cdot{\bf e})
 \right)^\bot
 -
 \left(
 \frac{2\alpha\tau^{-1}e_1+2\chi_1}{1+\alpha\tau^{-1}e_1}
 \nu
 \right)^\bot
 \\[4mm]
 \hspace{15mm}
 +
 e^{-2\tau}F_\text{out}^\bot
 +
 e^{-2\tau}\mu^\bot
 +
 e^{-2\tau}g_\text{out}^\bot
 +
 e^{-2\tau}N_\text{out}^\bot
 &
 \qquad\text{in } \R^6\times(\tau_0,\infty),
 \\[2mm]
 \Phi^\bot(\tau_0)
 =
 \left\{
 ({\bf d}\cdot {\bf e})\chi_\text{out}
 \right\}^\bot.
 \end{cases}
 \end{equation}
This procedure defines a mapping $\nu\in Y\mapsto\Phi^\bot\in Y$.
The fixed point of this mapping gives the desired solution.
We now provide a priori estimates of $\Phi^\bot$.
Since
$|f_0(\tau)|\lesssim\tau^{-2}$, $|f_1(\tau)|\lesssim\tau^{-3}$
(see Lemma \ref{L7.4})
and
$|q_0|\lesssim\tau^{-2}\|\nu\|_\rho$, $|q_1|\lesssim\tau^{-1}\|\nu\|_\rho$,
we verify from \eqref{7.5} that
 \begin{align}\label{7.8}
 |a_0|\lesssim\tau^{-2}+\tau^{-3}\log\tau\|\nu\|_Y,
 \qquad
 |a_1|\lesssim\tau^{-2}\log\tau+\tau^{-2}\log\tau\|\nu\|_Y.
 \end{align}
Since $e_1(z)={\sf c}_1(|z|^2-2n)$,
it holds that
 \begin{align*}
 \left|
 \left(
 \left(
 \frac{2\alpha\tau^{-1}e_1}{1+\alpha\tau^{-1}e_1}
 \nu
 \right)^\bot,
 \Phi^\bot
 \right)_\rho
 \right|
 &=
 \left|
 \left(
 \frac{2\alpha\tau^{-1}e_1}{1+\alpha\tau^{-1}e_1}\nu,
 \Phi^\bot
 \right)_\rho
 \right|
 \\
 &=
 \left|
 \left(
 \frac{2\alpha\tau^{-1}e_1}{1+\alpha\tau^{-1}e_1}\nu,
 \Phi^\bot
 \right)_{L_\rho^2(|z|<\tau^\frac{1}{4})}
 +
 \left(
 \frac{2\alpha\tau^{-1}e_1}{1+\alpha\tau^{-1}e_1}\nu,
 \Phi^\bot
 \right)_{L_\rho^2(|z|>\tau^\frac{1}{4})}
 \right|
 \\
 &\lesssim
 \tau^{-\frac{1}{2}}
 (|\nu|,|\Phi^\bot|)_{L_\rho^2(|z|<\tau^\frac{1}{4})}
 +
 (|\nu|,|\Phi^\bot|)_{L_\rho^2(|z|>\tau^\frac{1}{4})}.
 \end{align*}
We apply \eqref{3.7} to get
\begin{align*}
 \left|
 \left(
 \left(
 \frac{2\alpha\tau^{-1}e_1}{1+\alpha\tau^{-1}e_1}
 \nu
 \right)^\bot,
 \Phi^\bot
 \right)_\rho
 \right|
 &\lesssim
 \tau^{-\frac{1}{2}}
 \|\nu\|_\rho\|\Phi^\bot\|_\rho
 +
 \tau^{-\frac{1}{4}}
 \|\nu\|_\rho\||z|\Phi^\bot\|_\rho
 \\
 &\lesssim
 \tau^{-\frac{1}{2}}
 \|\nu\|_\rho\|\Phi^\bot\|_\rho
 +
 \tau^{-\frac{1}{4}}
 \|\nu\|_\rho\|\Phi^\bot\|_{H_\rho^1}.
 \end{align*}
Therefore from Lemma \ref{L7.4} and \eqref{7.8},
we deduce that
 \begin{align*}
 \frac{1}{2}\frac{d}{d\tau}
 \|\Phi^\bot\|_\rho^2
 &<
 -\frac{1}{2}
 \|\Phi^\bot\|_\rho^2
 +
 c
 \left(
 \tau^{-2}
 |{\bf a}|^2
 +
 \tau^{-\frac{9}{2}}
 \|\nu\|_Y^2
 +
 \tau^{-6}
 \right)
 \\
 &<
 -\frac{1}{2}
 \|\Phi^\bot\|_\rho^2
 +
 c\tau^{-\frac{9}{2}}
 \quad\text{if } \nu\in Y.
 \end{align*}
This gives
 \[
 \|\Phi^\bot(\tau)\|_\rho
 <
 e^{-\frac{1}{2}(\tau-\tau_0)}\|\Phi^\bot(\tau_0)\|_\rho+c\tau^{-\frac{9}{4}}.
 \]
From definition of $\Phi^\bot(\tau_0)$
(see \eqref{7.7})
and $\chi_\text{out}=1$ for $|z|<\frac{K}{4}\sqrt\tau$
(see \eqref{7.3} for definition of $\chi_\text{out}$),
we easily see that
 \begin{align*}
 \|\Phi^\bot(\tau_0)\|_\rho
 &=
 \|\left\{ ({\bf d}\cdot{\bf e})\chi_\text{out} \right\}^\bot\|_\rho
 =
 \|\left\{ ({\bf d}\cdot{\bf e})(\chi_\text{out}-1) \right\}^\bot\|_\rho
 \\
 &\lesssim
 |{\bf d}|
 \cdot
 \|\,|{\bf e}|(\chi_\text{out}-1)\|_\rho
 \lesssim
 |{\bf d}|
 \left( \int_{|z|>\frac{K}{4}\sqrt{\tau}}|z|^4e^{-\frac{|z|^2}{4}}dz \right)^\frac{1}{2}
 \\
 &\lesssim
 |{\bf d}|
 \left( K\sqrt\tau_0 \right)^\frac{n+2}{2}
 e^{-\frac{K^2}{128}\tau_0}.
 \end{align*}
Since ${\bf d}$ is defined  by \eqref{7.6},
we verify from \eqref{7.8} that
 \begin{equation}\label{7.9}
 |{\bf d}|
 \lesssim
 |{\bf a}(\tau_0)|
 \lesssim
 \tau_0^{-2}\log\tau_0
 \qquad\text{if } \nu\in Y.
 \end{equation}
Therefore
it holds that $\|\Phi^\bot(\tau_0)\|_\rho\lesssim e^{-\frac{1}{2}\tau_0}$ if $\nu\in Y$.
From this estimate,
we deduce that
 \begin{align*}
 \|\Phi^\bot(\tau)\|_\rho
 <
 e^{-\frac{1}{2}(\tau-\tau_0)}\|\Phi^\bot(\tau_0)\|_\rho
 +
 c\tau^{-\frac{9}{4}}
 \lesssim
 \tau^{-\frac{9}{4}}
 \qquad\text{if } \nu\in Y.
 \end{align*}
This relation shows that the mapping $\nu\in Y\mapsto \Phi^\bot\in Y$ is well defined.
We next verify that this mapping is contractive in $Y$.
Let $\nu_i\in Y$ ($i=1,2$).
We define $({\bf a}_i(\tau),{\bf d}_i,\Phi_i^\bot(z,\tau))$ ($i=1,2$)
by \eqref{7.5} - \eqref{7.7}.
Then the function $\Phi_1^\bot-\Phi_2^\bot$ satisfies
 \[
 \begin{cases}
 \dis
 (\Phi_1^\bot-\Phi_2^\bot)_\tau
 =
 A_z(\Phi_1^\bot-\Phi_2^\bot)
 +
 (\Phi_1^\bot-\Phi_2^\bot)
 \\[2mm] \dis
 \hspace{5mm}
 -
 \left(
 \frac{2\alpha\tau^{-1}e_1+2\chi_1}{1+\alpha\tau^{-1}e_1}
 \{({\bf a}_1-{\bf a}_2)\cdot{\bf e}\}
 \right)^\bot
 -
 \left(
 \frac{2\alpha\tau^{-1}e_1+2\chi_1}{1+\alpha\tau^{-1}e_1}(\nu_1-\nu_2)
 \right)^\bot
 & \text{in } \R^6\times(\tau_0,\infty),
 \\[6mm]
 (\Phi_1^\bot-\Phi_2^\bot)(\tau_0)
 =
 \left\{
 (({\bf d}_1-{\bf d}_2)\cdot{\bf e})\chi_\text{out}
 \right\}^\bot.
 \end{cases}
 \]
In the same manner as above,
we obtain the same differential inequality.
 \begin{align*}
 \frac{1}{2}\frac{d}{d\tau}
 \|\Phi_1^\bot-\Phi_2^\bot\|_\rho^2
 &<
 -\frac{1}{2}
 \|\Phi_1^\bot-\Phi_2^\bot\|_\rho^2
 +
 c
 \left(
 \tau^{-2}|{\bf a}_1-{\bf a}_2|^2
 +
 \tau^{-\frac{9}{2}}
 \|\nu_1-\nu_2\|_Y^2
 \right).
 \end{align*}
Furthermore it holds from \eqref{7.5} that
 \begin{align*}
 |{\bf a}_1(\tau)-{\bf a}_2(\tau)|
 &\lesssim
 \left|
 M(\tau)
 \int_\tau^\infty I_1N(\tau')({\bf q}_1-{\bf q}_2)(\tau')d\tau'
 \right|
 +
 \left|
 M(\tau)
 \int_{\tau_0}^\tau I_2N(\tau')({\bf q}_1-{\bf q}_2)(\tau')d\tau'
 \right|
 \\
 &\lesssim
 \left|
 \begin{pmatrix}
 \tau^{-4}\\
 \tau^{-5}
 \end{pmatrix}
 \right|
 \|\nu_1-\nu_2\|_Y
 +
 \left|
 \begin{pmatrix}
 \tau^{-3}\log\tau \\
 \tau^{-2}\log\tau
 \end{pmatrix}
 \right|
 \|\nu_1-\nu_2\|_Y
 \\
 &\lesssim
 \tau^{-2}\log\tau
 \|\nu_1-\nu_2\|_Y.
 \end{align*}
Therefore since
 \begin{align*}
 \|(\Phi_1^\bot-\Phi_2^\bot)(\tau_0)\|_\rho
 &=
 \left\|
 \left\{
 (({\bf d}_1-{\bf d}_2)\cdot{\bf e})\chi_\text{out}
 \right\}^\bot
 \right\|_\rho
 \lesssim
 |{\bf d}_1-{\bf d}_2|
 \cdot
 \|\chi_\text{out}-1\|_\rho
 \\
 &\lesssim
 |({\bf a}_1-{\bf a}_2)(\tau_0)|
 e^{-\frac{1}{2}\tau_0}
 \lesssim
 e^{-\frac{1}{2}\tau_0}
 \|\nu_1-\nu_2\|_Y,
 \end{align*}
we deduce that
 \begin{align*}
 \|(\Phi_1^\bot-\Phi_2^\bot)(\tau)\|_\rho
 &<
 e^{-\frac{1}{2}(\tau-\tau_0)}
 \|(\Phi_1^\bot-\Phi_2^\bot)(\tau_0)\|_\rho
 +
 c\tau^{-\frac{9}{4}}
 \|\nu_1-\nu_2\|_Y
 \\
 &\lesssim
 \tau^{-\frac{9}{4}}
 \|\nu_1-\nu_2\|_Y.
 \end{align*}
This shows the claim.
From the Banach fixed point theorem,
there exits a unique fixed point $\Phi^\bot=\nu\in\ Y$ satisfying
 \[
 \begin{cases}
 \dis
 \pa_\tau\Phi^\bot
 =
 A_z\Phi^\bot
 +
 \Phi^\bot
 -
 \left(
 \frac{2\alpha\tau^{-1}e_1+2\chi_1}{1+\alpha\tau^{-1}e_1}
 ({\bf a}\cdot{\bf e})
 \right)^\bot
 -
 \left(
 \frac{2\alpha\tau^{-1}e_1+2\chi_1}{1+\alpha\tau^{-1}e_1}
 \Phi^\bot
 \right)^\bot
 \\[4mm]
 \hspace{15mm}
 +
 e^{-2\tau}F_\text{out}^\bot
 +
 e^{-2\tau}\mu^\bot
 +
 e^{-2\tau}g_\text{out}^\bot
 +
 e^{-2\tau}N_\text{out}^\bot
 & \text{in } \R^6\times(\tau_0,\infty),
 \\[3mm]
 \Phi^\bot(\tau_0)
 =
 \left\{
 ({\bf d}\cdot {\bf e})\chi_\text{out}
 \right\}^\bot.
 \end{cases}
 \]
As is stated before,
$\varphi={\bf a}\cdot{\bf e}+\Phi^\bot$ gives a solution of \eqref{7.2} - \eqref{7.3}.
Furthermore by the above construction,
it holds that
 \begin{align}\label{7.10}
 \|\varphi(\tau)\|_\rho
 &=
 |{\bf a}(\tau)|+\|\Phi^\bot(\tau)\|_\rho
 \lesssim
 \tau^{-2}\log\tau.
 \end{align}
We finally remark that
\eqref{7.2} - \eqref{7.3} admits a unique solution $\varphi(z,\tau)$
in the class $\varphi(z,\tau)$ satisfying
 \begin{enumerate}[(i)]
 \item $\varphi^\bot\in Y$,
 \item ${\bf a}(\tau)=(\varphi(\tau),{\bf e})_\rho$ is given by \eqref{7.5},
 \item ${\bf d}$ is given by \eqref{7.6},
 \end{enumerate}
where ${\bf q}(\tau)$ in \eqref{7.5} is defined by
 \[
 {\bf q}(\tau)
 =
 -
 \left(
 \frac{2e\alpha\tau^{-1}e_1}{1+\alpha\tau^{-1}e_1}\varphi^\bot(\tau),
 {\bf e}
 \right)_\rho.
 \]
The uniqueness is a consequence of the contractive property of the mapping
$\nu\in Y\mapsto\Phi^\bot\in Y$ (see above).
We will use this uniqueness result to confirm
the continuity of the mapping $(\lambda(t),\epsilon(y,t)) \mapsto \varphi(z,\tau)$.

\subsection{Pointwise estimate}
\label{S7.3}
Throughout this subsection,
$\varphi(z,\tau)$ represents a solution of \eqref{7.2} - \eqref{7.3}
constructed in Section \ref{S7.2}.
We here provide a pointwise estimate of $\varphi(z,\tau)$ to show $W(x,t)\in X_\delta$.
Let $r_1\in(0,1)$ and consider
 \begin{equation}\label{7.11}
 \begin{cases}
 \dis
 \pa_\tau\varphi_1
 =
 A_z\varphi_1
 +
 \left(
 1-\frac{2\alpha\tau^{-1}e_1+2\chi_1}{1+\alpha\tau^{-1}e_1}
 \right)
 \varphi_1
 +
 e^{-2\tau}F_\text{out}
 {\bf 1}_{|z|<\frac{1}{\tau}}
 +
 e^{-2\tau}N_\text{out}
 {\bf 1}_{|y|<2R}
 \\
 &\hspace{-30mm}\text{for } |z|<4r_1,\quad \tau>\tau_0,
 \\
 \varphi_1=0
 &\hspace{-30mm}\text{for } |z|=4r_1,\quad \tau>\tau_0,
 \\
 \varphi_1
 =
 0
 &\hspace{-30mm}\text{for } \tau=\tau_0.
 \end{cases}
 \end{equation}
We put
 \[
 \Psi_1(y,s)=
 e^\tau\varphi_1(e^\frac{\tau}{2}\lambda y,\tau),
 \qquad
 T-t=e^{-\tau},
 \qquad
 s=\int_0^t\frac{dt'}{\lambda(t')^2}.
 \]
The function $\Psi(y,s)$ solves
 \begin{align}\label{7.12}
 \begin{cases}
 \dis
 \pa_s\Psi_1
 =
 \Delta_y\Psi_1
 +
 e^\tau\lambda^2
 \left(
 2
 -
 \frac{2\alpha\tau^{-1}e_1+2\chi_1}{1+\alpha\tau^{-1}e_1}
 \right)
 \Psi_1
 +
 (\lambda\dot\lambda)
 y\cdot\nabla_y\Psi_1
 \\[2mm]
 \hspace{12.5mm}
 +
 \lambda^2F_\text{out}
 {\bf 1}_{|z|<\frac{1}{\tau}}
 +
 \lambda^2N_\text{out}
 {\bf 1}_{|y|<2R}
 &\hspace{-20mm}
 \text{for } |y|<4r_1e^{-\frac{\tau}{2}}\lambda^{-1},\quad s>0,
 \\[2mm]
 \Psi_1=0
 &\hspace{-20mm}
 \text{for } |y|=4r_1e^{-\frac{\tau}{2}}\lambda^{-1},\quad s>0,
 \\
 \Psi_1
 =
 0
 &\hspace{-20mm}\text{for } s=0.
 \end{cases}
 \end{align}
We here apply a comparison argument to  derive a priori estimate of $\Psi_1(y,s)$.
Let
 \[
 \bar\Psi_1(y,s)
 =
 \frac{e^\tau}{R^\frac{1}{8}\tau^\frac{3}{2}}\frac{1}{(1+|y|)^\frac{5}{4}}.
 \]
Since $e_1(z)={\sf c}_1(|z|^2-2n)>0$ for $|z|>\sqrt{2n}$ (see \eqref{3.4})
and $\lambda\dot\lambda=-(\frac{5}{4}+o(1))e^\tau\lambda^2$ (see \eqref{6.3}),
we verify that
 \begin{align*}
 {\cal L}\bar\Psi_1
 &=
 \pa_s\bar\Psi_1
 -
 \Delta_y\bar\Psi_1
 -
 e^\tau\lambda^2
 \left(
 2
 -
 \frac{2\alpha\tau^{-1}e_1+2\chi_1}{1+\alpha\tau^{-1}e_1}
 \right)
 \bar\Psi_1
 -
 (\lambda\dot\lambda)
 y\cdot\nabla_y\bar\Psi_1
 \\
 &=
 \left(
 \frac{\pa \tau}{\pa s}
 \left( 1-\frac{3}{2\tau} \right)
 -
 \frac{\frac{5}{4}(\frac{9}{4}-\frac{(n-1)(1+|y|)}{|y|})}{(1+|y|)^2}
 -
 e^\tau\lambda^2
 \left(
 2
 -
 \frac{2\alpha\tau^{-1}e_1}{1+\alpha\tau^{-1}e_1}
 \right)
 +
 \frac{5\lambda\dot\lambda|y|}{4(1+|y|)}
 \right)
 \bar\Psi_1
 \\
 &>
 \left(
 \frac{55}{16(1+|y|)^2}
 -
 \left( 2+c\tau^{-1}+\frac{25}{16}(1+o(1)) \right)e^\tau\lambda^2
 \right)
 \bar\Psi_1
 \\
 &>
 \left(
 \frac{3}{(1+|y|)^2}
 -
 4e^\tau\lambda^2
 \right)
 \bar\Psi_1
 \\
 &=
 \frac{3-4e^\tau\lambda^2(1+|y|)^2}{(1+|y|)^2}
 \bar\Psi_1.
 \end{align*}
We here note that (see \eqref{6.3}),
 \begin{align*}
 e^\tau\lambda^2(1+|y|)^2
 &\lesssim
 e^\tau\lambda^2
 +
 e^\tau\lambda^2
 \left( 4r_1e^{-\frac{\tau}{2}}\lambda^{-1} \right)^2
 \\
 &\lesssim
 \tau^{-\frac{15}{4}}e^{-\frac{3}{2}\tau}
 +
 16r_1^2
 \qquad
 \text{for } |y|<4r_1e^{-\frac{\tau}{2}}\lambda^{-1}.
 \end{align*}
Therefore
there exists $r_1\in(0,1)$ such that
 \[
 {\cal L}\bar\Psi_1>\frac{ \bar\Psi_1}{(1+|y|)^2}
 \qquad\text{for } |y|<4r_1e^{-\frac{\tau}{2}}\lambda^{-1},\quad s>0.
 \]
We fix $r_1\in(0,1)$ as above.
We estimate the last two terms in \eqref{7.12}.
From Lemma \ref{L7.1},
we see that
 \begin{align*}
 \lambda^2
 |F_\text{out}|
 {\bf 1}_{|z|<\frac{1}{\tau}}
 &\lesssim
 \frac{e^\tau}{\tau^\frac{3}{2}}
 \left(
 \frac{e^{-\tau}{\bf 1}_{|y|<2R}}{1+|y|^\frac{11}{2}}
 +
 \frac{\log |y|}{1+|y|^\frac{7}{2}}
 {\bf 1}_{R<|y|<2R}
 +
 \frac{{\bf 1}_{|y|>R}}{1+|y|^4}
 \right)
 \\
 &\lesssim
 R^\frac{1}{8}
 \left(
 \frac{e^{-\tau}{\bf 1}_{|y|<2R}}{1+|y|^\frac{9}{4}}
 +
 \frac{\log |y|}{1+|y|^\frac{1}{4}}
 {\bf 1}_{R<|y|<2R}
 +
 \frac{{\bf 1}_{|y|>R}}{1+|y|^\frac{3}{4}}
 \right)
 \frac{\bar\Psi_1}{(1+|y|)^2}
 \\
 &\lesssim
 \left(
 R^\frac{1}{8}e^{-\tau}
 +
 \frac{\log R}{R^\frac{1}{8}}
 +
 \frac{1}{R^\frac{5}{8}}
 \right)
 \frac{\bar\Psi_1}{(1+|y|)^2}.
 \end{align*}
Furthermore
we verify from Lemma \ref{L7.3} that
 \begin{align*}
 \lambda^2|N_\text{out}|
 {\bf 1}_{|y|<2R}
 &\lesssim
 \frac{e^{2\tau}\lambda^2}{\tau^3}
 \frac{R^8(\log R)^2}{1+|y|^{11}}
 {\bf 1}_{|y|<2R}
 \\
 &\lesssim
 \frac{e^\tau\lambda^2}{\tau^\frac{3}{2}}
 \frac{R^\frac{65}{8}(\log R)^2}{1+|y|^\frac{31}{4}}
 {\bf 1}_{|y|<2R}
 \frac{\bar\Psi_1}{(1+|y|)^2}.
 \end{align*}
Therefore
it follows that
 \begin{align*}
 \lambda^2|F_\text{out}|
 {\bf 1}_{|z|<\tau^{-1}}
 +
 \lambda^2|N_\text{out}|
 {\bf 1}_{|y|<2R}
 \lesssim
 \frac{1}{2}
 \frac{\bar\Psi_1}{(1+|y|)^2}.
 \end{align*}
A comparison argument shows
 \[
 |\Psi_1(y,s)|<\bar\Psi_1(y,s)
 =
 \frac{e^\tau}{R^\frac{1}{8}\tau^\frac{3}{2}}\frac{1}{(1+|y|)^\frac{5}{4}}
 \qquad\text{for } |y|<4r_1e^{-\frac{\tau}{2}}\lambda^{-1},\quad s>0.
 \]
We next derive a gradient estimate of $\Psi_1(y,s)$.
Since $|\lambda\dot\lambda y|\lesssim(T-t)^\frac{3}{4}\tau^{-\frac{15}{8}}$
for $|y|<4r_1e^{-\frac{\tau}{2}}\lambda^{-1}$,
a parabolic estimate (see Lemma \ref{L3.5} (i) - (ii)) implies
 \begin{align*}
 |\nabla_y\Psi_1(y,s)|
 &\lesssim
 \sup_{\max(0,s-1)<s'<s}\
 \sup_{|y-y'|<2}
 \left(
 |\Psi_1(y',s')|
 +
 \lambda^2|F_\text{out}|{\bf 1}_{|z|<\frac{1}{\tau}}
 +
 \lambda^2|N_\text{out}|{\bf 1}_{|y|<2R}
 \right)
 \\
 &\lesssim
 \frac{e^\tau}{R^\frac{1}{8}\tau^\frac{3}{2}}\frac{1}{(1+|y|)^\frac{5}{4}}
 \qquad\text{for } |y|<2r_1e^{-\frac{\tau}{2}}\lambda^{-1},\ s>0.
 \end{align*}
From these estimates,
it follows that
 \begin{align}\label{7.13}
 |\varphi_1(z,\tau)|
 \lesssim
 \frac{1}{R^\frac{1}{8}\tau^\frac{3}{2}}
 \frac{1}{1+|y|^\frac{5}{4}},
 \qquad
 |\nabla_z\varphi_1(z,\tau)|
 \lesssim
 \frac{e^{-\frac{\tau}{2}}}{R^\frac{1}{8}\lambda\tau^\frac{3}{2}}
 \frac{1}{1+|y|^\frac{5}{4}}
 \end{align}
for $|z|<2r_1$, $\tau>\tau_0$.
We put
 \[
 \varphi_2=\varphi-\varphi_1\chi_{r_1},
 \qquad
 \chi_{r_1}(z)=\eta\left( \frac{|z|}{r_1} \right).
 \]
From \eqref{7.1}, \eqref{7.3} and \eqref{7.11},
we find that
 \begin{align}\label{7.14}
 \begin{cases}
 \dis
 \pa_\tau\varphi_2
 =
 A_z\varphi_2
 +
 \left( 1-\frac{2\alpha\tau^{-1}e_1+2\chi_1}{1+\alpha\tau^{-1}e_1} \right)
 \varphi_2
 +
 \underbrace{e^{-2\tau}F_\text{out}{\bf 1}_{|z|>\frac{1}{\tau}}
 +e^{-2\tau}\mu+e^{-2\tau}g_\text{out}
 }
 \\[4mm]
 \hspace{15mm}
 \underbrace{
 +e^{-2\tau}N_\text{out}{\bf 1}_{|y|>2R}
 +
 \left(
 2\nabla_z\varphi_1\cdot\nabla_z\chi_{r_1}+\varphi_1\Delta_z\chi_{r_1}
 -\frac{\varphi_1}{2}z\cdot\nabla_z\chi_{r_1}
 \right)}_{=:S}
 & \text{in } \R^6\times(\tau_0,\infty),
 \\[2mm]
 \varphi_2(\tau_0)=({\bf d}\cdot{\bf e})\chi_\text{out}.
 \end{cases}
 \end{align}
From Lemma \ref{L7.1} - Lemma \ref{L7.3},
we verify that
 \begin{align*}
 e^{-2\tau}|F_\text{out}|{\bf 1}_{|z|>\frac{1}{\tau}}
 &\lesssim
 \tau^\frac{1}{4}
 e^{-\frac{3}{2}\tau}{\bf 1}_{|z|<\frac{2}{\tau}}
 +
 \left(
 \frac{e^{-\frac{3}{2}\tau}}{\tau^\frac{21}{4}}
 \frac{1}{|z|^2}
 +
 \frac{e^{-\frac{3}{2}\tau}}{\tau^\frac{15}{4}}
 \frac{1}{|z|^4}
 \right)
 {\bf 1}_{|z|>1},
 \\
 e^{-2\tau}|g_\text{out}|
 &\lesssim
 {\bf 1}_{|z|<\frac{2}{\tau}}
 +
 \tau^\frac{1}{4}e^{-\frac{3}{2}\tau}
 {\bf 1}_{|z|<1}
 +
 \frac{e^{-\frac{3}{2}\tau}}{\tau^\frac{15}{4}}
 \frac{1}{|z|^4}
 {\bf 1}_{|z|>1},
 \\
 e^{-2\tau}|\mu|
 &\lesssim
 \tau^{-2}
 (1+|z|^2)
 {\bf 1}_{|z|<K\sqrt{\tau}}
 +
 \frac{{\bf 1}_{|z|>K\sqrt{\tau}}}{|z|^2}
 \\
 &\lesssim
 \tau^{-2}
 (1+|z|^2)
 {\bf 1}_{|z|<K\sqrt{\tau}}
 +
 \frac{K^{-\frac{7}{4}}}{\tau^\frac{7}{8}}
 \frac{{\bf 1}_{|z|>K\sqrt{\tau}}}{|z|^\frac{1}{4}},
 \\
 e^{-2\tau}|N_\text{out}|{\bf 1}_{|y|>2R}
 &\lesssim
 {\bf 1}_{|z|<\frac{2}{\tau}}
 +
 \tau^{-3}
 (1+|z|^4)
 {\bf 1}_{|z|<K\sqrt\tau}
 +
 \frac{1}{\tau^\frac{7}{8}}
 \frac{{\bf 1}_{|z|>K\sqrt\tau}}{|z|^\frac{1}{4}}.
 \end{align*}
Furthermore
we get from \eqref{7.13} that
 \begin{align*}
 \left|
 2\nabla_z\varphi_1\cdot\nabla_z\chi_{r_1}+\varphi_1\Delta_z\chi_{r_1}
 -\frac{\varphi_1}{2}z\cdot\nabla_z\chi_{r_1}
 \right|
 &\lesssim
 \frac{e^{-\frac{\tau}{2}}}{R^\frac{1}{8}\lambda\tau^\frac{3}{2}}
 \frac{{\bf 1}_{r_1<|z|<2r_1}}{1+|y|^\frac{5}{4}}
 \lesssim
 \frac{e^{\frac{1}{8}\tau}\lambda^\frac{1}{4}}{R^\frac{1}{8}\tau^\frac{3}{2}}
 \frac{{\bf 1}_{r_1<|z|<2r_1}}{|z|^\frac{5}{4}}
 \\
 &\lesssim
 \frac{e^{-\frac{3}{16}\tau}}{R^\frac{1}{8}\tau^\frac{63}{32}}
 \frac{{\bf 1}_{r_1<|z|<2r_1}}{|z|^\frac{5}{4}}.
 \end{align*}
Therefore it follows that
 \begin{align}\label{7.15}
 |S(z,\tau)|
 &\lesssim
 {\bf 1}_{|z|<\frac{2}{\tau}}
 +
 \left(
 \tau^{-2}
 +
 \tau^{-2}|z|^2
 +
 \tau^{-3}|z|^4
 \right)
 {\bf 1}_{|z|<K\sqrt{\tau}}
 +
 \frac{1}{\tau^\frac{7}{8}}
 \frac{{\bf 1}_{|z|>K\sqrt{\tau}}}{|z|^\frac{1}{4}}
 \nonumber
 \\
 &\lesssim
 {\bf 1}_{|z|<\frac{2}{\tau}}
 +
 (\tau^{-2}+K^2\tau^{-2}|z|^2)
 {\bf 1}_{|z|<K\sqrt{\tau}}
 +
 \frac{1}{\tau^\frac{7}{8}}
 \frac{{\bf 1}_{|z|>K\sqrt{\tau}}}{|z|^\frac{1}{4}}. 
 \end{align}
Let $r_2\in(1,\sqrt{\tau_0})$ be a large constant which is determined later,
and fix $p=\frac{8n}{15}>\frac{n}{2}$.
From \eqref{7.15},
we see that
 \begin{equation}\label{7.16}
 \|S(\tau)\|_{L^p(|z|<2r_2)}
 \lesssim
 \|{\bf 1}_{|z|<\frac{2}{\tau}}\|_{L^p(|z|<2r_2)}
 +
 \tau^{-2}
 \lesssim
 \tau^{-\frac{n}{p}}+\tau^{-2}
 \lesssim
 \tau^{-\frac{15}{8}}.
 \end{equation}
We apply Lemma \ref{L3.4} (i) to get
 \begin{align*}
 \sup_{|z|<r_2}|\varphi_2(z,\tau)|
 &\lesssim
 \sup_{\tau'\in(\tau-1,\tau)}
 \left(
 \|\varphi_2(\tau')\|_{L^2(|z|<2r_2)}
 +
 \|S(\tau')\|_{L^p(|z|<2r_2)}
 \right)
 \\
 &\lesssim
 \sup_{\tau'\in(\tau-1,\tau)}
 \left(
 \|\varphi_2(\tau')\|_\rho+\tau'^{-\frac{15}{8}}
 \right)
 \qquad
 \text{for } \tau>\tau_0+1.
 \end{align*}
Since
 $\|\varphi_1\|_\rho\lesssim
 R^{-\frac{1}{8}}\tau^{-\frac{3}{2}}
 (e^{\frac{\tau}{2}}\lambda)^\frac{5}{4}
 \lesssim
 R^{-\frac{1}{8}}\tau^{-\frac{123}{32}}e^{-\frac{15}{16}\tau}$
 (see \eqref{7.13})
and
 $\|\varphi\|_\rho\lesssim\tau^{-2}\log\tau$ (see \eqref{7.10}),
it holds that $\|\varphi_2\|_\rho\lesssim\tau^{-2}\log\tau$.
Therefore we obtain
 \begin{align*}
 \sup_{|z|<r_2}|\varphi_2(z,\tau)|
 \lesssim
 \tau^{-\frac{15}{8}}
 \qquad
 \text{for } \tau>\tau_0+1.
 \end{align*}
To investigate the behavior of $\varphi_2(z,\tau)$ for
$\tau\in(\tau_0,\tau_0+1)$,
we put
 \[
 \varphi_3(z,\tau)=\varphi_2(z,\tau)-\varphi_2(z,\tau_0)
 =
 \varphi_2(z,\tau)-({\bf d}\cdot{\bf e})\chi_\text{out}.
 \]
Since $\varphi_3(z,\tau)$ satisfies a similar equation to \eqref{7.14} with $\varphi_3(\tau_0)=0$,
we verify from a local parabolic estimate (see Lemma \ref{L3.4} (ii)) that
 \begin{align}\label{7.17}
 \sup_{|z|<r_2}|\varphi_3(z,\tau)|
 &\lesssim
 \sup_{\tau'\in(0,\tau)}
 \left(
 \|\varphi_3(\tau')\|_\rho
 +
 \|S(\tau')\|_{L^p(|z|<2r_2)}
 \right)
 +
 |{\bf d}|
 \nonumber
 \\
 &\lesssim
 \sup_{\tau'\in(0,\tau)}
 \left(
 \|\varphi_2(\tau')\|_\rho
 +
 \|S(\tau')\|_{L^p(|z|<2r_2)}
 \right)
 +
 |{\bf d}|
 \nonumber
 \\
 &\lesssim
 \tau^{-\frac{15}{8}}+|{\bf d}|
 \lesssim
 \tau^{-\frac{15}{8}}
 \qquad
 \text{for } \tau\in(\tau_0,\tau_0+1).
 \end{align}
In the last inequality,
we use $|{\bf d}|\lesssim\tau_0^{-2}\log\tau_0$ (see \eqref{7.9}).
Therefore we conclude
 \begin{align}\label{7.18}
 \sup_{|z|<r_2}|\varphi_2(z,\tau)|
 \lesssim
 \tau^{-\frac{15}{8}}
 \qquad
 \text{for } \tau>\tau_0.
 \end{align}
We next consider the case $|z|>r_2$.
Let
 \[
 \bar\varphi_2(z,\tau)
 =
 \tau^{-\frac{7}{4}}|z|^\frac{17}{8}.
 \]
Since $e_1(z)={\sf c}_1(|z|^2-2n)$ for some ${\sf c}_1>0$,
we verify that
 \begin{align*}
 {\cal L}\bar\varphi_2
 &=
 \pa_\tau\bar\varphi_2
 -
 A_z\varphi_2
 -
 \left( 1-\frac{2\alpha\tau^{-1}e_1+2\chi_1}{1+\alpha\tau^{-1}e_1} \right)
 \bar\varphi_2
 \\
 &>
 \left(
 -\frac{7}{4\tau}
 -
 \frac{17}{8}\left( \frac{1}{8}+n \right)\frac{1}{|z|^2}
 +
 \frac{17}{16}
 -
 \left(
 1+\frac{c}{\tau}
 \right)
 \right)
 \bar\varphi_2
 \\
 &>
 \left(
 -\frac{c}{\tau}
 -
 \frac{c}{|z|^2}
 +
 \frac{1}{16}
 \right)
 \bar\varphi_2 \qquad\text{for } |z|>r_2.
 \end{align*}
We here fix $r_2>0$ large enough such that
 ${\cal L}\bar\varphi_2>\frac{1}{32}\bar\varphi_2$ for $|z|>r_2$.
On the other hand,
it holds from \eqref{7.15} that
 \begin{align*}
 |S(z,\tau)|
 &\lesssim
 \tau^{-2}|z|^2
 {\bf 1}_{|z|<K\sqrt{\tau}}
 +
 \frac{1}{\tau^\frac{7}{8}}
 \frac{{\bf 1}_{|z|>K\sqrt{\tau}}}{|z|^\frac{1}{4}}
 \\
 &\lesssim
 \tau^{-2}|z|^2
 {\bf 1}_{|z|<K\sqrt{\tau}}
 +
 K^{-\frac{19}{8}}
 \tau^{-\frac{33}{16}}
 |z|^{\frac{17}{8}}
 {\bf 1}_{|z|>K\sqrt{\tau}}
 \\
 &\lesssim
 \left(
 \tau^{-\frac{1}{4}}
 \frac{1}{|z|^\frac{1}{8}}
 {\bf 1}_{|z|<K\sqrt{\tau}}
 +
 \tau^{-\frac{5}{16}}
 {\bf 1}_{|z|>K\sqrt{\tau}}
 \right)
 \bar\varphi_2
 \qquad\text{for } |z|>r_2.
 \end{align*}
From definition of $\varphi_2(\tau_0)$ (see \eqref{7.14})
and $|{\bf d}|\lesssim\tau_0^{-2}\log\tau_0$ (see \eqref{7.9}),
we note that
 \begin{align*}
 |\varphi_2(\tau_0)|
 &<
 |{\bf d}\cdot{\bf e}|
 \chi_\text{out}
 \lesssim
 |{\bf d}|(1+|z|^2)
 {\bf 1}_{|z|<\frac{K}{2}\sqrt{\tau}_0}
 \lesssim
 \tau_0^{-2}\log\tau_0
 (1+|z|^2)
 {\bf 1}_{|z|<\frac{K}{2}\sqrt{\tau}_0}.
 \end{align*}
Furthermore since $|\varphi_2(z,\tau)|\lesssim\tau^{-\frac{15}{8}}$
for $|z|=r_2$ (see  \eqref{7.18}),
it holds that $|\varphi_2(z,\tau)|<\bar\varphi_2(z,\tau)$ on $|z|=r_2$, $\tau>\tau_0$.
Therefore a comparison argument gives
 \[
 |\varphi_2(z,\tau)|<\bar\varphi_2(z,\tau)
 \qquad\text{for } |z|>r_2,\quad \tau>\tau_0.
 \]
From this estimate,
we obtain
 \begin{equation}\label{7.19}
 |\varphi_2(z,\tau)|
 <
 \tau^{-\frac{7}{4}}|z|^\frac{17}{8}
 <
 K^\frac{1}{8}\tau^{-\frac{27}{16}}|z|^2
 \qquad\text{for } r_2<|z|<K\sqrt{\tau},\quad \tau>\tau_0.
 \end{equation}
We finally derive an estimate for $|z|>K\sqrt{\tau}$.
Since $e_1(z)={\sf c}_1(|z|^2-2n)$ for some ${\sf c}_1>0$,
we can fix $K>0$ such that
 \[
 \frac{2\alpha\tau^{-1}e_1}{1+\alpha\tau^{-1}e_1}
 >
 \frac{7}{4}
 \qquad\text{for } |z|>K\sqrt{\tau}.
 \]
We define a comparison function as
 \[
 \bar\varphi_{2,\text{out}}(z,\tau)
 =
 \frac{1}{\tau^\frac{1}{2}}\frac{1}{|z|^\frac{1}{8}}.
 \]
From this definition,
we find
 \begin{align*}
 {\cal L}\bar\varphi_{2,\text{out}}
 &=
 \pa_\tau\bar\varphi_{2,\text{out}}
 -
 A_z\bar\varphi_{2,\text{out}}
 -
 \left(
 1-\frac{2\alpha\tau^{-1}e_1+2\chi_1}{1+\alpha\tau^{-1}e_1}
 \right)
 \bar\varphi_{2,\text{out}}
 \\
 &>
 \left(
 -\frac{1}{2\tau}
 -
 \frac{1}{8}\left( \frac{17}{8}-n \right)
 \frac{1}{|z|^2}
 -
 \frac{1}{16}
 +
 \frac{3}{4}
 \right)
 \bar\varphi_{2,\text{out}}
 \\
 &>
 \left(
 -\frac{1}{2\tau}
 +
 \frac{11}{16}
 \right)
 \bar\varphi_{2,\text{out}}
 \qquad\text{for } |z|>K\sqrt{\tau}.
 \end{align*}
Since
$|S(z,\tau)|\lesssim\tau^{-\frac{7}{8}}|z|^{-\frac{1}{4}}
 \lesssim\tau^{-\frac{3}{8}}|z|^{-\frac{1}{8}}\bar\varphi_{2,\text{out}}$
for $|z|>K\sqrt\tau$ (see \eqref{7.15}),
it holds that ${\cal L}\bar\varphi_{2,\text{out}}>S$ for $|z|>K\sqrt\tau$, $\tau>\tau_0$.
Furthermore \eqref{7.19} implies
 \begin{align*}
 |\varphi_2(z,\tau)|
 \lesssim
 \tau^{-\frac{7}{4}}
 |z|^\frac{17}{8}
 \lesssim
 \frac{K^\frac{9}{4}}{\tau^\frac{5}{8}}
 \frac{1}{|z|^\frac{1}{8}}
 <
 \bar\varphi_{2,\text{out}}(z,\tau)
 \qquad\text{for } |z|=K\sqrt{\tau}.
 \end{align*}
Since $\varphi_2(\tau_0)=({\bf d}\cdot{\bf e})\chi_\text{out}=0$
for $|z|>\frac{K}{2}\sqrt{\tau_0}$ (see \eqref{7.3} for definition of $\chi_\text{out}$),
we apply a comparison argument to obtain
 \begin{equation}\label{7.20}
 |\varphi_2(z,\tau)|
 <
 \bar\varphi_{2,\text{out}}(z,\tau)
 =
 \frac{1}{\tau^\frac{1}{2}}
 \frac{1}{|z|^\frac{1}{8}}
 \qquad\text{for } |z|>K\sqrt{\tau},\quad \tau>\tau_0.
 \end{equation}
We now recall that $W(x,t)=e^\tau\varphi(z,\tau)$
and
$\varphi=\varphi_1\chi_{r_1}+\varphi_2$.
Therefore from \eqref{7.13}, \eqref{7.18} - \eqref{7.20},
we conclude
 \begin{align*}
 |W(x,t)|
 &\lesssim
 \frac{e^\tau}{R^\frac{1}{8}\tau^\frac{3}{2}}
 \frac{\chi_{r_1}}{1+|y|^\frac{5}{4}}
 +
 \frac{e^\tau}{\tau^\frac{15}{8}}
 {\bf 1}_{|z|<r_2}
 +
 \frac{e^\tau}{\tau^\frac{27}{16}}|z|^2
 {\bf 1}_{r_2<|z|<K\sqrt{\tau}}
 +
 \frac{1}{\tau^\frac{1}{2}}
 \frac{{\bf 1}_{|z|>K\sqrt{\tau}}}{|z|^\frac{1}{8}}.
 \end{align*}
This shows $W(x,t)\in X_\delta$.

\subsection{Gradient estimates}
\label{S7.4}
Let $\varphi_2(z,\tau)$ be as in Section \ref{S7.3}.
We here derive a gradient estimate of $\varphi_2(z,\tau)$,
We first consider the case $|z|<8$.
Due to \eqref{7.16} - \eqref{7.18} and Lemma \ref{L3.5} (i),
it holds that
\begin{align*}
 \sup_{|z|<8}
 |\nabla_z\varphi_2(z,\tau)|
 &\lesssim
 \sup_{\tau-1<\tau'<\tau}
 \left(
 \sup_{|z|<16}
 |\varphi_2(z,\tau')|
 +
 \|S(\tau')\|_{L_z^p(|z|<16)}
 \right)
 \\
 &\lesssim
 \tau^{-\frac{15}{8}}
 +
 \tau^{-\frac{n}{p}}
 \lesssim
 \tau^{-\frac{n}{p}}
 \qquad\text{for } \tau>\tau_0+1
 \qquad
 (p>n).
 \end{align*}
Furthermore let
 \[
 \varphi_3(z,\tau)
 =
 \varphi_2(z,\tau)-\varphi_2(z,\tau_0)
 =
 \varphi_2(z,\tau)-({\bf d}\cdot{\bf e})\chi_\text{out}.
 \]
From a similar computation to \eqref{7.17}
and Lemma \ref{L3.5} (ii),
we see that
 \begin{align*}
 \sup_{|z|<8}
 |\nabla_z\varphi_3(z,\tau)|
 &\lesssim
 \sup_{\tau_0<\tau'<\tau}
 \left(
 \sup_{|z|<16}
 |\varphi_3(z,\tau')|
 +
 \|S(\tau')\|_{L_z^p(|z|<16)}
 +
 |{\bf d}|
 \right)
 \\
 &\lesssim
 \tau^{-\frac{15}{8}}
 +
 \tau^{-\frac{n}{p}}
 +
 |{\bf d}|
 \lesssim
 \tau^{-\frac{n}{p}}
 \qquad\text{for } \tau>\tau_0+1
 \qquad
 (p>n).
 \end{align*}
As a consequence,
we obtain
 \begin{equation}\label{7.21}
 |\nabla_z\varphi_2(z,\tau)|
 \lesssim
 \tau^{-\frac{n}{p}}
 \qquad\text{for }
 |z|<8,\quad \tau>\tau_0
 \qquad
 (p>n).
 \end{equation}
We next consider the case $|z|>4$.
For any fixed $(z_1,\tau_1)\in\R^6\times(\tau_0+1,\infty)$ with $|z_1|>4$,
we put
 \[
 \tilde\varphi_2(\tilde z,\tilde\tau)
 =
 \varphi_2(\tilde z+e^\frac{\tilde\tau}{2}z_1,\tilde\tau+\tau_1).
 \]
This $\tilde\varphi_2(\tilde z,\tilde\tau)$ solves
 \[
 \pa_{\tilde\tau}\tilde\varphi_2
 =
 A_{\tilde z}\tilde\varphi_2
 +
 \left( 1-\frac{2\alpha\tau^{-1}e_1+2\chi_1}{1+\alpha\tau^{-1}e_1} \right)
 \tilde\varphi_2
 +
 \tilde S(\tilde z,\tilde\tau),
 \]
where $\tilde S(\tilde z,\tilde \tau)=S(z,\tau)$ with
$z=\tilde z+e^\frac{\tilde\tau}{2}z_1$ and $\tau=\tilde\tau+\tau_1$.
We note that
 \[
 \frac{1}{2}|z_1|
 <
 |z|
 =
 |\tilde z+e^\frac{\tilde\tau}{2}z_1|
 <
 2\sqrt{e}|z_1|
 \qquad\text{for } |\tilde z|<2,\quad \tilde\tau\in(0,1).
 \]
Therefore
from \eqref{7.15}, \eqref{7.18} - \eqref{7.19} and Lemma \ref{L3.5} (i),
it holds that if $4<|z_1|<(2\sqrt e)^{-1}K\sqrt\tau$
 \begin{align*}
 \sup_{|\tilde z|<1}
 |\nabla_{\tilde z}\tilde\varphi_2(\tilde z,\tilde\tau)|_{\tilde\tau=1}|
 &\lesssim
 \sup_{\tilde\tau\in(1,0)}
 \left(
 \|\tilde\varphi_2(\tilde\tau)\|_{L^\infty(|\tilde z|<2)}
 +
 \|\tilde S(\tilde\tau)\|_{L^\infty(|\tilde z|<2)}
 \right)
 \nonumber
 \\
 &\lesssim
 \sup_{\tilde\tau\in(0,1)}\
 \sup_{\frac{1}{2}|z_1|<|z|<2\sqrt e|z_1|}
 \left(
 |\varphi_2(z,\tilde\tau+\tau_1)|
 +
 |S(z,\tilde\tau+\tau_1)|
 \right)
 \nonumber
 \\
 &\lesssim
 \tau_1^{-\frac{15}{8}}
 +
 \tau_1^{-\frac{27}{16}}
 |z_1|^2
 +
 \tau_1^{-2}
 |z_1|^2
 \lesssim
 \tau_1^{-\frac{27}{16}}
 |z_1|^2.
 \end{align*}
Since
 $\nabla_{\tilde z}\tilde\varphi_2(\tilde z,\tilde\tau)|_{\tilde z=0, \tilde\tau=1}
 = \nabla_z\varphi_2(\sqrt{e}z_1,1+\tau_1)$,
from this relation,
we deduce that
 \[
 |\nabla_z\varphi_2(\sqrt{e}z_1,1+\tau_1)|
 \lesssim
 \tau_1^{-\frac{27}{16}}|z_1|^2
 \qquad\text{for }
 4<|z_1|<(2\sqrt e)^{-1}K\sqrt\tau,\quad \tau_1>\tau_0+1.
 \]
This gives
 \[
 |\nabla_z\varphi_2(z,\tau)|
 \lesssim
 \tau^{-\frac{27}{16}}
 |z|^2
 \qquad\text{for }
 4\sqrt{e}<|z|<\frac{K}{2}\sqrt\tau,\quad \tau>\tau_0+2.
 \]
In the same way as \eqref{7.17},
we obtain
 \[
 |\nabla_z\varphi_2(z,\tau)|
 \lesssim
 \tau^{-\frac{27}{16}}
 |z|^2
 \qquad\text{for }
 4\sqrt{e}<|z|<\frac{K}{2}\sqrt\tau,\quad \tau\in(\tau_0,\tau_0+2).
 \]
Therefore
it follows that
 \begin{align}\label{7.22}
 |\nabla_z\varphi_2(z,\tau)|
 \lesssim
 \tau^{-\frac{27}{16}}
 |z|^2
 \qquad\text{for }
 4\sqrt{e}<|z|<\frac{K}{2}\sqrt\tau,\quad \tau>\tau_0.
 \end{align}
Since $\varphi=\varphi_1\chi_{r_1}+\varphi_2$,
from \eqref{7.13} and \eqref{7.21} - \eqref{7.22},
we conclude
 \begin{align}\label{7.23}
 |\nabla_z\varphi(z,\tau)|
 &<
 |\nabla_z\varphi_1(z,\tau)|\chi_{r_1}
 +
 |\varphi_1(z,\tau)|\cdot|\nabla_z\chi_{r_1}|
 +
 |\nabla_z\varphi_2(z,\tau)|
 \nonumber
 \\
 &\lesssim
 \frac{e^{-\frac{\tau}{2}}}{R^\frac{1}{8}\lambda\tau^\frac{3}{2}}
 \frac{{\bf 1}_{|z|<1}}{1+|y|^\frac{5}{4}}
 +
 \frac{1}{R^\frac{1}{8}\tau^\frac{3}{2}}
 \frac{{\bf 1}_{r_1<|z|<2r_1}}{1+|y|^\frac{5}{4}}
 +
 \tau^{-\frac{n}{p}}
 {\bf 1}_{|z|<8}
 +
 \tau^{-\frac{27}{16}}
 |z|^2
 {\bf 1}_{|z|>4\sqrt{e}}
 \nonumber
 \\
 &\lesssim
 \frac{e^{-\frac{\tau}{2}}}{R^\frac{1}{8}\lambda\tau^\frac{3}{2}}
 \frac{{\bf 1}_{|z|<1}}{1+|y|^\frac{5}{4}} +
 \tau^{-\frac{n}{p}}
 {\bf 1}_{|z|<8}
 +
 \tau^{-\frac{27}{16}}
 |z|^2
 {\bf 1}_{|z|>8}
 \nonumber
 \\
 &\hspace{40mm}
 \text{for } |z|<\frac{K}{2}\sqrt\tau,\quad \tau>\tau_0
 \qquad
 (p>n).
 \end{align}

\section{Proof of theorems}
\label{S8}
\subsection{Proof of Theorem \ref{Thm1}}
We perform the argument mentioned in Section \ref{S5.2} rigorously.
Let $\tilde w(x,t)\in C(\R^6\times[0,T])$ be an extension of $w(x,t)\in X_\delta$
defined in Section \ref{S5.2},
$(\lambda(t),\epsilon(y,t))$ be a pair of functions constructed
in Section \ref{S6},
and $W(x,t)$ be a function obtained in Section \ref{S7}.
We now define
 \begin{equation}\label{8.1}
 w(x,t)\in X_\delta
 \mapsto
 W(x,t)\in C(\R^6\times[0,T-\delta])\subset C(\R^6\times[0,T)).
 \end{equation}
To apply the Schauder fixed point theorem in $X_\delta$,
we check
 \begin{enumerate}[(i)]
 \setlength{\parskip}{0cm} 
 \setlength{\itemsep}{1mm} 
 \item the mapping $(X_\delta,\|\cdot\|_{C(\R^n\times[0,T-\delta])})\to
 (X_\delta,\|\cdot\|_{C(\R^n\times[0,T-\delta])})$ is well defined,
 \item the mapping is continuous and
 \item the mapping is compact.
 \end{enumerate}
As is stated in the end of Section \ref{S7.3},
we already prove $W(x,t)\in X_\delta$.
Therefore (i) is verified.
As for (ii),
the continuity of the mapping
$(X_\delta,\|\cdot\|_{C(\R^n\times[0,T-\delta])})\to
 (X_\delta,\|\cdot\|_{C(\R^n\times[0,T-\delta])})$
is a consequence of construction of $W(x,t)$
(see Section \ref{S5} - Section \ref{S7}).
We finally check (iii).
Let $\{w_i(x,t)\}_{i=1}^\infty$ be a sequence in $X_\delta$ and
$\{W_i(x,t)\}_{i=1}^\infty$ be as in \eqref{8.1}. 
Since $\{W_i(x,t)\}_{i=1}^\infty\subset X_\delta$,
it holds that
 \[
 \sup_{i\in\N}\
 \sup_{(x,t)\in\R^6\times(0,T-\delta)}
 |W_i(x,t)|
 <
 \infty.
 \]
Therefore by a standard parabolic estimate,
there exists a subsequence $\{W_i(x,t)\}_{i=1}^\infty$ and a limiting function
$W_\infty(x,t)\in X_\delta$ such that
 \begin{equation}\label{8.2}
 \lim_{i\to\infty}\
 \sup_{0<t<T-\delta}\
 \sup_{|x|<R}|W_i(x,t)-W_\infty(x,t)|=0
 \quad\text{for any }  R>0.
 \end{equation}
Furthermore
since $|W_i(x,t)|<{\cal W}(x,t)$ (see \eqref{5.9} - \eqref{5.10}),
it holds that
 \begin{align}\label{8.3}
 |W_i(x,t)|
 &<
 \frac{K^\frac{17}{8}}{(T-t)\tau^\frac{7}{16}}
 \frac{1}{|z|^\frac{1}{8}}
 <
 \frac{K^\frac{17}{8}}{(T-t)^\frac{15}{16}\tau^\frac{7}{16}}
 \frac{1}{|x|^\frac{1}{8}}
 \nonumber
 \\
 &<
 \frac{K^\frac{17}{8}}{\delta^\frac{15}{16}\tau^\frac{7}{16}}
 \frac{1}{|x|^\frac{1}{8}}
 \qquad
 \text{for } |x|>K\tau\sqrt{T-t},\quad t\in(0,T-\delta).
 \end{align}
Combining \eqref{8.2} - \eqref{8.3},
we obtain
 \[
 \lim_{i\to\infty}\|W_i-W_\infty\|_{C(\R^6\times[0,T-\delta])}=0.
 \]
This shows (iii).
From the Schauder fixed point theorem,
there exists $w(x,t)\in X_\delta$ such that $w(x,t)=W(x,t)$.
Since
 $\tilde w(x,t)=w(x,t)$ for $(x,t)\in\R^6\times[0,T-\delta]$ (see Section \ref{S5.2}),
the function
 \[
 u(x,t)=\lambda^{-2}{\sf Q}(y)+\lambda_0^{-2}\sigma T_1(y)\chi_1
 -\Theta(x,t)\chi_2+\lambda^{-2}\epsilon(y,t)\chi_\text{in}+w(x,t)
 \]
gives a solution of \eqref{1.1} in $\R^6\times(0,T-\delta)$.
We denote this $u(x,t)$ by $u^{(\delta)}(x,t)$.
Then the limiting function
$u(x,t)=\lim_{\delta\to0}u^{(\delta)}(x,t)$
is the desired blowup solution described in Theorem \ref{Thm1}.

\subsection{Energy value}
\label{S8.2}
We compute an exact value of the local energy of the solution constructed above.
The local energy is defined by
 \[
 E_\text{loc}(u)
 =
 \frac{1}{2}
 \int_{|x|<1}|\nabla_xu|dx
 -
 \frac{1}{3}
 \int_{|x|<1}|u|^3dx.
 \]
The solution $u(x,t)$ is written as (see Section \ref{S5.1} and Section \ref{S7.3})
 \begin{align}\label{8.4}
 u(x,t)
 &=
 \frac{1}{\lambda^2}{\sf Q}(y)
 +
 \frac{\sigma}{\lambda_0^2}
 T_1(y)\chi_1
 -
 \Theta(x,t)\chi_2
 +
 \frac{1}{\lambda^2}\epsilon(y,t)\chi_\text{in}
 +
 w(x,t)
 \nonumber
 \\
 &=
 \frac{1}{\lambda^2}{\sf Q}(y)
 +
 \frac{\sigma}{\lambda_0^2}
 T_1(y)\chi_1
 -
 \Theta(x,t)\chi_2
 +
 \frac{1}{\lambda^2}\epsilon(y,t)\chi_\text{in}
 +
 w_1(x,t)\chi_{r_1}+w_2(x,t),
 \end{align}
 where
 $w_i(x,t)=(T-t)^{-1}\varphi_i(z,\tau)$ ($i=1,2$).

\subsubsection{Computation of $\dis\int_{|x|<1}|u|^3dx$}
 We decompose the integral into four parts.
 \begin{align*}
 \Omega_1
 &=
 \{x\in\R^6;\ |y|<(T-t)^{-\frac{1}{4}}\}
 =
 \{x\in\R^6;\ |x|<\lambda(T-t)^{-\frac{1}{4}}\},
 \\
 \Omega_2
 &=
 \{x\in\R^6;\ (T-t)^{-\frac{1}{4}}<|y|,\ |z|<2\tau^{-1}\},
 \\
 \Omega_3
 &=
 \{x\in\R^6;\ 2\tau^{-1}<|z|<\tau^\frac{19}{30}\},
 \\
 \Omega_4
 &=
 \{x\in\R^6;\ \tau^\frac{19}{30}<|z|,\ |x|<1\}.
 \end{align*}
 We first recall the following elementary relation.
 \begin{equation}\label{8.5}
 ||b|^3-|a|^3|\lesssim a^2|b-a|+|b-a|^3
 \qquad\text{for } a,b\in\R.
 \end{equation}
 From \eqref{8.5},
 we see that
 \begin{align*}
 \int_{\Omega_1}
 \left|
 |u|^3-\left( \frac{\sf Q}{\lambda^2} \right)^3
 \right|
 dx
 &<
 \int_{\Omega_1}
 \left( \frac{\sf Q}{\lambda^2} \right)^2
 \left|
 u-\frac{\sf Q}{\lambda^2}
 \right|
 +
 \int_{\Omega_1}
 \left|
 u-\frac{\sf Q}{\lambda^2}
 \right|^3
 dx
 \\
 &=
 \|{\sf Q}\|_3^2
 \left\|
 u-\frac{\sf Q}{\lambda^2}
 \right\|_{L_x^3(\Omega_1)}
 +
 \left\|
 u-\frac{\sf Q}{\lambda^2}
 \right\|_{L_x^3(\Omega_1)}^3.
 \end{align*}
Since $\chi_2=0$ in $\Omega_1$,
from \eqref{5.9} - \eqref{5.10} and \eqref{6.23},
the integral on the right-hand side is computed as
 \begin{align*}
 \int_{\Omega_1}
 \left|
 u-\frac{\sf Q}{\lambda^2}
 \right|^3
 dx
 &=
 \int_{\Omega_1}
 \left|
 \frac{\sigma}{\lambda_0^2}
 T_1(y)\chi_1
 +
 \frac{1}{\lambda^2}\epsilon(y,t)\chi_\text{in}
 +
 w(x,t)
 \right|^3dx
 \\
 &\lesssim
 \int_{\Omega_1}
 \left(
 \frac{1}{(T-t)^3}
 +
 \frac{1}{\tau^\frac{9}{2}(T-t)^3}
 \frac{R^12(\log R)^3}{1+|y|^\frac{33}{2}}
 +
 \frac{1}{\tau^\frac{9}{2}(T-t)^3}
 \right)
 dx
 \\
 &\lesssim
 \frac{1}{(T-t)^3}
 \left( \frac{\lambda}{(T-t)^\frac{1}{4}} \right)^n
 +
 \frac{R^{12}(\log R)^3}{\tau^\frac{9}{2}(T-t)^3}
 \lambda^n
 \\
 &\lesssim
 \tau^{-\frac{45}{4}}(T-t)^3.
 \end{align*}
Therefore it follows that
 \begin{align}\label{8.6}
 \int_{\Omega_1}
 \left|
 |u|^3-\left( \frac{\sf Q}{\lambda^2} \right)^3
 \right|
 dx
 \lesssim
 \tau^{-\frac{15}{4}}(T-t).
 \end{align}
 We next provide an estimate in $\Omega_3$.
 Since $\chi_1=0$ and $\chi_2=1$ in $\Omega_3$,
 we get from \eqref{8.5} that
 \[
 \left| |u|^3-\Theta^3 \right|
 \lesssim
 \Theta^2|u-\Theta|
 +
 |u-\Theta|^3
 \lesssim
 \Theta^2
 \left(
 \frac{\sf Q}{\lambda^2}+|w|
 \right)
 +
 \left( \frac{\sf Q}{\lambda^2} \right)^3
 +
 |w|^3
 \qquad\text{in } \Omega_3.
 \]
 Furthermore
 we note from \eqref{5.9} - \eqref{5.10} that
 \[
 |w(x,t)|\lesssim
 \begin{cases}
 K^4\tau^{-\frac{1}{2}}\Theta(x,t) & \text{for } |z|<K\sqrt\tau,
 \\
 K^\frac{17}{8}\tau^{-\frac{1}{4}}\Theta(x,t)
 & \text{for } K\sqrt\tau<|z|<\tau^\frac{19}{30}.
 \end{cases}
 \]
 Due to this relation and
 \begin{align*}
 \int_{\Omega_3}
 \left( \frac{\sf Q}{\lambda^2} \right)^3
 dx
 <
 \int_{|y|>\frac{2\sqrt{T-t}}{\tau\lambda}}
 {\sf Q}^3
 dy
 \lesssim
 \left( \frac{2\sqrt{T-t}}{\tau\lambda} \right)^{-6}
 \lesssim
 \tau^{-\frac{21}{4}}
 (T-t)^\frac{9}{2},
 \end{align*}
 we get
 \begin{align}\label{8.7}
 \int_{\Omega_3}
 \left| |u|^3-\Theta^3 \right|dx
 &\lesssim
 \int_{\Omega_3}
 \left(
 \Theta^2
 \left(
 \frac{\sf Q}{\lambda^2}+|w|
 \right)
 +
 \left( \frac{\sf Q}{\lambda^2} \right)^3
 +
 |w|^3
 \right)
 dx
 \nonumber
 \\
 &\lesssim
 \int_{\Omega_3}
 \Theta^2\left( \frac{\sf Q}{\lambda^2} \right)
 dx
 +
 \frac{1}{\tau^\frac{1}{4}}
 \int_{\Omega_4}\Theta^3dx 
 +
 \int_{\Omega_3}
 \left( \frac{\sf Q}{\lambda^2} \right)^3
 dx
 +
 \frac{1}{\tau^\frac{3}{4}}
 \int_{\Omega_3}\Theta^3dx 
 \nonumber
 \\
 &\lesssim
 \int_{\Omega_3}
 \left( \tau^{-\frac{1}{4}}\Theta^3
 +
 \tau^\frac{1}{2}
 \left( \frac{\sf Q}{\lambda^2} \right)^3
 \right)
 dx
 +
 \frac{1}{\tau^\frac{1}{4}}
 \int_{\Omega_3}\Theta^3dx
 +
 \int_{\Omega_3}
 \left( \frac{\sf Q}{\lambda^2} \right)^3
 dx
 \nonumber
 \\
 &\lesssim
 \frac{1}{\tau^\frac{1}{4}}
 \int_{\Omega_3}\Theta^3dx
 +
 \tau^{-\frac{19}{4}}
 (T-t)^\frac{9}{2}.
 \end{align}
 Therefore
 from \eqref{8.6} - \eqref{8.7} and
 \begin{align*}
 \int_{\Omega_1}
 \left( \frac{\sf Q}{\lambda^2} \right)^3
 dx
 &=
 \int_{\R^6}
 {\sf Q}^3
 dy
 -
 \int_{|y|>(T-t)^{-\frac{1}{4}}}
 {\sf Q}^3
 dy
 =
 \int_{\R^6}
 {\sf Q}^3
 dy
 +
 O((T-t)^\frac{3}{2}),
 \\
 \int_{\Omega_3}\Theta^3dx
 &=
 \int_{|z|<\tau^\frac{19}{30}}\Theta^3dx
 -
 \int_{|z|<\frac{2}{\tau}}\Theta^3dx
 =
 \int_{|z|<\tau^\frac{19}{30}}\Theta^3dx
 +
 O(\tau^{-6}),
 \end{align*}
 we obtain
 \begin{align}\label{8.8}
 \int_{|x|<1}|u|^3
 dx
 &>
 \int_{\Omega_1}|u|^3
 dx
 +
 \int_{\Omega_3}|u|^3
 dx
 \nonumber
 >
 \int_{\Omega_1}
 \left( \frac{\sf Q}{\lambda^2} \right)^3
 dx
 +
 (1-c\tau^{-\frac{1}{4}}) 
 \int_{\Omega_3}\Theta^3dx
 -
 o_1
 \nonumber
 \\
 &>
 \int_{\R^6}
 {\sf Q}^3
 dy
 +
 (1-c\tau^{-\frac{1}{4}}) 
 \int_{|z|<\tau^\frac{19}{30}}\Theta^3dx
 -
 o_1,
 \end{align}
where $o_1$ is a positive function satisfying $\lim_{t\to T}o_1=0$.

\subsubsection{Computation of $\dis\int_{|x|<1}|\nabla_xu|^2dx$}
We first investigate the behavior of $u(x,t)$ in $|x|>\frac{K}{2}\sqrt\tau\sqrt{T-t}$.
We put
 \[
 \bar u(x,t)
 =
 (T-t)^{-1}
 \left( 1+\frac{\alpha}{2\tau}e_1(z) \right)^{-1},
 \]
 where $\alpha$ is a positive constant defined by \eqref{4.6}.
 We now claim that
 \begin{equation}\label{8.9}
 |u(x,t)|<\bar u(x,t)
 \qquad \text{for } |x|>\frac{K}{2}\sqrt\tau\sqrt{T-t},\quad t\in(0,T).
 \end{equation}
 Since
 $w(x,t)|_{t=0}=w_2(x,t)|_{t=0}=e^\tau\varphi_2(z,\tau_0)$ for $|z|>2r_1$
 and
 $\varphi_2(z,\tau_0)=({\bf d}\cdot{\bf e})\chi_\text{out}=0$
 for $|z|>\frac{K}{2}\sqrt\tau$
 (see \eqref{7.3} for definition of $\chi_\text{out}$),
 we easily see that
 \begin{equation}\label{8.10}
 w(x,t)|_{t=0}=0
 \qquad \text{for } |x|>\frac{K}{2}\sqrt\tau\sqrt{T-t}.
 \end{equation}
Therefore due to \eqref{8.4} and $\Theta(x,t)<\bar{u}(x,t)$,
we get
 \begin{align*}
 |u(x,t)|_{t=0}|
 &=
 \left.
 \left|
 \frac{1}{\lambda^2}{\sf Q}(y)
 \right|_{t=0}
 -
 \Theta|_{t=0}
 \right|
 \\
 &<
 \Theta|_{t=0}
 <
 \bar{u}(x,t)|_{t=0}
 \qquad\text{for } |x|>\frac{K}{2}\sqrt{\tau}\sqrt{T-t},\quad t=0.
 \end{align*}
Furthermore
it holds from \eqref{7.19} that
 \begin{align*}
 |w_2(x,t)|
 &\lesssim
 (T-t)^{-1}\tau^{-\frac{27}{16}}
 |z|^2
 \\
 &\lesssim
 \tau^{-\frac{27}{16}}
 |z|^4
 \frac{1}{|x|^2}
 \lesssim
 K^4\tau^\frac{5}{16}
 \frac{1}{|x|^2}
\qquad\text{for } |x|=\frac{K}{2}\sqrt{\tau}\sqrt{T-t},\quad \tau>\tau_0.
 \end{align*}
From this estimate and
 \begin{align*}
 \Theta(x,t)
 &\approx
 \frac{\tau}{\alpha(T-t)}\frac{1}{e_1(z)}
 \approx
 \frac{\tau}{\alpha{\sf c}_1}\frac{1}{|x|^2}
 \quad\text{for } |z|>\frac{K}{2}\sqrt\tau,
 \\
 \bar{u}(x,t)
 &\approx
 \frac{2\tau}{\alpha(T-t)}\frac{1}{e_1(z)}
 \approx
 \frac{2\tau}{\alpha{\sf c}_1}\frac{1}{|x|^2}
 \quad\text{for } |z|>\frac{K}{2}\sqrt\tau,
 \end{align*}
we obtain
 \begin{align*}
 |u(x,t)|
 &<
 \frac{1}{\lambda^2}{\sf Q}(y) 
 +
 \Theta(x,t)
 +
 |w_2(x,t)|
 <
 \frac{c}{\lambda^2}\frac{1}{|y|^4}
 +
 \Theta(x,t)
 +
 c\tau^\frac{5}{16}\frac{1}{|x|^2}
 \\
 &<
 \bar{u}(x,t)
 \qquad
 \text{for } |x|=\frac{K}{2}\sqrt{\tau}\sqrt{T-t},\quad t\in(0,T).
 \end{align*}
In exactly the same computation as \eqref{5.3},
we can verify that
\begin{equation}\label{8.11}
 \pa_t\bar{u}-\Delta\bar{u}-\bar{u}^2
 =
 \underbrace{
 \frac{\alpha}{2}
 \tau^{-2}
 (T-t)^{-2}
 \left( 1+\frac{\alpha}{2\tau}e_1 \right)^{-2}
 e_1
 -
 \frac{\alpha^2}{2}
 \tau^{-2}
 (T-t)^{-2}
 \left( 1+\frac{\alpha}{2\tau}e_1 \right)^{-3}
 |\nabla_ze_1|^2
 }_{=:\bar\mu(x,t)}.
 \end{equation}
Since
$|\nabla_ze_1|^2=4{\sf c}_1e_1+8n{\sf c}_1^2$ (see \eqref{3.4})
it holds that
 \begin{align}\label{8.12}
 \bar\mu
 &=
 \tau^{-2}(T-t)^{-2}
 \left( 1+\frac{\alpha}{2\tau}e_1 \right)^{-3}
 \left(
 \frac{\alpha}{2}
 e_1
 \left( 1+\frac{\alpha}{2\tau}e_1 \right)
 -
 \frac{\alpha^2}{2}
 \left( 4{\sf c}_1e_1+8n{\sf c}_1^2 \right)
 \right)
 \nonumber
 \\
 &=
 \frac{\alpha}{2\tau^2}
 (T-t)^{-2}
 \left( 1+\frac{\alpha}{2\tau}e_1 \right)^{-3}
 \left(
 e_1
 \left( 1+\frac{\alpha}{2\tau}e_1-4\alpha{\sf c}_1 \right)
 -
 8n\alpha{\sf c}_1^2
 \right).
 \end{align}
Therefore since
$e_1(z)>\frac{{\sf c}_1K^2}{8}\tau$ if $|z|>\frac{K}{2}\sqrt\tau$
and $\alpha>0$,
it follows that
 \[
 \bar\mu>0
 \qquad\text{for } |z|>\frac{K}{2}\sqrt\tau
 \]
if $K$ is large enough.
A comparison argument shows \eqref{8.9}.
We next give an estimate of $|\nabla_xu(x,t)|$ in the same manner.
Let
 \[
 v(r,t)
 =
 \pa_ru(r,t),
 \qquad
 r=|x|.
 \]
The function $v(r,t)$ solves
 \[
 v_t=\pa_r^2v+\frac{n-1}{r}\pa_rv-\frac{n-1}{r^2}v+2|u|v
 \qquad\text{in } \R^6\times(0,T).
 \]
We put
 \[
 \bar{v}(r,t)
 =
 -\pa_r\bar{u}(x,t)
 =
 \alpha{\sf c}_1\tau^{-1}(T-t)^{-\frac{3}{2}}
 \left( 1+\frac{\alpha}{2\tau}e_1(z) \right)^{-2}
 |z|,
 \qquad
 r=|x|.
 \]
By a direct computation,
we verify that
 \begin{align}\label{8.13}
 \begin{array}{cl}
 \dis
 \pa_r\Theta(r,t)
 \approx
 -\frac{2\tau}{\alpha{\sf c}_1}\frac{1}{|x|^3}
 & \dis \text{for } |z|>\frac{K}{2}\sqrt\tau,
 \\[4mm]
 \dis
 \bar{v}(r,t)
 \approx
 \frac{4\tau}{\alpha{\sf c}_1}\frac{1}{|x|^3}
 & \dis \text{for } |z|>\frac{K}{2}\sqrt\tau.
 \end{array}
 \end{align}
From \eqref{8.10} and \eqref{8.13},
we easily see that
 \begin{align*}
 |v(r,t)|_{t=0}|
 &=
 \left|
 \left.
 \pa_r
 \left( \frac{{\sf Q}}{\lambda^2} \right)
 \right|_{t=0}
 -
 \pa_r\Theta(r,t)|_{t=0}
 \right|
 \\
 &<
 \left|
 \pa_r\Theta(r,t)|_{t=0}
 \right|
 <
 \bar{v}(r,t)|_{t=0}
 \qquad\text{for } |x|>\frac{K}{2}\sqrt\tau\sqrt{T-t},\quad t=0.
 \end{align*}
Furthermore \eqref{7.23} implies
 \begin{align*}
 |\pa_rw_2(x,t)|
 &\lesssim
 \tau^{-\frac{27}{16}}(T-t)^{-\frac{3}{2}}|z|^2
 =
 \tau^{-\frac{27}{16}}(T-t)^{-\frac{3}{2}}|z|^2|x|^3\frac{1}{|x|^3}
 \\
 &=
 \frac{K^5}{32}\tau^\frac{13}{16}\frac{1}{|x|^3}
 \lesssim
 \tau^{-\frac{3}{16}\tau}\bar{v}
 \qquad
 \text{for } |x|=\frac{K}{2}\sqrt\tau\sqrt{T-t},\quad t\in(0,T).
 \end{align*}
Therefore
it follows from \eqref{8.13} that
 \begin{align*}
 |v(r,t)|
 &<
 \frac{\lambda^2}{|x|^5}
 +
 |\pa_r\Theta(r,t)|
 +
 |\pa_rw_2(r,t)|
 \\
 &<
 \bar{v}(r,t)
 \qquad\text{for } |x|=\frac{K}{2}\sqrt\tau\sqrt{T-t},\quad t\in(0,T).
 \end{align*}
We differentiate \eqref{8.11} to get an equation for $\bar{v}(r,t)=-\pa_r\bar{u}(r,t)$.
 \begin{equation}\label{8.14}
 \pa_t\bar{v}-\pa_r^2\bar{v}-\frac{n-1}{r}\pa_r\bar{v}+\frac{n-1}{r^2}\bar{v}
 -
 2\bar{u}\bar{v}
 =
 -\pa_r\bar\mu.
 \end{equation}
We now compute $-\pa_r\bar\mu$.
For simplicity,
we write
 \[
 {\sf X}=\frac{\alpha}{2\tau}e_1.
 \]
From \eqref{8.12},
we see that
\begin{align*}
 \pa_r\bar\mu
 &=
 \frac{\alpha}{2\tau^2}(T-t)^{-2}
 \left( 1+\frac{\alpha}{2\tau}e_1 \right)^{-4}
 \pa_re_1
 \left\{
 \left( 1+\frac{\alpha}{\tau}e_1-4\alpha{\sf c}_1 \right)
 \left( 1+\frac{\alpha}{2\tau}e_1 \right)
 \right.
 \\
 &\qquad
 -
 \frac{3\alpha}{2\tau}
 \left.
 \left(
 e_1
 \left( 1+\frac{\alpha}{2\tau}e_1-4\alpha{\sf c}_1 \right)
 -
 8n\alpha{\sf c}_1^2
 \right)
 \right\}
 \\
 &=
 \frac{\alpha}{2\tau^2}(T-t)^{-2}
 \left( 1+\frac{\alpha}{2\tau}e_1 \right)^{-4}
 \pa_re_1
 \\
 &\qquad\times
 \left\{
 \left( 1+2{\sf X}-4\alpha{\sf c}_1 \right)
 \left( 1+{\sf X} \right)
 -
 3
 X
 \left( 1+X-4\alpha{\sf c}_1 \right)
 +
 12n\alpha^2{\sf c}_1^2\tau^{-1}
 \right\}
 \\
 &=
 \frac{\alpha}{2\tau^2}(T-t)^{-2}
 \left( 1+\frac{\alpha}{2\tau}e_1 \right)^{-4}
 \pa_re_1
 \left(
 -{\sf X}^2
 +
 8\alpha c_1{\sf X}
 +
 (1-4\alpha{\sf c}_1)
 +
 12n\alpha^2{\sf c}_1^2\tau^{-1}
 \right).
 \end{align*}
Since $\pa_re_1=2{\sf c}_1(T-t)^{-\frac{1}{2}}|z|>0$ and
${\sf X}=\frac{\alpha}{2\tau}e_1>\frac{\alpha{\sf c}_1}{16}K^2$
if $|z|>\frac{K}{2}\sqrt\tau$,
it holds that
 \[
 \pa_r\bar\mu<0
 \qquad\text{for } |z|>\frac{K}{2}\sqrt\tau
 \]
if $K$ is large enough.
Therefore
from \eqref{8.9} and \eqref{8.14},
we deduce that
 \begin{align*}
 \pa_t\bar{v}-\pa_r^2\bar{v}-\frac{n-1}{r}\pa_r\bar{v}+\frac{n-1}{r^2}\bar{v}
 -
 2|u|\bar{v}
 &>
 \pa_t\bar{v}-\pa_r^2\bar{v}-\frac{n-1}{r}\pa_r\bar{v}+\frac{n-1}{r^2}\bar{v}
 -
 2\bar{u}\bar{v}
 \\
 &=
 -\pa_r\bar\mu>0
 \qquad\text{for } |x|>\frac{K}{2}\sqrt\tau\sqrt{T-t},
 \quad
 t\in(0,T).
 \end{align*}
From a comparison argument,
we finally obtain
 \begin{equation}\label{8.15}
 |\pa_ru(r,t)|=|v(r,t)|<\bar{v}(r,t)
 \qquad\text{for } |x|>\frac{K}{2}\sqrt\tau\sqrt{T-t},\quad t\in(0,T).
 \end{equation}
By using the above estimates,
we now compute $\dis\int_{|x|<1}|\nabla_xu|^2dx$.
Let
 \[
 \Omega_5
 =
 \left\{ x\in\R^6;\ |x|<\frac{K}{2}\sqrt\tau\sqrt{T-t} \right\},
 \qquad
 \Omega_6
 =
 \left\{ x\in\R^6;\ \frac{K}{2}\sqrt\tau\sqrt{T-t}<|x|<1 \right\}.
 \]
From \eqref{8.4},
we see that
 \begin{align}\label{8.16}
 \int_{\Omega_5}
 &
 \left|
 |\nabla_xu|^2
 -
 \left| \nabla_x\left( \frac{\sf Q}{\lambda^2} \right) \right|^2
 -
 |\nabla_x(\Theta\chi_2)|^2
 \right|
 dx
 \\
 &\lesssim
 \int_{\Omega_5}
 \left|
 \nabla_x\left( \frac{\sf Q}{\lambda^2} \right)
 \cdot
 \nabla_x(\Theta\chi_2)
 \right|
 dx
 +
 \int_{\Omega_5}
 \left|
 \nabla_x\left(
 \frac{\sigma}{\lambda_0^2}T_1\chi_1
 +
 \frac{\epsilon\chi_\text{in}}{\lambda^2}
 +
 w
 \right)
 \right|^2
 dx
 \nonumber
 \\
 &\qquad
 +
 \int_{\Omega_5}
 \left|
 \nabla_x\left( \frac{\sf Q}{\lambda^2} \right)
 \cdot
 \nabla_x\left(
 \frac{\sigma}{\lambda_0^2}T_1\chi_1
 +
 \frac{\epsilon\chi_\text{in}}{\lambda^2}
 +
 w
 \right)
 \right|
 dx
 \nonumber
 \\
 &\qquad
 +
 \int_{\Omega_5}
 \left|
 \nabla_x\left( \Theta\chi_2 \right)
 \cdot
 \nabla_x\left(
 \frac{\sigma}{\lambda_0^2}T_1\chi_1
 +
 \frac{\epsilon\chi_\text{in}}{\lambda^2}
 +
 w
 \right)
 \right|
 dx.
 \nonumber
 \end{align}
We here note that
 \begin{equation}\label{8.17}
 |\nabla_x\chi_2|
 =
 (T-t)^{-\frac{1}{2}}|\nabla_z\chi_2|
 \lesssim
 \tau(T-t)^{-\frac{1}{2}}{\bf 1}_{\frac{1}{\tau}<|z|<\frac{2}{\tau}}.
 \end{equation}
From this relation,
we easily see that
$|\nabla_x(\Theta\chi_2)|\lesssim
 \tau^{-\frac{1}{2}}(T-t)^{-\frac{3}{2}}{\bf 1}_{|z|>\frac{1}{\tau}}
 +\tau(T-t)^{-\frac{3}{2}}{\bf 1}_{\frac{1}{\tau}<|z|<\frac{2}{\tau}}$.
Therefore
the first term is estimated as
 \begin{align*}
 \int_{\Omega_5}
 \left|
 \nabla_x\left( \frac{\sf Q}{\lambda^2} \right)
 \cdot
 \nabla_x(\Theta\chi_2)
 \right|
 dx
 &\lesssim
 \left(
 \int_{|z|>\frac{1}{\tau}}
 \left|
 \nabla_x\left( \frac{\sf Q}{\lambda^2} \right)
 \right|^2
 dx
 \right)^\frac{1}{2}
 (T-t)^{-\frac{3}{2}}
 \\
 &\qquad
 \times
 \left(
 \frac{1}{\sqrt\tau}
 \left(
 \int_{\Omega_5}
 dx
 \right)^\frac{1}{2}
 +
 \left(
 \int_{|z|<\frac{2}{\tau}}
 dx
 \right)^\frac{1}{2}
 \right)
 \\
 &\lesssim
 \tau^{-\frac{7}{4}}
 \tau
 (T-t)^\frac{3}{2}
 =
 \tau^{-\frac{3}{4}}
 (T-t)^\frac{3}{2}.
\end{align*}
Here we use
 \[
 \int_{|z|>\frac{1}{\tau}}
 \left|
 \nabla_x\left( \frac{\sf Q}{\lambda^2} \right)
 \right|^2
 dx
 <
 \int_{|y|>\frac{\sqrt{T-t}}{\tau\lambda}}
 |\nabla_y{\sf Q}|^2
 dy
 \lesssim
 \left(
 \frac{\sqrt{T-t}}{\tau\lambda}
 \right)^{-4}
 \lesssim
 \tau^{-\frac{7}{2}}
 (T-t)^3.
 \]
Furthermore
from \eqref{4.2} - \eqref{4.3} and \eqref{6.23},
we verify that
 \begin{align*}
 \int
 \left|
 \nabla_x\left( \frac{\sigma}{\lambda^2}T_1\chi_1 \right)
 \right|^2
 dx
 &<
 \left( \frac{\sigma}{\lambda^2} \right)^2
 \frac{1}{\lambda^2}
 \int_{|z|<\frac{2}{\tau}}
 |\nabla_yT_1|^2
 dx
 +
 \left( \frac{\sigma}{\lambda^2} \right)^2
 \frac{1}{T-t}
 \int
 |\nabla_z\chi_1|^2
 dx 
 \\
 &\lesssim
 \frac{\sigma^2}{\lambda^6}
 \int_{|z|<\frac{2}{\tau}}
 \frac{dx}{|y|^6}
 +
 \frac{\tau^2}{(T-t)^3}
 \int_{|z|<\frac{2}{\tau}}
 dx
 \\
 &\lesssim
 \sigma^2
 \int_{|y|<\frac{2\sqrt{T-t}}{\tau\lambda}}
 \frac{dy}{|y|^6}
 +
 \tau^{-4}
 \lesssim
 \tau^{-4},
 \\
 \int
 \left|
 \nabla_x\left( \frac{\epsilon\chi_\text{in}}{\lambda^2} \right)
 \right|^2
 dx
 &<
 \frac{1}{\lambda^4}
 \frac{1}{\lambda^2}
 \int_{|y|<2R}
 |\nabla_y\epsilon|^2
 dx
 +
 \frac{1}{\lambda^4}
 \frac{1}{\lambda^2}
 \int
 \epsilon^2|\nabla_y\chi_\text{in}|^2
 dx 
 \\
 &\lesssim
 \int_{|y|<2R}
 |\nabla_y\epsilon|^2
 dy
 +
 \frac{1}{R^2}
 \int_{R<|y|<2R}
 \epsilon^2 dy
 \\
 &\lesssim
 \frac{\lambda^4}{\tau^3(T-t)^2}
 R^8(\log R)^2
 \left(
 \int_{|y|<2R}
 \frac{dy}{1+|y|^{13}}
 +
 \frac{1}{R^2}
 \int_{R<|y|<2R}
 \frac{dy}{1+|y|^{11}}
 \right)
 \\
 &\lesssim
 R^8(\log R)^2\tau^{-\frac{21}{2}}(T-t)^3.
 \end{align*}
Since $|\nabla_x\Theta(x,t)|>cK^{-4}\tau^{-1}(T-t)^{-\frac{3}{2}}|z|$ for
$|z|<\frac{K}{2}\sqrt\tau$,
we verify from \eqref{7.23} with $p=8>n$ that
 \begin{align*}
 |\nabla_xw|
 &=
 (T-t)^{-\frac{3}{2}}|\nabla_z\varphi(z,\tau)|
 \\
 &\lesssim
 \frac{(T-t)^{-1}}{R^\frac{1}{8}\lambda\tau^\frac{3}{2}}
 \frac{{\bf 1}_{|z|<1}}{1+|y|^\frac{5}{4}}
 +
 \tau^{-\frac{n}{p}}(T-t)^{-\frac{3}{2}}
 {\bf 1}_{|z|<8}
 +
 \tau^{-\frac{27}{16}}(T-t)^{-\frac{3}{2}}
 |z|^2
 {\bf 1}_{|z|>8}
 \\
 &\lesssim
 \frac{(T-t)^{-1}}{R^\frac{1}{8}\lambda\tau^\frac{3}{2}}
 \frac{{\bf 1}_{|z|<1}}{1+|y|^\frac{5}{4}}
 +
 \tau^{-\frac{3}{4}}(T-t)^{-\frac{3}{2}}
 {\bf 1}_{|z|<8}
 +
 K^5\tau^{-\frac{3}{16}}|\nabla_x\Theta|
 {\bf 1}_{|z|>8}
 \end{align*}
for $|z|<\frac{K}{2}\sqrt\tau$, $\tau>\tau_0$.
From this relation and
 \begin{align*}
 \int_{|z|<1}
 \frac{dx}{1+|y|^\frac{5}{2}}
 &=
 \lambda^n
 \int_{|y|<\frac{\sqrt{T-t}}{\lambda}}
 \frac{dy}{1+|y|^\frac{5}{2}}
 \lesssim
 \lambda^6
 \left( \frac{\sqrt{T-t}}{\lambda} \right)^\frac{7}{2}
 =
 \lambda^\frac{5}{2}
 (T-t)^\frac{7}{4},
 \end{align*}
we get
 \begin{align}\label{8.18}
 \int_{\Omega_5}|\nabla_xw|^2
 dx
 &\lesssim
 \frac{(T-t)^{-2}}{R^\frac{1}{4}\lambda^2\tau^3}
 \int_{|z|<1}
 \frac{dx}{1+|y|^\frac{5}{2}}
 +
 \tau^{-\frac{3}{2}}(T-t)^{-3}
 \int_{|z|<8}
 dx
 +
 \frac{1}{\tau^\frac{3}{8}}
 \int_{\Omega_5}
 |\nabla_x\Theta|^2
 dx
 \nonumber
 \\
 &\lesssim
 \lambda^\frac{1}{2}
 \tau^{-3}(T-t)^{-\frac{1}{4}}
 +
 \tau^{-\frac{3}{2}}
 +
 \frac{1}{\tau^\frac{3}{8}}
 \int_{\Omega_5}
 |\nabla_x\Theta|^2
 dx
 \nonumber
 \\
 &\lesssim
 \tau^{-\frac{3}{2}}
 +
 \frac{1}{\tau^\frac{3}{8}}
 \int_{\Omega_5}
 |\nabla_x\Theta|^2
 dx.
 \end{align}
Furthermore
it holds from \eqref{8.17} that
 \begin{equation}\label{8.19}
 \int_{\Omega_5}
 \left|
 |\nabla_x\Theta|^2
 -
 |\nabla_x(\Theta\chi_2)|^2
 \right|
 dx
 \lesssim
 \int_{|z|<\frac{2}{\tau}}
 \left( |\nabla_x\Theta|^2+\tau|\nabla_x\Theta|\Theta+\tau^2\Theta^2 \right)
 dx
 \lesssim
 \tau^{-4}.
 \end{equation}
Therefore
we get from \eqref{8.18} - \eqref{8.19} that
 \begin{align*}
 \int_{\Omega_5}
 \left|
 \nabla_x(\Theta\chi_2)\cdot\nabla_xw
 \right|
 dx
 &\lesssim
 \left\|
 \nabla_x(\Theta\chi_2)
 \right\|_{L_x^2(\Omega_5)}
 \left\|
 \nabla_xw
 \right\|_{L_x^2(\Omega_5)}
 \\
 &\lesssim
 \tau^{-\frac{3}{4}}
 \left\|
 \nabla_x(\Theta\chi_2)
 \right\|_{L_x^2(\Omega_5)}
 +
 \tau^{-\frac{3}{16}}
 \left\|
 \nabla_x(\Theta\chi_2)
 \right\|_{L_x^2(\Omega_5)}^2.
 \end{align*}
Combining the above estimates,
we obtain
 \begin{align*}
 \text{(right-hand side of \eqref{8.16})}
 &\lesssim
 \int_{\Omega_5}|\nabla_xw|^2dx
 +
 \int_{\Omega_5}|\nabla_x(\Theta\chi_2)\cdot\nabla_xw|dx
 +
 \text{(the rest of terms)} \\
 &\lesssim
 \tau^{-\frac{3}{2}}
 +
 \tau^{-\frac{3}{8}}
 \left\|
 \nabla_x(\Theta\chi_2)
 \right\|_{L_x^2(\Omega_5)}^2
 +
 \tau^{-\frac{3}{4}}
 \left\|
 \nabla_x(\Theta\chi_2)
 \right\|_{L_x^2(\Omega_5)}
 \\
 &\qquad
 +
 \tau^{-\frac{3}{16}}
 \left\|
 \nabla_x(\Theta\chi_2)
 \right\|_{L_x^2(\Omega_5)}^2
 \\
 &\lesssim
 \tau^{-\frac{21}{16}}
 +
 \tau^{-\frac{3}{16}}
 \left\|
 \nabla_x(\Theta\chi_2)
 \right\|_{L_x^2(\Omega_5)}^2
 \\
 &\lesssim
 \tau^{-\frac{21}{16}}
 +
 \tau^{-\frac{3}{16}}
 \left\|
 \nabla_x\Theta
 \right\|_{L_x^2(\Omega_5)}^2.
 \end{align*}
In the last line,
we use \eqref{8.19}.
The integral in $\Omega_6$ is much simpler.
In fact,
due to \eqref{8.15} and $\bar{v}\lesssim|\nabla_x\Theta|$,
it holds that
$|\nabla_xu|\lesssim|\nabla_x\Theta|$ in $\Omega_6$.
Therefore we immediately see that
$\int_{\Omega_6}|\nabla_xu|\lesssim\int_{\Omega_6}|\nabla_x\Theta|$.
As a consequence,
we obtain
 \begin{align}\label{8.20}
 \int_{|x|<1}
 |\nabla_xu|^2
 dx
 &<
 \int_{\Omega_5}
 \left| \nabla_x\left( \frac{{\sf Q}}{\lambda^2} \right) \right|^2
 dx
 +
 c\tau^{-\frac{21}{16}}
 +
 (1+c\tau^{-\frac{3}{16}})
 \int_{\Omega_5}
 |\nabla_x\Theta|^2
 dx
 +
 \int_{\Omega_6}
 |\nabla_xu|^2
 dx
 \nonumber
 \\
 &<
 \|\nabla_y{\sf Q}\|_2^2
 +
 c
 \int_{|x|<1}
 |\nabla_x\Theta|^2
 dx
 +
 c\tau^{-\frac{21}{16}}.
 \end{align}
Combining \eqref{8.8} and \eqref{8.20},
we conclude
 \begin{align*}
 E_\text{loc}(u)
 &=
 \frac{1}{2}
 \int_{|x|<1}|\nabla_xu|^2
 dx
 -
 \frac{1}{3}
 \int_{|x|<1}|u|^3
 dx
 \\
 &<
 E({\sf Q})
 +
 c
 \int_{|x|<1}
 |\nabla_x\Theta|^2
 dx
 -
 \frac{1}{6}
 \int_{|z|<\tau^\frac{19}{30}}
 \Theta^3dx
 +
 o_2,
 \end{align*}
where $o_2$ is a positive function with $\lim_{t\to T}o_2=0$.
Since $e_1(z)={\sf c}_1(|z|^2-2n)$,
a direct computation shows
 \begin{align*}
 \int_{|z|<\tau^\frac{19}{30}}
 \Theta^3dx
 &=
 \int_{|z|<\tau^\frac{19}{30}}
 \left( 1+\frac{\alpha{\sf c}_1}{\tau}(|z|^2-2n) \right)^{-3}
 dz
 >
 \int_{|z|<\tau^\frac{19}{30}}
 \left( 1+\frac{\alpha{\sf c}_1}{\tau}|z|^2 \right)^{-3}
 dz
 \\
 &>
 \tau^3
 \int_{|\xi|<\tau^\frac{4}{30}}
 \left( 1+\alpha{\sf c}_1|\xi|^2 \right)^{-3}
 d\xi
 >
 C_1\tau^3\log\tau
 \qquad
 (C_1>0),
 \\
 \int_{|x|<1}
 |\nabla_x\Theta|^2
 dx
 &\lesssim
 \tau^{-2}
 \int_{|z|<\frac{1}{\sqrt{T-t}}}
 \left( 1+\frac{1}{\tau}|z|^2 \right)^{-4}
 |z|^2
 dz
 \\
 &\lesssim
 \tau^2
 \int_{|\xi|<\frac{1}{\sqrt\tau\sqrt{T-t}}}
 \left( 1+|\xi|^2 \right)^{-4}
 |\xi|^2
 d\xi
 \lesssim
 \tau^2\log\tau.
\end{align*}
We finally obtain
 \[
 E_\text{loc}(u)
 <
 E({\sf Q})
 +
 C_2\tau^2\log\tau
 -
 \frac{C_1}{6}\tau^3\log\tau
 +
 o_2.
 \]
This shows $\lim_{t\to\ T}E_\text{loc}(u)=-\infty$.

\section*{Acknowledgement}
The author is partly supported by
Grant-in-Aid for Young Scientists (B) No. 26800065.


\end{document}